\title{A widely connected topological space made from diamond}
\author{Samuel M. Corson}
\theoremstyle{definition}\newtheorem{theorem}{Theorem}
\theoremstyle{definition}
\numberwithin{theorem}{section}
\theoremstyle{definition}
\theoremstyle{definition}\newtheorem{proposition}[theorem]{Proposition}
\theoremstyle{definition}\newtheorem{definition}[theorem]{Definition}
\theoremstyle{definition}
\theoremstyle{definition}
\theoremstyle{definition}\newtheorem{remark}[theorem]{Remark}
\theoremstyle{definition}
\theoremstyle{definition}\newtheorem{lemma}[theorem]{Lemma}
\theoremstyle{definition}
\theoremstyle{definition}
\theoremstyle{definition}\newtheorem{definitions}[theorem]{Definitions}
\theoremstyle{definition}
\theoremstyle{definition}
\theoremstyle{definition}\newtheorem{construction}[theorem]{Construction}
\newcommand{\B}{\mathbb{B}}
\newcommand{\Rel}{\operatorname{Rel}}
\newcommand{\Log}{\operatorname{Log}}
\newcommand{\X}{\mathcal{X}}
\newcommand{\Coor}{\operatorname{Coor}}
\newcommand{\Conn}{\operatorname{Conn}}
\newcommand{\Bo}{\mathcal{B}}
\newcommand{\dom}{\operatorname{dom}}
\newcommand{\ord}{\operatorname{ord}}
\newcommand{\Sequ}{\operatorname{Sequ}}
\begin{document}

\address{School of Mathematics, University of Bristol, Fry Building, Woodland Road, Bristol, BS8 1UG, United Kingdom.}

\email{sammyc973@gmail.com}
\keywords{connected space, regular space, diamond principle}
\subjclass[2010]{Primary 54G15, 54A35, 54D05; Secondary 03E05, 03E35, 03E50}
\thanks{This work was supported by the Severo Ochoa Programme for Centres of Excellence in R\&D SEV-20150554 and by the Heilbronn Institute for Mathematical Research, Bristol, UK.}

\begin{abstract} We give the construction of an infinite topological space with unusual properties.   The space is regular, separable, and connected, but removing any nonempty open set leaves the remainder of the space totally disconnected (in fact, totally separated).  The space is also strongly Choquet (in fact, satisfies an even stronger condition) and has a basis with nice properties.  The construction utilizes Jensen's diamond principle $\diamondsuit$.
\end{abstract}

\maketitle

\begin{section}{Introduction}

The goal of this paper is to produce an unusual space which, though connected, is only tenuously connected and is quite topologically ``thick''.  We assume some familiarity with topological terminology in this introduction, but many definitions will be provided in Section \ref{Top}.  We adopt the fairly standard convention that regular spaces are also Hausdorff.

A point $x$ in a connected space $X$ is a \emph{dispersion point} if the subspace $X \setminus \{x\}$ is totally disconnected \cite{StSe}.  The famous Knaster-Kuratowski Fan (also known as Cantor's Leaky Tent) is a subspace of the plane which is infinite and connected and contains a dispersion point \cite{KnKu}.  One could imagine a connected space to be quite tenuously connected if it contained many dispersion points.  Unfortunately a connected space with at least three points can have at most one dispersion point.

A reasonable alternative to having many dispersion points in a connected space $X$ is to have the removal of any \emph{neighborhood} $O$ of any point ensure that the subspace $X \setminus O$ is totally disconnected (such spaces are called \emph{widely connected} in the literature \cite{Sw}).  An easy example of such a space is an infinite set with the cofinite topology.  Unfortunately this example does not satisfy very impressive separation axioms; it is $T_1$ but not Hausdorff.  Far more interesting widely connected spaces are known to exist (e.g. \cite{Gru} \cite{Lip}, \cite{Sw}).

We produce a space which is widely connected (in fact satisfying the stronger conditions (3) and (5) below), regular, separable and has a basis with quite nice properties.  We use Jensen's $\diamondsuit$ principle (see Section \ref{modelth}) in the construction in addition to the standard Zermelo-Fraenkel axioms of set theory with the axiom of choice (ZFC).  As ZFC + $\diamondsuit$ is consistent if and only if ZFC is consistent \cite{Jen}, the existence of our space is consistent with ZFC.

\begin{theorem}(ZFC + $\diamondsuit$) \label{main}  There exists a topological space $(X, \tau)$, having a basis $\Bo$, which is

\begin{enumerate}

\item[(1)] regular;

\item[(2)] separable;

\item[(3)] connected;

\item[(4)] of cardinality $\aleph_1$;

\end{enumerate}

\noindent and

\begin{enumerate}

\item[(5)] for every nonempty $O \in \tau$ the subspace $X \setminus O$ is totally separated (hence totally disconnected);

\item[(6)] every nonempty $O \in \tau$ is uncountable;

\item[(7)] $|\mathcal{B}| = \aleph_1$;

\item[(8)] every countable cover of $X$ by elements of $\Bo$ has a finite subcover; and

\item[(9)] every nesting decreasing countable sequence $\{B_n\}_{n\in \omega}$ of basis elements has $\bigcap_{n\in \omega} B_n \neq \emptyset$ (hence $X$ is strongly Choquet).

\end{enumerate}
\end{theorem}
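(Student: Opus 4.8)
The plan is to build $(X,\tau)$ by transfinite recursion of length $\omega_1$, using the diamond sequence to anticipate and defeat all potential ``bad'' configurations. The underlying set will be $X = \omega_1$ (or a fixed set of cardinality $\aleph_1$), and I will construct an increasing chain of partial data $\langle X_\alpha, \mathcal{B}_\alpha\rangle_{\alpha < \omega_1}$, where at stage $\alpha$ we have committed to a countable set $X_\alpha \subseteq \omega_1$ and a countable family $\mathcal{B}_\alpha$ of subsets of $X_\alpha$ that will generate the eventual topology. The final basis is $\mathcal{B} = \bigcup_{\alpha < \omega_1} \mathcal{B}_\alpha$ (suitably interpreted so that each $B \in \mathcal{B}_\alpha$ grows as $\alpha$ increases, i.e.\ one really tracks basis elements together with their ``traces'' on each $X_\alpha$), and $\tau$ is the topology generated by $\mathcal{B}$. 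Properties (2), (4), (7) will be built in directly: separability because some fixed countable $X_0$ is arranged to be dense; cardinality $\aleph_1$ and $|\mathcal{B}| = \aleph_1$ by bookkeeping. Property (6) (every nonempty open set uncountable) and property (9) (countable nested intersections of basis elements nonempty) should be maintained as invariants of the recursion: whenever a new basis element is introduced it is given uncountably many points, and whenever we would otherwise let a descending sequence shrink to $\emptyset$ we instead reserve a point to sit in the intersection.

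**Using diamond to kill disconnections off open sets**

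The heart of the argument, and the main obstacle, is reconciling (3) connectedness with (5): $X$ itself must be connected, yet removing any nonempty open set must leave a totally separated remainder. For (5), one uses $\diamondsuit$ to guess, at stage $\alpha$, a candidate separation: a pair of points $x, y \in X_\alpha$ together with a candidate clopen-in-$X\setminus O$ partition, encoded via the countably many basis elements available so far. The diamond sequence on $\omega_1$ lets us assume that for any genuine eventual configuration $(O, x, y, \text{separation})$, the set of $\alpha$ at which $X_\alpha$ and $\mathcal{B}_\alpha$ correctly reflect that configuration is stationary. At such a stage we act: if $x, y$ lie in the same would-be clopen piece of $X \setminus O$, we do nothing; if they are in different pieces, we must be in the case where $O = \emptyset$ is impossible (that is the connectedness requirement), so some nonempty open set is being removed, and we then add a new point $z$ to $X_{\alpha+1}$ together with neighborhoods forcing $z$ to be a limit of both pieces and lying outside every basis element witnessing the attempted separation — thereby destroying that particular clopen partition of $X \setminus O$. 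Simultaneously, to guarantee (3), we must ensure $X$ has no clopen partition at all: this is handled symmetrically, using the diamond guesses with $O = \emptyset$, by adjoining connecting points. The delicate point is that adding connecting points must not retroactively create a separation of some $X \setminus O'$ that was ``finished'' at an earlier stage — this is why the separating partitions in (5) are taken to be total separations (witnessed by a clopen set in the subspace through a single basis element of the whole space), so that only countably many potential witnesses exist for each relevant configuration and a single diamond stage suffices to handle each one permanently.

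**Verifying the remaining properties and assembling the recursion**

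Once the recursion is specified, the verification proceeds property by property. Regularity (1) is ensured by closing off, at each stage, under a ``shrinking'' operation: for each $B \in \mathcal{B}_\alpha$ and each $x \in B$ we put into $\mathcal{B}_{\alpha+1}$ a smaller basis element $B'$ with $x \in B' \subseteq \overline{B'} \subseteq B$ (computed relative to the finite/countable data at that stage, then preserved), so in the limit every point has a neighborhood base of sets whose closures are contained in any given neighborhood; combined with Hausdorffness (arranged by always separating two given points of $X_\alpha$ by disjoint basis elements) this gives regularity. Property (8) (every countable cover by basis elements has a finite subcover) is a compactness-type condition and is forced by a further bookkeeping task: at stage $\alpha$ the diamond guess may present a countable subfamily $\{B_n\}_{n\in\omega} \subseteq \mathcal{B}_\alpha$ purporting to cover $X$; if no finite subfamily covers $X_\alpha$, we reserve a point $p \in X_{\alpha+1}\setminus X_\alpha$ and forbid $p$ from entering any $B_n$, so that $\{B_n\}$ fails to cover $X$ in the end — hence the only countable covers that survive are those with a finite subcover. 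Property (9) is the analogous reservation for descending sequences, as noted above, and it immediately yields strong Choquetness by the standard characterization. The recursion is then assembled: at each successor stage we perform finitely many of the above tasks, dovetailed so that every task (Hausdorff separation, regularity shrinking, the $\diamondsuit$-driven killing of separations, the cover task, the nested-intersection task) is addressed cofinally often; at limit stages we take unions. The principal technical burden is checking that these tasks do not conflict — in particular that the points reserved for (6), (8), (9) can always be chosen consistently with the neighborhood constraints already imposed, which works because at each stage only countably many constraints are active while $\omega_1 \setminus X_\alpha$ supplies uncountably many fresh points; and that the connecting points added for (3) and (5) never resurrect a separation that a previous stage killed, which is exactly what the $\diamondsuit$-reflection and the single-basis-element encoding of total separations are designed to prevent.
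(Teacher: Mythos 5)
Your outline has the right large-scale shape --- a length-$\omega_1$ recursion with countable approximations, $\diamondsuit$ guessing potential disconnections at stationarily many stages, and bookkeeping tasks for the remaining properties --- and this is indeed how the paper proceeds. But the proposal is missing the central device that makes every one of your ``reserve a point'' steps legitimate, and without it the argument does not close. Each of your key moves --- adding a connecting point $z$ that is ``forced to be a limit of both pieces'' for (3), reserving a point outside every $B_n$ of a guessed countable cover for (8), reserving a point inside a descending intersection for (9) --- requires certifying that such a point is \emph{consistent} with the uncountably many commitments already made about how basis elements intersect, cover, and separate (in particular with the covering commitments needed for (5), which force every point into at least one of three specified basis elements, and with the disjointness commitments needed for Hausdorffness). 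You acknowledge the conflict problem only in passing (``only countably many constraints are active''), but that is not enough: the question is not whether there is room for a new point, it is whether the Boolean combination of memberships you want to assign to it is realizable in \emph{some} model of all the committed constraints. The paper resolves this by running, in parallel with the concrete space, an abstract ``universal'' model $\X_{\beta}\subseteq\{0,1\}^{I_{\Rel,\beta}}$ of the first-order theory $\Gamma_{\beta}$ of the basis, proving by induction that $\X_{\beta}$ is nonempty, compact, and \emph{connected} (Lemmas \ref{compactsubspace} and \ref{compactconnected}, which in turn require the four carefully designed sentence types (a)--(d) and the connectedness-preservation Lemmas \ref{makingHausdorff}--\ref{makingregularity}), and realizing points of $\X_{\beta}$ as actual points of the space (Lemma \ref{thickness}). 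Connectedness of $\X_t$ is precisely what guarantees your connecting point exists (via Lemma \ref{limitpoint}); compactness of $\X_{\beta}$ is what gives (8) and (9) directly, with no guessing or reservation needed at all. Nothing in your proposal plays this role.

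Two further points where the logic is inverted or underspecified. First, (5) is not achieved by $\diamondsuit$-guessing and killing separations of $X\setminus O$; it is a \emph{positive} construction task: for every basis element $B_{\alpha}$ and every pair $x,y\notin B_{\alpha}$ one must \emph{create} basis elements $B_{\alpha_0}\ni x$, $B_{\alpha_1}\ni y$ with $B_{\alpha_0}\cap B_{\alpha_1}\subseteq B_{\alpha}$ and $B_{\alpha_0}\cup B_{\alpha_1}\cup B_{\alpha}=X$, handled by plain enumeration (the paper's Case C); $\diamondsuit$ is used only against global disconnections for (3). Your discussion conflates the two, and your sentence about ``the case where $O=\emptyset$ is impossible'' does not parse into a correct case division. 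Second, the worry you correctly raise --- that connecting points added for (3) might violate commitments made for (5) or (1) --- is resolved in the paper not by the ``single-basis-element encoding'' you invoke but by an explicit invariant (condition (vi)): the two sequences converging to a connecting point are eventually contained in every basis element containing that point, and all later assignments of the connecting point to basis elements respect this. You would need to formulate and maintain such an invariant; as written, your construction gives no reason why the connecting point's neighborhoods continue to meet both pieces of the guessed disconnection after later stages enlarge the basis.
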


We note that this result is reasonably sharp.  A space satisfying (1)-(9) above cannot have any of the following additional properties:

\begin{itemize}

\item metrizable

\item second countable

\item Lindel\"of

\item there exists a nonempty open set with compact closure
\end{itemize}

\noindent (see Proposition \ref{badprop}).  One also cannot strengthen (5) by replacing it with

\begin{enumerate}
\item[(5')] for each nonempty open $O \subseteq X$ the subspace $X \setminus O$ is zero-dimensional
\end{enumerate}

\noindent (see Proposition \ref{notzerodim}).  Thus the space is topologically small (8) and narrow (2), but no space satisfying (1) - (9) can be so small as to be Lindel\"of, or so narrow as to be second countable.  The topology is reasonably fine (1), but no space satisfying (1) - (9) can be metrizable.  The space is tenuously connected ((3) and (5)).  Moreover the disconnectedness occasioned by removing any region is about as strong as allowable in light of (1).  Despite the tenuous connectedness of the space, it is topologically thick (9).

One also cannot replace (4) with ``of cardinality $\aleph_0$'' since regular connected spaces with at least two points must be of cardinality at least $\aleph_1$, but interestingly one can produce (from ZFC) regular separable connected spaces of cardinality exactly $\aleph_1$ \cite{CiWo}.  Any (strongly) Choquet space having at least two points and satisfying (1) and (3) is necessarily of cardinality $\geq 2^{\aleph_0}$ (see Proposition \ref{cardinalityChoquet}), and this together with (4) highlights the well known fact that $\diamondsuit$ implies $\aleph_1 = 2^{\aleph_0}$ (see Remark \ref{CH}).

It may be that one can strengthen (1) by replacing ``regular'' by ``completely regular'' or ``normal.''  A completely regular connected space with at least two points must have cardinality at least $2^{\aleph_0}$, but since $\diamondsuit$ impies $\aleph_1 = 2^{\aleph_0}$ this strengthening might be possible under our set theoretic assumptions.  Another potential strengthening would be to replace (8) with ``$X$ is countably compact.''  One could also try producing the space of Theorem \ref{main}, with all instances of $\aleph_1$ replaced with $2^{\aleph_0}$, from ZFC alone.

If one removes the conditions on the basis then some extra properties are allowed.  Separable metrizable spaces satisfying (3) and (5)  have been obtained classically \cite{Sw}.  More recently, widely connected Polish spaces have been produced \cite{Lip}.  From Martin's Axiom one has spaces which are completely regular, or countable and Hausdorff, and satisfy (3) and (5) (in fact, the connected subsets are precisely the cofinite ones), and from the stronger assumption $\aleph_1 = 2^{\aleph_0}$ such a space can be perfectly normal \cite{Gru}.

Our space is constructed by induction over $\aleph_1$.  At certain steps of the induction we begin the construction of a new basis element, making committments as to how the basis elements interact with each other.  New points are assigned to the increasingly-defined basis elements, but we will not have finished the construction of any basis element until the very end of the the induction.  Connectedness is assured by detecting potential disconnections (using $\diamondsuit$) and blocking them by determining that certain sequences of points will converge.  The ability to block the detected disconnections will be assured by model-theoretic techniques.

We provide some topological preliminaries in Section \ref{Top}, and some model-theoretic preliminaries in Section \ref{modelth}. The space is constructed in Section \ref{construction} and the various properties claimed in Theorem \ref{main} are verified in Section \ref{verification}.
\end{section}

\begin{section}{Topology preliminaries}\label{Top}

We will review some very basic topology terminology and prove some easy preparatory results.  We will let $\omega$ denote the set of natural numbers and adopt the standard convention that an ordinal number is the set of ordinal numbers which are strictly smaller than itself (e.g. $3 = \{0, 1, 2\}$ and $\omega + 1 = \{0, 1, \ldots, \omega\}$).  If $\alpha$ is a successor ordinal we let $\alpha - 1$ denote its predecessor.  Throughout this paper, when we have an enumeration of a set (i.e. an indexing of its elements by a collection of ordinals) we will not be generally requiring that the indexing is injective, although often the indexing will be so.

\begin{definitions}  Recall that a space $X$ is \emph{Hausdorff} if for distinct $x_0, x_1 \in X$ there exist disjoint open sets $U_0, U_1\subseteq X$ such that $x_i \in U_i$.  We say $X$ is \emph{regular} if it is Hausdorff and for any closed subset $C \subseteq X$ and $x\in X \setminus C$ there exist disjoint open sets $U_x$ and $U_C$ such that $x\in U_x$ and $C \subseteq U_C$.  A space is \emph{separable} if it includes a countable dense subset, and Lindel\"of if every open cover has a countable subcover.
\end{definitions}

\begin{definition}  If $X$ is a topological space and $\mathcal{B}$ is a subset of $X$, we say that $\mathcal{B}$ is a \emph{basis} for the topology on $X$ if all elements of $\Bo$ are open and each open set in $X$ is a (possibly empty) union of elements of $\mathcal{B}$.
\end{definition}

It is a standard exercise to show that if $\mathcal{B}$ is a collection of subsets of a set $X$ such that $\bigcup \mathcal{B} = X$, and for any $B_0, B_1 \in \mathcal{B}$ and $x\in B_0 \cap B_1$ there exists $B_2 \in \mathcal{B}$ such that $x\in B_2 \subseteq B_0 \cap B_1$, then the collection of unions of elements in $\mathcal{B}$ is a topology on $X$ with basis $\mathcal{B}$.  A space is \emph{second countable} provided there exists a countable basis for the topology.

We now give some relevant definitions with regard to (dis)connectedness.

\begin{definitions}  Let $X$ be a topological space.  A writing $X = V_0 \sqcup V_1$ of $X$ as a disjoint union of open sets $V_0, V_1 \subseteq X$ is a \emph{disconnection}.  We call a disconnection \emph{trivial} is at least one of the $V_i$ is empty.  The space $X$ is \emph{connected} if each disconnection of $X$ is trivial.  A \emph{component} of a space is a maximal connected subspace.  Components are necessarily closed.  We say $X$ is \emph{totally disconnected} if all components are of cardinality at most $1$, and $X$ is \emph{totally separated} if for any distinct $x, y\in X$ there exists a disconnection $X = V_0 \sqcup V_1$ such that $x \in V_0$ and $y \in V_1$.  Also, $X$ is \emph{zero-dimensional} if there exists a basis for $X$ consisting of clopen sets.  Clearly zero-dimensional implies totally separated implies totally disconnected.
\end{definitions}

Given a subset $Y$ of a topological space $X$ we'll use $\overline{Y}$ to denote the closure of $Y$ in $X$ and $\partial Y$ to denote the boundary $\overline{Y} \cap \overline{(X \setminus Y)}$ of $Y$.

\begin{lemma}\label{limitpoint}  Suppose $X$ is a connected topological space and $X_0, X_1 \subseteq X$ are nonempty disjoint open subsets such that $X_0 \cup X_1$ is dense in $X$.  Then there exists a point $x \in (\overline{X_0}\cup \overline{X_1}) \setminus (X_0 \cup X_1)$.
\end{lemma}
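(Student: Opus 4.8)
The plan is to argue by contradiction: assume no such point $x$ exists and derive a nontrivial disconnection of $X$. So suppose $(\overline{X_0} \cup \overline{X_1}) \setminus (X_0 \cup X_1) = \emptyset$, i.e. $\overline{X_0} \cup \overline{X_1} \subseteq X_0 \cup X_1$.

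First I would use the density hypothesis together with the fact that closure commutes with finite unions: since $X_0 \cup X_1$ is dense, $X = \overline{X_0 \cup X_1} = \overline{X_0} \cup \overline{X_1}$. Combining this equality with the assumed inclusion $\overline{X_0} \cup \overline{X_1} \subseteq X_0 \cup X_1$ gives $X = X_0 \cup X_1$.

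Now $X = X_0 \sqcup X_1$ is a writing of $X$ as a disjoint union of open sets, i.e. a disconnection. Since both $X_0$ and $X_1$ are nonempty by hypothesis, this disconnection is nontrivial, contradicting the connectedness of $X$. Hence the assumed situation is impossible and a point $x \in (\overline{X_0} \cup \overline{X_1}) \setminus (X_0 \cup X_1)$ must exist.

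There is no real obstacle here; the only point requiring a moment's care is the identity $\overline{X_0 \cup X_1} = \overline{X_0} \cup \overline{X_1}$, which holds because closure distributes over finite unions, and the observation that the disjointness of $X_0$ and $X_1$ is exactly what makes $X = X_0 \cup X_1$ a genuine disconnection rather than merely a cover.
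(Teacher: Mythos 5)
Your proof is correct as a proof of the literal statement, and it is arguably the most economical route to it: negating the conclusion, using $X=\overline{X_0\cup X_1}=\overline{X_0}\cup\overline{X_1}$ to get $X=X_0\cup X_1$, and reading off the nontrivial disconnection $X=X_0\sqcup X_1$. The paper argues differently: it first shows from connectedness that $\overline{X_0}\cap\overline{X_1}\neq\emptyset$ (otherwise $X=\overline{X_0}\sqcup\overline{X_1}$ would itself be a nontrivial disconnection, the two closed pieces being complementary and hence open), and then observes that any $x\in\overline{X_0}\cap\overline{X_1}$ automatically lies outside $X_0\cup X_1$, since each open $X_i$ is disjoint from $\overline{X_{1-i}}$. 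The difference is not cosmetic: the paper's argument produces a point that is simultaneously a limit point of $X_0$ and of $X_1$, and it is this stronger form that is actually invoked later in the construction (Subcase F.1 selects $\sigma\in\X_t$ that is a limit point of both $\bigcup_{\alpha\in L_0}b_{\alpha}^t$ and $\bigcup_{\alpha\in L_1}b_{\alpha}^t$). Your contradiction argument only certifies that some point of $\overline{X_0}\cup\overline{X_1}$ escapes $X_0\cup X_1$, without locating it in $\overline{X_0}\cap\overline{X_1}$, so as written it would not support that later application without supplementing it by essentially the paper's direct argument.
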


\begin{proof}  Since $X = \overline{X_0 \cup X_1} = \overline{X_0} \cup \overline{X_1}$ we must have $\overline{X_0}\cap \overline{X_1}\neq \emptyset$, else $X =  \overline{X_0} \sqcup \overline{X_1}$ is a nontrivial disconnection.  Since $X_0$ is open and $X_0 \cap X_1 = \emptyset$ we know that $X_0 \cap \overline{X_1} = \emptyset$, and similarly $X_1 \cap \overline{X_0} = \emptyset$.  Selecting $x\in \overline{X_0}\cap \overline{X_1}$ we therefore have $x\in  (\overline{X_0}\cup \overline{X_1}) \setminus (X_0 \cup X_1)$.
\end{proof}

Next we'll provide some justification for why the properties of the space claimed in Theorem \ref{main} are fairly sharp.

\begin{proposition}\label{badprop}  If a space $(X, \tau)$ with basis $\Bo$ satisfies the properties (1) - (9) of Theorem \ref{main} then it cannot satisfy any of the following additional properties:

\begin{enumerate}

\item[(a)] metrizable;

\item[(b)] second countable;

\item[(c)] Lindel\"of;

\item[(d)] compact;

\item[(e)] there exists a nonempty open set with compact closure.
\end{enumerate}
\end{proposition}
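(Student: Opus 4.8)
The plan is to derive a contradiction from each of (a)--(e) using the listed properties, exploiting chiefly the interaction between property (5) (removing any nonempty open set leaves a totally separated remainder) and connectedness (3), together with the cardinality/basis conditions (4), (6), (7). For (c) Lindel\"of: a regular Lindel\"of space is normal and (more to the point) a Lindel\"of space in which every point has an uncountable neighborhood clashes with... actually the cleanest route is via (8). Observe that (8) says every countable cover by basis elements has a finite subcover; so if $X$ were Lindel\"of, then since $\Bo$ is a basis every open cover refines to a countable subcover by basis elements, which by (8) has a finite subcover, making $X$ compact. Thus (c) reduces to (d), and (d) reduces to (e) trivially (take $O = X$). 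So the work concentrates on (a), (b), and (e).

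For (e): suppose $O$ is nonempty open with $\overline{O}$ compact. Pick any nonempty open $U$ with $\overline{U}\subseteq O$ (possible by regularity), so $\overline{U}$ is compact. Now $X\setminus U$ is totally separated by (5), but I want a contradiction with connectedness of $X$. The idea is that a compact neighborhood gives a clopen-type separation: since $X$ is regular and $\overline{U}$ is compact, the frontier $\partial U$ is compact and disjoint from $U$; use total separation of the subspace $X\setminus U$ together with compactness of $\partial U\subseteq X\setminus U$ to cover $\partial U$ by finitely many relatively clopen pieces and thereby produce a clopen proper nonempty subset of $X$ (roughly, separate $\partial U$ from a chosen point of $X\setminus \overline{U}$, which is nonempty by (6) since $X$ itself is uncountable while compact regular pieces here cannot exhaust it — or more directly, take two points of $X\setminus U$ and run Lemma~\ref{limitpoint}-style reasoning). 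The careful version: total separation plus compactness of $\partial U$ yields, for a point $p\notin \overline U$, a clopen (in $X\setminus U$) set $K$ with $\partial U\subseteq K$, $p\notin K$; then $U\cup K$ and its complement disconnect $X$ nontrivially because $\partial U\subseteq K$ makes $U\cup K$ open and $K$ clopen away from $U$ makes the complement open — contradicting (3).

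For (b) second countable and (a) metrizable: a second countable space is Lindel\"of, so (b) follows from (c); and a metrizable space is regular, so if it also satisfied (1)--(9) then by (8) any open cover has, after refining to a countable basic subcover (metrizable $\Rightarrow$ second countable is false in general, but metrizable $+$ separable $\Rightarrow$ second countable), a finite subcover, again forcing compactness and hence (e). So (a) reduces to (b) via separability (property (2)), and (b) reduces to (c), and (c) to (d) to (e). Hence the entire proposition collapses to the single genuine argument ruling out (e).

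I expect the main obstacle to be the argument for (e) — specifically, manufacturing a genuine nontrivial disconnection of $X$ from a compact neighborhood. The subtlety is that total separation of $X\setminus U$ only gives separations of pairs of points, and one must upgrade this, via compactness of the frontier $\partial U$, to a single relatively clopen set swallowing $\partial U$ but missing some chosen point; then one has to check that adjoining $U$ keeps the set open and its complement open in $X$ (not merely in $X\setminus U$), using that $\partial U$ is contained in the clopen piece so no boundary points leak. Verifying that $X\setminus \overline{U}$ is nonempty (so that a separating point exists) is where (6) and (4) enter: $X$ is uncountable, $\overline{U}$ compact; if $\overline U = X$ then $X$ is compact, which is case (d)/(e) already handled, so we may assume $X\setminus\overline U\neq\emptyset$. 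Assembling these pieces cleanly is the crux; the rest is bookkeeping of implications among (a)--(e).
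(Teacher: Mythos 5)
Your proposal follows essentially the same route as the paper: reduce (a) to (b) via separability, (b) to (c), (c) to (d) via property (8), (d) to (e), and then refute (e) by combining total separation of the complement of an open set (property (5)) with compactness of the boundary to manufacture a nontrivial disconnection; your version of the final step (covering $\partial U$ by finitely many relatively clopen pieces missing an exterior point $p$, then checking that $U\cup K$ and its complement are both open because $\partial U\subseteq K$) is sound. The only slip is the justification that $X\setminus\overline{U}\neq\emptyset$: declaring the case $\overline{U}=X$ ``already handled'' by (d)/(e) is circular, since (d) was reduced \emph{to} (e) rather than disposed of independently, and (4)/(6) do not help since compact sets can be uncountable; the correct fix, which the paper uses, is that in a compact Hausdorff space with at least two points one can always pass to a nonempty open set whose closure is a proper (compact) subset.
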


\begin{proof}  The fact that (a) $\wedge$ (2) $\Rightarrow$ (b), (b) $\Rightarrow$ (c), (c) $\wedge$ (8) $\Rightarrow$ (d), and (d) $\Rightarrow$ (e) is fairly routine.  We'll derive a contradiction from (e) $\wedge$ (1) $\wedge$ (3) $\wedge$ (4) $\wedge$ (5) (in fact we will not use (1) but rather the weaker Hausdorff condition).  Suppose that $X$ is Hausdorff, connected, satisfies (5), has at least two points, and $O \subseteq X$ is a nonempty open set such that $\overline{O}$ is compact.  We can assume without loss of generality that $\overline{O}$ is a proper subset of $X$, since $X$ is Hausdorff and has at least two points.  As $X$ is connected, we have $\partial \overline{O} \neq \emptyset$.  Fix $x\in O$.  As $\overline{O}$ is totally separated, there exists for each $y\in \partial \overline{O}$ a pair of open sets $(U_{x, y}, U_{y, x})$ in $X$ such that $U_{x, y} \cap U_{y, x} \subseteq X \setminus \overline{O}$ and $x\in U_{x, y}$ and $y\in U_{y, x}$.  As $\partial \overline{O}$ is compact, there exists a finite subset $\{U_{y_0, x}, \ldots, U_{y_k, x}\}$ such that $\bigcup_{i = 0}^{k} U_{y_i, x} \supseteq \partial \overline{O}$.  Letting $U_0 = O \cap \bigcap_{i = 0}^k U_{x, y_i}$ and $U_1 = \bigcup_{i = 0}^k U_{y_i, x}$ we obtain a nontrivial disconnection $X = U_0 \sqcup ((X \setminus \overline{O}) \cup U_1)$, contradicting (3).
\end{proof}

\begin{proposition}\label{notzerodim}  If $X$ is a connected space and $U_0$ and $U_1$ are nonempty disjoint open sets  then $X \setminus U_0$ cannot be zero-dimensional.
\end{proposition}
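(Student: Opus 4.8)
The plan is to argue that zero-dimensionality of $X \setminus U_0$ produces a small clopen piece inside $U_1$, that this piece is in fact clopen in all of $X$, and that connectedness of $X$ then forces $U_0 = \emptyset$.

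First I would fix a point $x \in U_1$, which is possible since $U_1 \neq \emptyset$. Because $U_0 \cap U_1 = \emptyset$ we have $U_1 \subseteq X \setminus U_0$, so $U_1$ is an open neighborhood of $x$ in the subspace $X \setminus U_0$. Since $X \setminus U_0$ is zero-dimensional it has a basis of clopen sets, so I may choose a set $C$ that is clopen in $X \setminus U_0$ with $x \in C \subseteq U_1$. Next I would check that $C$ is closed in $X$: as $U_0$ is open, $X \setminus U_0$ is closed in $X$, and a closed subset of a closed subspace is closed in the ambient space, so $C$ is closed in $X$. Then I would check that $C$ is open in $X$: writing $C = V \cap (X \setminus U_0)$ for some $V$ open in $X$ (such $V$ exists because $C$ is open in the subspace), and using $C \subseteq U_1 \subseteq X \setminus U_0$, we get $C = C \cap U_1 = V \cap (X \setminus U_0) \cap U_1 = V \cap U_1$, an intersection of two sets open in $X$, hence open in $X$.

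Finally, $C$ is a nonempty subset of $X$ that is both open and closed, so connectedness of $X$ gives $C = X$. But $C \subseteq U_1$, so $X = U_1$, and then $U_0 = U_0 \cap X = U_0 \cap U_1 = \emptyset$, contradicting the assumption that $U_0$ is nonempty.

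The only step requiring any care — and it is mild — is the verification that $C$ is open in $X$: one must exploit the fact that $C$ lies inside the \emph{open} set $U_1$, rather than merely inside the closed set $X \setminus U_0$, in order to promote relative openness in $X \setminus U_0$ to genuine openness in $X$. Everything else is immediate from the definitions.
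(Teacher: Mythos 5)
Your proof is correct and follows essentially the same route as the paper's: both extract a nonempty set that is clopen in $X \setminus U_0$ and contained in $U_1$, promote its relative openness to openness in $X$ using $C \subseteq U_1$ with $U_1$ open, get closedness from closed-in-closed, and then contradict connectedness. The only cosmetic difference is that the paper writes all of $U_1$ as a union of such clopen pieces and picks a nonempty one, while you pick a clopen basic neighborhood of a single point; the arguments are otherwise identical.
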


\begin{proof}  Suppose otherwise for contradiction.  Then we may write $U_1$ as a union $U_1 = \bigcup_{i\in I}O_i$ where each $O_i$ is clopen in $X \setminus U_0$.  Since $U_1 \neq \emptyset$ we select $i \in I$ for which $O_i \neq \emptyset$.  As $O_i \subseteq U_1$ is open in $X \setminus U_0$, it is also open in $U_1$, and therefore open in $X$.  As $\overline{O_i} \subseteq \overline{U_1} \subseteq X \setminus U_0$ and $O_i$ is closed in $X \setminus U_0$, we know $O_i$ is closed in $X$.  Thus $\emptyset \neq O_i \subseteq X\setminus U_0 \neq X$ is clopen, contradicting the connectedness of $X$.
\end{proof}

\begin{definitions} (see \cite[Chapter 8]{Ke}) Recall that a space is \emph{Baire} if any countable intersection of open dense sets is dense.  Given a space $X$, the \emph{strong Choquet game} is an infinitary game between a Player I and a Player II, a run of which we describe.  Player I chooses a point and neighborhood $(x_0, U_0)$, then Player II chooses an open neighborhood $x_0 \in V_0 \subseteq U_0$, Player I chooses a point and neighborhood $(x_1, U_1)$ such that $U_1 \subseteq V_1$, Player II chooses a neighborhood $x_1 \in V_1 \subseteq U_1$, etc.  Player I wins the run if $\bigcap_{n\in \omega} U_n = \emptyset$ and otherwise Player II wins.  The Choquet game is similar, but the players are only selecting nonempty open sets so that $U_0 \supseteq V_1 \supseteq U_1 \supseteq \cdots$, and again Player I wins if $\bigcap_{n\in \omega} U_n = \emptyset$, with Player II winning otherwise.  A space is (strongly) Choquet if Player II has a winning strategy in the (strong) Choquet game.  Strongly Choquet implies Choquet implies Baire.
\end{definitions}

\begin{lemma}\label{stronglyChoquetfrombasis}  If a space $X$ has a basis such that any nesting decreasing countable sequence $\{B_n\}_{n\in \omega}$ of basis elements has nonempty intersection, then $X$ is strongly Choquet.
\end{lemma}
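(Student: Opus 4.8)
The plan is to simply hand Player II an explicit strategy and check it wins every run. The strategy is essentially memoryless: whenever Player I plays a pair $(x_n, U_n)$ (subject to the constraint $U_n \subseteq V_{n-1}$ when $n \geq 1$), Player II uses the fact that $\Bo$ is a basis and $U_n$ is an open set containing $x_n$ to pick some $B_n \in \Bo$ with $x_n \in B_n \subseteq U_n$, and then declares $V_n := B_n$.

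First I would verify this is always a legal move for Player II: by definition $V_n = B_n$ is open, contains $x_n$, and is contained in $U_n$, which is exactly what the rules of the strong Choquet game demand. So such a run is well defined no matter how Player I plays.

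Next I would observe that the sequence $\{B_n\}_{n \in \omega}$ produced during the run is nesting decreasing. Indeed, for each $n$ the rules force Player I's move to satisfy $U_{n+1} \subseteq V_n = B_n$, and Player II's choice gives $B_{n+1} \subseteq U_{n+1}$; chaining these yields $B_{n+1} \subseteq B_n$. Now the hypothesis on the basis applies directly: $\bigcap_{n \in \omega} B_n \neq \emptyset$. Since $B_n \subseteq U_n$ for every $n$, we get $\emptyset \neq \bigcap_{n \in \omega} B_n \subseteq \bigcap_{n \in \omega} U_n$, so Player II wins this run. As the run was arbitrary, the strategy just described is winning, and hence $X$ is strongly Choquet.

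I do not expect any genuine obstacle here; the proof is a short unwinding of the definitions. The only point requiring a little care is bookkeeping the index conventions of the game (whose statement in the preliminaries contains an evident misprint, ``$U_1 \subseteq V_1$'' for ``$U_1 \subseteq V_0$''), so that the chain $B_{n+1} \subseteq U_{n+1} \subseteq V_n = B_n$ is set up with the correct offsets.
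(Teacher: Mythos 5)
Your proposal is correct and is essentially identical to the paper's proof: Player II always responds with a basis element $B_n$ satisfying $x_n \in B_n \subseteq U_n$, the resulting sequence is nesting decreasing, and the hypothesis gives $\bigcap_{n\in\omega} B_n \neq \emptyset$. You merely spell out the nesting and legality checks (and the index typo) that the paper leaves implicit.
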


\begin{proof}  Assume the hypotheses.  We define a winning strategy for Player II: given a move $(x_n, U_n)$ we let $V_n$ be any basis element such that $x_n \in V_n \subseteq U_n$.  By assumption we get $\bigcap_{n\in \omega} V_n \neq \emptyset$, so this is a winning strategy.
\end{proof}

\begin{proposition}\label{cardinalityChoquet}  If $X$ is a nonempty, Hausdorff, Choquet space with no isolated points then $|X| \geq 2^{\aleph_0}$.  In particular the conclusion applies if $X$ is a regular, connected, strongly Choquet space with at least two points.
\end{proposition}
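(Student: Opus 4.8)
The plan is to construct a Cantor scheme of nonempty open sets indexed by the binary tree $2^{<\omega}$, use a winning strategy for Player II in the Choquet game to force nonempty intersections along branches, and thereby produce an injection $2^{\omega}\hookrightarrow X$. Fix a winning strategy $\sigma$ for Player II in the Choquet game on $X$. I would build, by recursion on $|s|$ for $s\in 2^{<\omega}$, a finite partial run of the Choquet game in which Player II has followed $\sigma$; to each $s$ I attach Player I's final move $P_s$ in that run and Player II's response $Q_s$ (a nonempty open subset of $P_s$), remembering the whole history of the run. At the root (the empty sequence $\emptyset$) let $P_{\emptyset}=X$ — a legal first move, since $X$ is nonempty and open — and $Q_{\emptyset}=\sigma(\langle X\rangle)$. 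Given $s$ together with $Q_s$: since $X$ has no isolated points and $Q_s$ is a nonempty open subspace, $Q_s$ has at least two points, so by the Hausdorff property I can choose disjoint nonempty open sets $P_{s0},P_{s1}\subseteq Q_s$; each $P_{si}$ is a legal next move for Player I, and I let $Q_{si}$ be Player II's response dictated by $\sigma$, appending $P_{si},Q_{si}$ to the history on the $i$-side.

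For each branch $x\in 2^{\omega}$ the moves $P_{\emptyset}=X\supseteq Q_{\emptyset}\supseteq P_{x|1}\supseteq Q_{x|1}\supseteq P_{x|2}\supseteq\cdots$ form, by construction, a full run of the Choquet game in which Player II follows $\sigma$; since $\sigma$ is winning, $\bigcap_{n}P_{x|n}=\bigcap_{n}Q_{x|n}\neq\emptyset$, and I choose $f(x)$ in this intersection. If $x\neq y$ and $n$ is least with $x(n)\neq y(n)$, then, writing $s=x|n=y|n$, we have $f(x)\in P_{s\,x(n)}$ and $f(y)\in P_{s\,y(n)}$; since $P_{s0}\cap P_{s1}=\emptyset$ this yields $f(x)\neq f(y)$. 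Hence $f\colon 2^{\omega}\to X$ is injective and $|X|\geq 2^{\aleph_0}$. For the final assertion: strongly Choquet implies Choquet, a regular space is Hausdorff by our convention, and if some $x\in X$ were isolated then $\{x\}$ would be open and — $X$ being $T_1$ — also closed, hence a proper nonempty clopen subset (as $|X|\geq 2$), contradicting connectedness; so such an $X$ is a nonempty Hausdorff Choquet space without isolated points, and the first part applies.

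The one point deserving care is that Hausdorffness alone suffices here, whereas the usual Cantor-scheme argument tends to invoke regularity to separate pieces by disjoint \emph{closures}: the branch point $f(x)$ is trapped inside the \emph{open} sets $P_{s0}$ and $P_{s1}$, not their closures, so disjointness of those open sets is precisely what separates $f(x)$ from $f(y)$. The remaining work — verifying that the attached partial runs cohere along the tree so that each branch really does give a legal run following $\sigma$ — is routine, since any two branches share a common history up to their splitting node and the moves prescribed by $\sigma$ agree there.
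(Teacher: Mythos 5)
Your proof is correct and follows essentially the same route as the paper's: both build a Cantor scheme of pairwise-disjoint nonempty open sets indexed by $2^{<\omega}$ by interleaving Player II's winning strategy in the Choquet game with a Hausdorff splitting (possible since no point is isolated), then read off a point from each branch's nonempty intersection to inject $2^{\omega}$ into $X$. The reduction of the second sentence to the first (connected with at least two points has no isolated points, regular implies Hausdorff, strongly Choquet implies Choquet) is also exactly the paper's.
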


\begin{proof}  Obviously the second sentence of the proposition follows from the first, since a connected space with at least two points cannot have isolated points.  To prove the claim in the first sentence we let $X$ satisfy the hypotheses.  Let $\Sigma$ be a winning strategy for Player II.  Let $\{0, 1\}^{<\omega}$ denote the tree of functions $s: n \rightarrow \{0, 1\}$ where $n\in \omega$.  For $s \in \{0, 1\}^{<\omega}$ we let $|s|$ denote the cardinality of the domain of $s$, and for $i\in \{0, 1\}$ let $s\smallfrown i$ denote the function $t: |s|+ 1 \rightarrow \{0, 1\}$ such that $t\upharpoonright |s| = s$ and $t(|s|) = i$.

We define a scheme of open sets labeled by elements of $2^{< \omega}$ by induction.  For $|s| = 0$ we let $U_s = X$.  Assuming that we have already defined $U_s$ for all $|s| = n$ we let $U_s \supseteq V_s$ be the move dictated by $\Sigma$ under the partial run of the game $(U_{s\upharpoonright 0}, V_{s\upharpoonright 0}, U_{s\upharpoonright 1}, \ldots, U_s)$.  As $V_s \neq \emptyset$ we select disjoint nonempty open sets $U_{s \smallfrown 0}, U_{s\smallfrown 1} \subseteq V_s$.

For each element $\sigma$ of the Cantor set $\{0, 1\}^{\omega}$ we see that $(U_{\sigma\upharpoonright 0}, V_{\sigma \upharpoonright 0}, \ldots)$ is a run of the game with Player II employing strategy $\Sigma$, and so $\bigcap_{n\in \omega} U_{\sigma \upharpoonright n} \neq \emptyset$, and for distinct $\sigma \in \{0, 1\}^{\omega}$ these sets are disjoint.
\end{proof}

The relevance of the remaining material in this section will be made clear in Section \ref{modelth}.  Given a collection $\mathcal{S}$ of subsets of a set $X$, the collection $\mathcal{B} = \{X\} \cup \{\cap \mathcal{J} \mid \mathcal{J} \subseteq \mathcal{S}, |\mathcal{J}|<\infty\}$ is a basis for a topology, which we denote $\tau(\mathcal{S})$, on $X$.  The topology $\tau(\mathcal{S})$ is the smallest (under inclusion) topology on $X$ under which all elements of $\mathcal{S}$ are open.

\begin{definition}If $X$ is a topological space whose topology equals $\tau(\mathcal{S})$ then we call $\mathcal{S}$ a \emph{subbasis} for the topology on $X$.
\end{definition}

It is easy to see that if $x_0, x_1 \in X$ and for all $S \in \mathcal{S}$ we have $x_0 \in S$ implies $x_1\in S$ then for all open sets $O \in \tau(\mathcal{S})$ we have $x_0\in O$ implies $x_1 \in O$.

\begin{lemma}\label{makingHausdorff}  Suppose $X$ is a connected topological space.  Let

\begin{center}
$\hat{X} = \{(x, i, j) \in X \times \{0, 1\}^2\mid i\neq 1 \vee j\neq 1\}$
\end{center}

\noindent Let $O_1 =\{(x, i, j) \in \hat{X}\mid i = 1\}$ and $O_2 = \{(x, i, j)\in \hat{X} \mid j =1\}$ .  Endowing $\hat{X}$ with the topology given by subbasis $\{(x, i, j)\in \hat{X} \mid x\in O\}_{O \subseteq X \text{ open}} \cup \{O_1, O_2\}$, the space $\hat{X}$ is connected.
\end{lemma}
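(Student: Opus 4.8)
The plan is to give an explicit description of the open sets of $\hat{X}$ and then rule out a nontrivial disconnection by pushing it down to $X$ along the projection $\pi\colon\hat{X}\to X$, $(x,i,j)\mapsto x$. First I would unpack the subbasis: apart from $O_1$ and $O_2$, the subbasic sets are exactly the cylinders $\pi^{-1}(O)$ for $O$ open in $X$, and these are closed under finite intersection; since $O_1\cap O_2=\emptyset$, it follows that the nonempty basic open sets of $\hat{X}$ are precisely those of the form $\pi^{-1}(O)$, $\pi^{-1}(O)\cap O_1$, or $\pi^{-1}(O)\cap O_2$ with $O$ open in $X$. The structural fact I want to extract is the following: $\hat{X}$ is the disjoint union of three ``sheets'' $O_0:=\hat{X}\setminus(O_1\cup O_2)=\{(x,0,0):x\in X\}$, $O_1=\{(x,1,0):x\in X\}$, and $O_2=\{(x,0,1):x\in X\}$, each carried homeomorphically onto $X$ by $\pi$; and the only basic open sets meeting the central sheet $O_0$ are the full cylinders $\pi^{-1}(O)$ (a set of the form $\pi^{-1}(O)\cap O_1$ lies in $O_1$, which is disjoint from $O_0$, and likewise for $O_2$). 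Consequently every open neighborhood of a point $(x,0,0)$ contains the entire fiber $\pi^{-1}(x)=\{(x,0,0),(x,1,0),(x,0,1)\}$.

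Next I would assume for contradiction that $\hat{X}=V_0\sqcup V_1$ is a nontrivial disconnection. By the neighborhood fact just recorded, whichever of $V_0,V_1$ contains $(x,0,0)$ must contain all of $\pi^{-1}(x)$; so setting $X_k:=\{x\in X:\pi^{-1}(x)\subseteq V_k\}$ yields a partition $X=X_0\sqcup X_1$ (disjoint because $V_0\cap V_1=\emptyset$ while fibers are nonempty, and covering because each fiber lies entirely in one $V_k$). Each $X_k$ is open: for $x\in X_k$ pick a basic neighborhood $\pi^{-1}(O)\ni(x,0,0)$ with $\pi^{-1}(O)\subseteq V_k$; then for every $y\in O$ we have $(y,0,0)\in\pi^{-1}(O)\subseteq V_k$, hence $\pi^{-1}(y)\subseteq V_k$, so $O\subseteq X_k$. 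Since $X$ is connected, one of $X_0,X_1$ is empty, say $X_1=\emptyset$; then $\pi^{-1}(x)\subseteq V_0$ for every $x\in X$, so $V_0=\hat{X}$ and $V_1=\emptyset$, contradicting nontriviality. Hence $\hat{X}$ is connected. (The case $X=\emptyset$ is subsumed, since then $\hat{X}=\emptyset$ admits no nontrivial disconnection.)

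The only step that requires genuine care is the first: correctly enumerating the basic open sets and verifying that any basic open set meeting $O_0$ must be a full cylinder $\pi^{-1}(O)$ — the combinatorial fact that makes this work being $O_1\cap O_2=\emptyset$, which also guarantees $\hat{X}$ is nonempty enough for the argument. Everything afterward is the standard principle that a continuous surjection onto a connected base, all of whose fibers are connected (here even stronger: each fiber has a point whose only neighborhood in the fiber is the whole fiber), has connected total space, specialized to the situation at hand.
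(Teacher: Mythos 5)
Your proof is correct, but it spreads the disconnection by a different mechanism than the paper does. The paper's proof observes that the three sheets $\hat{X}_{00}=\hat{X}\setminus(O_1\cup O_2)$, $O_1$, and $O_2$ are each homeomorphic to $X$ (hence connected), and then uses the absorption fact --- every open set containing $(x,0,0)$ also contains $(x,1,0)$ and $(x,0,1)$ --- to conclude that whichever piece of a disconnection meets $\hat{X}_{00}$ must contain all of $\hat{X}_{00}$ and therefore all of $O_1\cup O_2$ as well. You instead use only the absorption fact, strengthened to the observation that every neighborhood of $(x,0,0)$ contains the entire fiber $\pi^{-1}(x)$, to show that any disconnection of $\hat{X}$ is saturated with respect to the fibers of $\pi$ and hence descends to a disconnection $X=X_0\sqcup X_1$ of the base; connectedness of $X$ then finishes. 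Your route is slightly more economical (you never need that $O_1$ and $O_2$ are themselves connected, only that each fiber has a ``central'' point whose neighborhoods swallow the fiber), and your explicit enumeration of the basic open sets, using $O_1\cap O_2=\emptyset$, is the right amount of care for the openness of the $X_k$. The trade-off is that the paper's sheet-by-sheet template transfers more readily to the companion Lemmas \ref{splittinginhalf} and \ref{makingregularity}, where some fibers lack such a central point and one genuinely needs connectedness of individual sheets; your fiberwise descent would not apply verbatim there.
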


\begin{proof}  Let $\hat{X}_{00} = \hat{X} \setminus (O_1 \cup O_2)$.  It is fairly easy to see that each of $\hat{X}_{00}$, $O_1$, and $O_2$ is homeomorphic to $X$ (using projection to the $X$ coordinate).  Particularly each of $\hat{X}_{00}$, $O_1$, and $O_2$ is a connected subspace of $\hat{X}$.  Notice as well that any open set containing an element $(x, 0, 0) \in \hat{X}_{00}$ must also contain the elements $(x, 1, 0) \in O_1$ and $(x, 0, 1) \in O_2$.

If $\hat{X} = V_0 \sqcup V_1$ is a disconnection, we suppose without loss of generality that $V_0 \cap \hat{X}_{00} \neq \emptyset$.  As $\hat{X}_{00}$ is connected we know $\hat{X}_{00} \subseteq V_0$, and therefore $O_1, O_2 \subseteq V_0$.  Thus $\hat{X} = \hat{X}_{00} \cup O_1 \cup O_2 \subseteq V_0$ and we are done.
\end{proof}

\begin{lemma}\label{makingbasis}  Suppose $X$ is a connected topological space and $W_0, W_1 \subseteq X$ are open with $W_0 \cap W_1 \neq \emptyset$ (we allow $W_0 = W_1$).  Let

\begin{center}
$\hat{X} = \{(x, i) \in X \times \{0, 1\}\mid i = 1 \Rightarrow x\in W_0 \cap W_1\}$
\end{center}

\noindent  Let $O_1 = \{(x, i)\in \hat{X} \mid i = 1\}$.  Endowing $\hat{X}$ with the topology given by subbasis $\{(x, i)\in \hat{X}\mid x\in O\}_{O \subseteq X \text{ open}} \cup \{O_1\}$, the space $\hat{X}$ is connected.
\end{lemma}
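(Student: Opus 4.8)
The plan is to mimic the proof of Lemma \ref{makingHausdorff}, the argument being simpler here since only one new subbasic set $O_1$ is adjoined. Write $\hat{X}_0 = \hat{X} \setminus O_1 = \{(x, 0) \mid x \in X\}$ and let $\pi \colon \hat{X} \to X$ be the projection $(x, i) \mapsto x$. First I would observe that $\pi$ restricts to a homeomorphism $\hat{X}_0 \to X$: each subbasic set $\{(x,i) \in \hat{X} \mid x \in O\}$ meets $\hat{X}_0$ in $\pi^{-1}(O) \cap \hat{X}_0$, while $O_1 \cap \hat{X}_0 = \emptyset$, so the subspace topology on $\hat{X}_0$ is exactly the preimage topology under $\pi$. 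In particular $\hat{X}_0$ is a connected subspace of $\hat{X}$.

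The key point, as in Lemma \ref{makingHausdorff}, is that any open subset of $\hat{X}$ containing a point $(x, 0)$ with $x \in W_0 \cap W_1$ must also contain $(x, 1)$. Indeed, such an open set contains a basic neighborhood $B$ of $(x, 0)$, that is, a finite intersection of subbasic sets each containing $(x, 0)$; since $(x, 0) \notin O_1$, the set $O_1$ is not among these factors, so $B = \pi^{-1}(O')$ for some open $O' \subseteq X$ with $x \in O'$. Because $x \in W_0 \cap W_1$, the pair $(x, 1)$ is a legitimate element of $\hat{X}$, and then $(x, 1) \in \pi^{-1}(O') = B$. (It is precisely here that the defining constraint $i = 1 \Rightarrow x \in W_0 \cap W_1$ on $\hat{X}$ is used.)

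Finally, suppose $\hat{X} = V_0 \sqcup V_1$ is a disconnection. Since $\hat{X}_0$ is connected it lies entirely within one of the $V_i$; assume without loss of generality $\hat{X}_0 \subseteq V_0$. Given any $(x, 1) \in O_1$ we have $x \in W_0 \cap W_1$, hence $(x, 0) \in \hat{X}_0 \subseteq V_0$; as $V_0$ is open, the previous paragraph forces $(x, 1) \in V_0$. Thus $O_1 \subseteq V_0$, so $\hat{X} = \hat{X}_0 \cup O_1 \subseteq V_0$ and $V_1 = \emptyset$, proving $\hat{X}$ connected.

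I do not expect a genuine obstacle here; the only mild subtlety is the bookkeeping in the middle paragraph, ensuring that a basic neighborhood of a point of $\hat{X}_0$ can never have $O_1$ as one of its factors and is therefore pulled back from $X$. The hypothesis $W_0 \cap W_1 \neq \emptyset$ plays no role in the connectedness argument itself beyond making $O_1$ potentially nonempty, so that it can serve as a genuine new basis element in the larger construction.
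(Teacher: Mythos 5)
Your proof is correct and follows essentially the same route as the paper's: identify $\hat{X}_0 = \hat{X} \setminus O_1$ as a homeomorphic (hence connected) copy of $X$, note that any open set containing $(x,0)$ with $x \in W_0 \cap W_1$ must contain $(x,1)$, and conclude that a disconnection absorbing $\hat{X}_0$ must absorb $O_1$ as well. The extra bookkeeping you supply for the basic-neighborhood step is a correct elaboration of what the paper leaves implicit.
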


\begin{proof}  Notice that the set $\hat{X}_0 = \hat{X} \setminus O_1$ is homeomorphic to $X$ (using projection to the $X$ coordinate).  Thus $\hat{X}_0$ is connected.  Any open set containing an element $(x, 0) \in (W_0 \cap W_1)\times \{0\}$ will also contain $(x, 1)$.  If $\hat{X} = V_0 \sqcup V_1$ is a disconnection, without loss of generality $V_0 \cap \hat{X}_0 \neq \emptyset$, we have $V_0 \supseteq \hat{X}_0$, and therefore $V_0 \supseteq O_1$ as well.  Therefore $\hat{X} = \hat{X}_0 \cup O_1 = V_0$.
\end{proof}

\begin{lemma}\label{splittinginhalf}  Suppose $X$ is a connected topological space and $W \subseteq X$ is a nonempty open subset.  Let

\begin{center}

$\hat{X} = \{(x, i, j)\in X \times \{0, 1\}^2 \mid [x\in W \vee i = 1 \vee j = 1]\wedge[i = j = 1 \Rightarrow x\in W]\}$

\end{center}

\noindent Let $O_1 = \{(x, i, j)\in \hat{X} \mid i = 1\}$ and $O_2 = \{(x, i, j)\in \hat{X} \mid j = 1\}$.  Endowing $\hat{X}$ with the topology given by subbasis $\{(x, i, j)\in \hat{X} \mid x\in O\}_{O \subseteq X \text{ open}} \cup \{O_1, O_2\}$, the space $\hat{X}$ is connected.
\end{lemma}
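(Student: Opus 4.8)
The plan is to imitate the argument structure of Lemmas \ref{makingHausdorff} and \ref{makingbasis}: identify a ``large'' connected subspace of $\hat X$ that is a homeomorphic copy of $X$, show every disconnection must place this copy on one side, and then argue that the leftover points $O_1 \cup O_2$ must join that same side. First I would introduce the natural strata $\hat X_{00} = \{(x,i,j)\in\hat X \mid i = j = 0\}$, $\hat X_{10} = \{(x,1,0)\}\cap\hat X$, $\hat X_{01} = \{(x,0,1)\}\cap\hat X$, and $\hat X_{11} = \{(x,1,1)\}\cap\hat X$. By the defining condition $[x\in W \vee i=1 \vee j=1]$ the set $\hat X_{00}$ projects homeomorphically onto $W$ (not all of $X$!), while $O_1 = \hat X_{10}\cup\hat X_{11}$ projects onto $X$ and $O_2 = \hat X_{01}\cup\hat X_{11}$ projects onto $X$; the condition $[i=j=1\Rightarrow x\in W]$ makes $\hat X_{11}$ project onto $W$. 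So the genuinely large connected pieces are $O_1$ and $O_2$, each a copy of the connected space $X$, rather than $\hat X_{00}$.

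The key structural observation I would record is the ``gluing'' of subbasic neighborhoods across strata. Fix $x\in W$. Then $(x,0,0), (x,1,0), (x,0,1), (x,1,1)$ all lie in $\hat X$. A subbasic set of the form $\{(x',i,j)\in\hat X \mid x'\in O\}$ for $O\subseteq X$ open contains a given one of these four points iff it contains all four of them (it only constrains the $X$-coordinate). The only subbasic sets that can separate points with the same $X$-coordinate are $O_1$ and $O_2$. Consequently: any open set containing $(x,0,0)$ (for $x\in W$) contains $(x,1,0)$ and $(x,0,1)$; any open set containing $(x,1,0)$ with $x\in W$ and that also meets the requirement of being in the topology generated must — wait, more carefully — the point is that the \emph{closure}/neighborhood relations force $O_1$ to be ``attached'' to $\hat X_{00}$ along $W\times\{(1,0)\}$ meeting $W\times\{(0,0)\}$, and similarly for $O_2$. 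Concretely: since $W\neq\emptyset$, pick $x_0\in W$; then $(x_0,1,0)\in O_1$ and $(x_0,0,1)\in O_2$ and every neighborhood of $(x_0,0,0)$ contains both, so no disconnection can put $O_1$ (resp.\ $O_2$) on a different side from the component of $(x_0,0,0)$.

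Now the argument itself: suppose $\hat X = V_0\sqcup V_1$ is a disconnection. Since $O_1$ is a connected subspace (homeomorphic to $X$), it lies entirely in $V_0$ or entirely in $V_1$; say $O_1\subseteq V_0$. Pick $x_0\in W$; then $(x_0,1,0)\in O_1\subseteq V_0$, and since $V_0$ is open and $(x_0,0,0)$ lies in every neighborhood of $(x_0,1,0)$ that is generated purely by $X$-open subbasic sets — hmm, I need the implication in the correct direction. The usable fact from the text is: ``if $x_0\in S$ implies $x_1\in S$ for all subbasic $S$, then $x_0\in O$ implies $x_1\in O$ for all open $O$.'' So I want to check that every subbasic set containing $(x_0,1,0)$ also contains $(x_0,0,0)$: the $X$-open subbasic sets do (same $X$-coordinate), $O_1$ does not contain $(x_0,0,0)$, and $O_2$ contains neither. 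That fails for $O_1$. So instead I argue in the other direction, which does hold: every subbasic set containing $(x_0,0,0)$ also contains $(x_0,1,0)$ and $(x_0,0,1)$ (the $X$-open ones by coordinate, and $O_1,O_2$ vacuously since they don't contain $(x_0,0,0)$). Hence every open set containing $(x_0,0,0)$ contains $(x_0,1,0)$ and $(x_0,0,1)$; equivalently $(x_0,0,0)$ lies in the closure of $\{(x_0,1,0)\}$ and of $\{(x_0,0,1)\}$. Since $(x_0,1,0)\in O_1\subseteq V_0$ and $V_1$ is open, $(x_0,0,0)\notin V_1$, so $(x_0,0,0)\in V_0$; then $(x_0,0,0)\in V_0$ forces, since $\hat X_{00}$ is connected, $\hat X_{00}\subseteq V_0$; and for each $x\in W$ the point $(x,0,1)\in O_2$ lies in the closure-neighborhood relation with $(x,0,0)\in\hat X_{00}\subseteq V_0$, so $O_2\cap V_1=\emptyset$ (again using that every neighborhood of $(x,0,0)$ meets $O_2$ at $(x,0,1)$, so if $(x,0,1)\in V_1$ then the neighborhood... no). Let me instead use connectedness of $O_2$ directly: $O_2$ is connected, and it contains $(x_0,0,1)$, which I've shown lies in $V_0$ (same argument as $(x_0,1,0)$: $(x_0,0,0)\in\overline{\{(x_0,0,1)\}}$ wait that's backwards again). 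The clean route: $O_2$ is connected so $O_2\subseteq V_0$ or $O_2\subseteq V_1$; if $O_2\subseteq V_1$ then since $\hat X_{00}\subseteq V_0$ and $(x_0,0,0)\in V_0$ has every neighborhood meeting $O_2$ (at $(x_0,0,1)$), $V_0$ meets $V_1$, contradiction; so $O_2\subseteq V_0$. Finally $\hat X = O_1\cup O_2\cup \hat X_{00} \subseteq V_0$ (since $\hat X_{00}\subseteq \overline{O_1}$ as well, or just directly: every point of $\hat X$ is in one of these three sets), so $V_1=\emptyset$ and the disconnection is trivial.

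\medskip

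The step I expect to be the main (minor) obstacle is bookkeeping the directionality of the neighborhood-implication lemma correctly — the copies $O_1, O_2$ are ``open and dense-ish'' relative to $\hat X_{00}$ rather than the reverse, so one must route every attachment argument through statements of the form ``every neighborhood of a point in $\hat X_{00}$ meets $O_1$ and $O_2$'' together with connectedness of $O_1$ and $O_2$, never through a spurious claim that neighborhoods of $O_1$-points contain $\hat X_{00}$-points. Verifying that $O_1$, $O_2$, and $\hat X_{00}$ are each homeomorphic (via the $X$-coordinate projection) to $X$, $X$, and $W$ respectively — hence connected, using that open subspaces of... no, $W$ is connected because it's open in the connected... that's false in general; rather $W$ need not be connected, but $\hat X_{00}\cong W$ need not be connected either — so I should \emph{not} claim $\hat X_{00}$ is connected. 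Let me drop that: the argument only needs $O_1$ and $O_2$ connected (they are copies of $X$), and needs that every point of $\hat X_{00}$ is in the closure of a point of $O_1$; then from $O_1\subseteq V_0$ and $O_2\subseteq V_0$ we get, for each $(x,0,0)\in\hat X_{00}$, that every neighborhood meets $O_1\subseteq V_0$, so $(x,0,0)\notin V_1$, whence $\hat X_{00}\subseteq V_0$ and $V_1=\emptyset$. This repairs the plan: the only connectivity inputs are those of the two copies $O_1,O_2$ of $X$, exactly as in Lemma \ref{makingHausdorff}.
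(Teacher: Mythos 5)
Your overall strategy is the paper's: isolate connected strata that any disconnection must swallow whole, and exploit the fact that for $x\in W$ every open set containing $(x,0,0)$ contains all four points $(x,i,j)$. Your self-corrections are also on target: $\hat{X}_{00}$ is a copy of $W$, not of $X$, and must not be assumed connected, and the subbasis implication only runs in the direction you finally settle on. The one step that fails as written is the claim that $O_1$ and $O_2$ are homeomorphic to $X$ via projection. They are not: $O_1=\hat{X}_{10}\cup\hat{X}_{11}$, and the projection to $X$ is two-to-one over $W$ (both $(x,1,0)$ and $(x,1,1)$ lie in $O_1$ when $x\in W$), so ``$O_1$ is a copy of $X$'' is false and the connectedness of $O_1$, which your argument leans on, is not free. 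It is still true, but it needs its own small argument, essentially Lemma \ref{makingbasis} run inside $O_1$: the stratum $\hat{X}_{10}\cong X$ is connected, and every open set containing $(x,1,0)$ with $x\in W$ also contains $(x,1,1)$ (the only subbasic set separating the two is $O_2$, which misses $(x,1,0)$), so $\hat{X}_{11}$ cannot be split off from $\hat{X}_{10}$.

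The cleaner repair, and what the paper actually does, is to use the genuinely homeomorphic copies $\hat{X}_1=\hat{X}_{10}$ and $\hat{X}_2=\hat{X}_{01}$ as the connected inputs; on each of these the projection is a bijection and the subspace topology is the pullback topology, since $O_1$ restricts to everything and $O_2$ to nothing (and vice versa). Then: fix $x\in W$ with, say, $(x,0,0)\in V_0$; all four $(x,i,j)$ lie in $V_0$; connectedness of $\hat{X}_1$ and $\hat{X}_2$ places both inside $V_0$; and for every $x'\in W$ the point $(x',1,0)\in\hat{X}_1\subseteq V_0$ forces $(x',0,0)\in V_0$ and hence all four $(x',i,j)\in V_0$, which accounts for $\hat{X}_{00}$ and $\hat{X}_{11}$ and finishes. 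With that substitution (or with the extra connectedness argument for $O_1,O_2$ supplied) your plan is correct and coincides with the paper's proof; as written, the homeomorphism claim is a genuine, though easily repaired, error.
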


\begin{proof}  Notice that the subspaces $\hat{X}_1 = \{(x, i, j) \mid i = 1, j = 0\}$ and $\hat{X}_2 = \{(x, i, j)\mid i = 0, j = 1\}$ are each homeomorphic to $X$ (using projection to the $X$ coordinate).  Thus $\hat{X}_1$ and $\hat{X}_2$ are connected.  Also, if $x\in W$ then any open set containing $(x, 0, 0)$ must also contain $(x, i, j)$ for all choices $i, j\in \{0, 1\}$.

Suppose $\hat{X} = V_0 \sqcup V_1$ is a disconnection.  Let $x\in W$ and suppose without loss of generality that $(x, 0, 0) \in V_0$.  Then $(x, i, j)\in V_0$ for all choices $i, j\in \{0, 1\}$.  As $\hat{X}_1$ is connected we have $\hat{X}_1 \subseteq V_0$, and similarly $\hat{X}_2 \subseteq V_0$.  Then for any $x' \in W$ we have $(x', i, j) \in V_0$ for $i, j \in V_0$.  Then $\hat{X} = \hat{X}_1 \cup \hat{X}_2 \cup \{(x, i, j) \in \hat{X}\mid x\in W\} \subseteq V_0$ and we are done.
\end{proof}

\begin{lemma}\label{makingregularity}  Suppose $X$ is a connected topological space and $W \subseteq X$ is a nonempty open set.  Let

\begin{center}

$\hat{X} = \{(x, i, j) \in X \times \{0, 1\}^2 \mid [x\in W \vee j = 1] \wedge [i = 1 \Rightarrow [x\in W \wedge j = 0]]\}$

\end{center}

\noindent Let $O_1 = \{(x, i, j)\in \hat{X} \mid i = 1\}$ and $O_2 = \{(x, i, j)\in \hat{X} \mid j = 1\}$.  Endowing $\hat{X}$ with the topology given by subbasis $\{(x, i, j)\in \hat{X} \mid x\in O\}_{O \subseteq X \text{ open}} \cup \{O_1, O_2\}$, the space $\hat{X}$ is connected.

\end{lemma}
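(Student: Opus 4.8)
The plan is to argue along the same lines as the proofs of Lemmas~\ref{makingHausdorff}, \ref{makingbasis} and \ref{splittinginhalf}: break $\hat{X}$ into its ``sheets,'' locate the sheet that projects onto all of $X$ (and is therefore connected), record the forced incidences between sheets coming from the subbasis, and then show that any disconnection must engulf everything. Concretely, $\hat{X}$ is the disjoint union (as a set) of $\hat{X}_{00} = \{(x,0,0)\mid x\in W\}$, $\hat{X}_{01} = \{(x,0,1)\mid x\in X\}$ and $\hat{X}_{10} = \{(x,1,0)\mid x\in W\}$, since the tuple $(x,1,1)$ never meets the defining condition. Projection to the $X$-coordinate carries $\hat{X}_{01}$ homeomorphically onto $X$, and carries each of $\hat{X}_{00}$ and $\hat{X}_{10}$ homeomorphically onto the subspace $W$; moreover $O_1 = \hat{X}_{10}$ and $O_2 = \hat{X}_{01}$. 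In particular $\hat{X}_{01}$ is a connected subspace.

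The key incidence I would record is this: for $x\in W$ the point $(x,0,0)$ lies in neither $O_1$ nor $O_2$, so every subbasic open set containing it has the form $\{(x',i',j')\in\hat{X}\mid x'\in O\}$ for some open $O\subseteq X$ with $x\in O$, and any such set contains both $(x,0,1)$ (legal for every $x$) and $(x,1,0)$ (legal because $x\in W$). Hence every open set containing $(x,0,0)$ contains $(x,0,1)$ and $(x,1,0)$.

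Then, given a disconnection $\hat{X} = V_0\sqcup V_1$, connectedness of $\hat{X}_{01}$ lets me assume without loss of generality that $\hat{X}_{01}\subseteq V_0$. Fixing any $x\in W$ (possible since $W\neq\emptyset$): if $(x,0,0)\in V_1$ then the open set $V_1$ would contain $(x,0,1)\in\hat{X}_{01}\subseteq V_0$, which is impossible, so $(x,0,0)\in V_0$; and then the open set $V_0$ contains $(x,1,0)$. As $x$ ranges over $W$ this gives $\hat{X}_{00}\cup\hat{X}_{10}\subseteq V_0$, so $\hat{X} = \hat{X}_{00}\cup\hat{X}_{01}\cup\hat{X}_{10}\subseteq V_0$ and the disconnection is trivial.

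The one subtlety to watch — the ``main obstacle'' — is that here, unlike in Lemmas~\ref{makingHausdorff} and \ref{splittinginhalf}, the sheet lying over all of $X$ is the one indexed by $j=1$ rather than by $(0,0)$, while the two remaining sheets lie only over $W$; these need not be connected, so one cannot drag them into $V_0$ wholesale the way those earlier proofs drag $\hat{X}_1$, $\hat{X}_2$. The remedy is exactly the point-by-point step above: the forced incidences pull each individual $(x,0,0)$, and hence each $(x,1,0)$, into $V_0$ separately, so connectedness of $W$ is never invoked.
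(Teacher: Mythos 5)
Your proof is correct and follows essentially the same route as the paper's: both identify $O_2 = X\times\{(0,1)\}$ as the connected sheet homeomorphic to $X$, record that every open set containing $(x,0,0)$ (for $x\in W$) must contain $(x,0,1)$ and $(x,1,0)$, and then use these forced incidences to pull $W\times\{(0,0)\}$ and $W\times\{(1,0)\}$ pointwise into the piece of the disconnection containing $O_2$. The only difference is cosmetic ordering (you fix $O_2\subseteq V_0$ first and then absorb points; the paper starts from a point of $W\times\{(0,0)\}$ and derives $O_2\subseteq V_0$), and your closing remark correctly identifies why the pointwise step replaces the wholesale-sheet argument of the earlier lemmas.
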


\begin{proof}  Notice that $O_2$ is homeomorphic to $X$ (using projection to the first coordinate) and therefore $O_2$ is connected.  Given $x\in X$, any open set containing $(x, 0, 0)$ will also contain $(x, 0, 1)$.  Given $x\in W$, any open set containing $(x, 0, 0)$ will also contain $(x, 1, 0)$.  Suppose $\hat{X} = V_0 \sqcup V_1$ is a disconnection.  As $W$ is nonempty we select $x\in W$.  If, without loss of generality, $(x, 0, 0) \in V_0$ then $(x, 0, 1) \in V_0$, and since $O_2$ is connected we have $O_2 \subseteq V_0$.  Then for arbitrary $x'\in W$ we have $(x', 0, 0) \in V_0$ as well.  Then $W \times \{0\}^2 \cup O_2 \subseteq V_0$, and as $W\times \{0\}^2 \subseteq V_0$ we have $W \times \{1\} \times \{0\} \subseteq V_0$.  Then $\hat{X} \subseteq V_0$.
\end{proof}

\end{section}

\begin{section}{Model Theory Preliminaries}\label{modelth}

In the course of our construction we will consider first-order logical statements regarding unary relations.  More specifically we'll construct an uncountable $I_{\Rel} \subseteq \aleph_1$ which will index a collection $\{\B_{\alpha}\}_{\alpha \in I_{\Rel}}$ of unary relations.  A collection of first-order sentences $\{\Theta_{\alpha}\}_{\alpha \in I_{\Log}}$, indexed by a subset $I_{\Log} \subseteq I_{\Rel}$, will be constructed simultaneously.  Each of the $\B_{\alpha}$ will be first introduced in some sentence $\Theta_{\alpha'}$, and each $\Theta_{\alpha'}$ will introduce one or more new relations $\B_{\alpha}$ which has not yet been mentioned.  These occur under one of the following circumstances.

\begin{enumerate}[(a)]

\item $\Theta_{\alpha_1} \equiv [\B_{\alpha_0} \cap \B_{\alpha_1} = \emptyset] \wedge [\B_{\alpha_0} \neq \emptyset]\wedge[\B_{\alpha_1} \neq \emptyset]$, where $\alpha_1 = \alpha_0 +1$ and neither $\B_{\alpha_0}$ nor $\B_{\alpha_1}$ has yet occurred in a sentence $\Theta_{\alpha'}$ with $\alpha' \in I_{\Log} \cap \alpha_1$.

\item $\Theta_{\alpha_2} \equiv [\B_{\alpha_0} \cap \B_{\alpha_1} \supseteq \B_{\alpha_2}]\wedge [\B_{\alpha_2} \neq \emptyset]$, with $\alpha_0, \alpha_1 < \alpha_2$, and $\B_{\alpha_0}$ has already occurred in some sentence $\Theta_{\alpha'}$ with $\alpha' \in I_{\Log} \cap \alpha_2$, and $\B_{\alpha_1}$ has already occurred in some sentence $\Theta_{\alpha''}$ with $\alpha'' \in I_{\Log} \cap \alpha_2$, and $\B_{\alpha_2}$ has not yet occurred in any sentence $\Theta_{\alpha'''}$ with $\alpha'''\in I_{\Log} \cap \alpha_2$.

\item $\Theta_{\alpha_1} \equiv [\B_{\alpha_0} \cap \B_{\alpha_1} \subseteq \B_{\alpha_2}] \wedge (\forall x)[x\in \B_{\alpha_0} \cup \B_{\alpha_1} \cup \B_{\alpha_2}]\wedge[\B_{\alpha_0}\neq \emptyset]\wedge[\B_{\alpha_1} \neq \emptyset]$, with $\alpha_0 + 1 = \alpha_1$, and $\alpha_2 < \alpha_0$, and $\B_{\alpha_2}$ has already occurred in some $\Theta_{\alpha'}$ with $\alpha' \in I_{\Log} \cap \alpha_0$, and neither $\B_{\alpha_0}$ nor $\B_{\alpha_1}$ has occurred in any sentence $\Theta_{\alpha''}$ with $\alpha'' \in I_{\Log} \cap \alpha_1$.

\item $\Theta_{\alpha_1} \equiv (\forall x)[x\in \B_{\alpha_2} \cup \B_{\alpha_1}]\wedge[\B_{\alpha_0} \subseteq \B_{\alpha_2}]\wedge [\B_{\alpha_0} \cap \B_{\alpha_1} = \emptyset]\wedge[\B_{\alpha_0}\neq \emptyset]\wedge[\B_{\alpha_1}\neq \emptyset]$, with $\alpha_1 = \alpha_0 + 1$ and $\alpha_2 < \alpha_0$, and $\B_{\alpha_2}$ has already occurred in a sentence $\Theta_{\alpha'}$ with $\alpha'\in I_{\Log}\cap \alpha_0$, and neither $\B_{\alpha_0}$ nor $\B_{\alpha_1}$ has occurred in a sentence $\Theta_{\alpha''}$ with $\alpha'' \in I_{\Log} \cap  \alpha_1$.
\end{enumerate}

Consider the map $h: I_{\Rel} \rightarrow I_{\Log}$ with $h(\alpha)$ being the smallest ordinal such that $\B_{\alpha}$ occurs in $\Theta_{h(\alpha)}$.  It is clear that $\alpha \leq h(\alpha)\leq \alpha + 1$.  We will let $I_{\Log, \beta} = I_{\Log} \cap \beta$, $\Gamma_{\beta} = \{\Theta_{\alpha}\}_{\alpha \in I_{\Log, \beta}}$ and $I_{\Rel, \beta} = \{\alpha \in I_{\Rel} \mid h(\alpha) <\beta\}$.  We will say that $\Gamma$ satisfies $\dagger$, or that the subset $\Gamma_{\beta}$ satisfies $\dagger$, provided that each sentence is of form (a), (b), (c), or (d) and that the order on the set of relations and the order on the set of sentences interact according to the conditions specified in each of (a), (b), (c) or (d).

\begin{remark}\label{existentialproblems}  Suppose that a structure $(\chi, \{\beth_{\alpha}\}_{\alpha \in I_{\Rel, \beta}})$ models $\B_{\alpha} \neq \emptyset$ for each $\alpha \in I_{\Rel, \beta}$ and that $\Gamma_{\beta}$ satisfies $\dagger$.  If it is not a model of  $\Gamma_{\beta}$ then there exists $x\in \chi$ which witnesses this.  More particularly suppose $\Theta_{\alpha}$ is violated.  In case $\Theta_{\alpha}$ is of type (a) then $(\exists x \in \chi)[x \in \beth_{\alpha_0}\cap \beth_{\alpha_1}]$.  In case of type (b), $(\exists x \in \chi)[x\in \beth_{\alpha_2} \setminus (\beth_{\alpha_0}\cap \beth_{\alpha_1})]$.  In case of type (c), $(\exists x \in \chi)[[x \in (\beth_{\alpha_0} \cap \beth_{\alpha_1}) \setminus \beth_{\alpha_2}]\vee [x\notin \beth_{\alpha_0}\cup\beth_{\alpha_1}\cup\beth_{\alpha_2}]]$.  In case of type (d), $(\exists x\in \chi) [[x\notin \beth_{\alpha_2}\cup\beth_{\alpha_1}]\vee[x\in \beth_{\alpha_0}\setminus \beth_{\alpha_2}]\vee[x\in \beth_{\alpha_0}\cap \beth_{\alpha_1}]]$.
\end{remark}

\begin{lemma}\label{addingpoints}  Suppose that a structure $(\chi, \{\beth_{\alpha}\}_{\alpha \in I_{\Rel, \beta}})$ models $\Gamma_{\beta}$, which itself satisfies $\dagger$.  Suppose that $(\chi^*, \{\beth_{\alpha}^*\}_{\alpha \in I_{\Rel, \beta}})$ is such that $\chi^* \supseteq \chi$ and $\beth_{\alpha}^* \supseteq \beth_{\alpha}$.  Also suppose that for each $x^* \in \chi^*$ there exists a model $(\chi^{**}, \{\beth_{\alpha}^{**}\}_{\alpha \in I_{\Rel, \beta}})$ of $\Gamma_{\beta}$ and point $x^{**}\in \chi^{**}$ such that for all $\alpha \in I_{\Rel, \beta}$ we have $x^{**} \in \beth_{\alpha}^{**}$ if and only if $x^* \in \beth_{\alpha}^*$.  Then $(\chi^*, \{\beth_{\alpha}^*\}_{\alpha \in I_{\Rel, \beta}})$ models $\Gamma_{\beta}$.
\end{lemma}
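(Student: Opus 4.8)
The plan is to argue by contradiction: a failure of $\Gamma_\beta$ in the enlarged structure $(\chi^*,\{\beth^*_\alpha\})$ is, by Remark \ref{existentialproblems}, certified by a single point $x^*$ together with its membership pattern in finitely many of the relations; the hypothesis then lets us copy that point into a genuine model of $\Gamma_\beta$, where it would certify the same failure, a contradiction.

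First I would verify that $(\chi^*,\{\beth^*_\alpha\})$ satisfies the standing assumption of Remark \ref{existentialproblems}, namely that it models $\B_\alpha\neq\emptyset$ for every $\alpha\in I_{\Rel,\beta}$. Scanning the syntactic forms (a)--(d), whenever a relation $\B_\alpha$ is first introduced the sentence doing so (or, in case (b), an earlier sentence of $\Gamma$) contains the conjunct $\B_\alpha\neq\emptyset$; a routine induction along $I_{\Log,\beta}$ thus gives $\Gamma_\beta\models \B_\alpha\neq\emptyset$ for each $\alpha\in I_{\Rel,\beta}$. Since $(\chi,\{\beth_\alpha\})\models\Gamma_\beta$ we get $\beth_\alpha\neq\emptyset$, whence $\beth^*_\alpha\supseteq\beth_\alpha$ is nonempty; and $\Gamma_\beta$ satisfies $\dagger$ by hypothesis.

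Now suppose toward a contradiction that $(\chi^*,\{\beth^*_\alpha\})\not\models\Gamma_\beta$. By Remark \ref{existentialproblems} some $\Theta_\alpha\in\Gamma_\beta$ fails, and is witnessed by a point $x^*\in\chi^*$ whose membership in the relations $\beth^*_{\alpha_0},\beth^*_{\alpha_1},\beth^*_{\alpha_2}$ occurring in $\Theta_\alpha$ is one of the patterns explicitly listed there (for example $x^*\in\beth^*_{\alpha_0}\cap\beth^*_{\alpha_1}$ if $\Theta_\alpha$ has type (a)). A short check of the inequalities built into (a)--(d) shows that all indices appearing in $\Theta_\alpha$ lie in $I_{\Rel,\beta}$: this uses only $\alpha<\beta$ together with facts such as $h(\alpha_0),h(\alpha_1)<\alpha_2=\alpha<\beta$ and $h(\alpha_2)\leq\alpha<\beta$ in case (b), and the analogous inequalities in the other cases. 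Applying the hypothesis to $x^*$ yields a model $(\chi^{**},\{\beth^{**}_\alpha\})$ of $\Gamma_\beta$ and a point $x^{**}$ with $x^{**}\in\beth^{**}_\gamma$ if and only if $x^*\in\beth^*_\gamma$ for all $\gamma\in I_{\Rel,\beta}$, in particular for $\gamma\in\{\alpha_0,\alpha_1,\alpha_2\}$. Hence $x^{**}$ realizes exactly the same violating pattern for $\Theta_\alpha$ inside $(\chi^{**},\{\beth^{**}_\alpha\})$, so that structure does not model $\Theta_\alpha$ either, contradicting $(\chi^{**},\{\beth^{**}_\alpha\})\models\Gamma_\beta$. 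This contradiction proves the lemma.

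This is essentially a reflection argument, and I do not anticipate a genuine obstacle. The one point that needs care is the observation, read off directly from the forms (a)--(d), that each conjunct of each $\Theta_\alpha$ is either a nonemptiness clause (disposed of in the first step) or a universally quantified containment among the $\B$'s, so that any failure of $\Theta_\alpha$ (given nonemptiness) is certified by the type over $\{\alpha_0,\alpha_1,\alpha_2\}$ of a single point — precisely the data that the hypothesis allows us to transport between models.
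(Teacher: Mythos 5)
Your proposal is correct and follows essentially the same route as the paper: verify the nonemptiness hypotheses so that Remark \ref{existentialproblems} applies, extract a single witnessing point $x^*$ for the failure of some $\Theta_{\alpha'}$, and transport it via the hypothesis to a genuine model of $\Gamma_{\beta}$ where it yields a contradiction. Your added checks (that $\Gamma_{\beta}\models \B_{\alpha}\neq\emptyset$ and that the relevant indices lie in $I_{\Rel,\beta}$) are details the paper leaves implicit but are correct.
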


\begin{proof}  Assume the hypotheses.  Notice that $(\chi^*, \{\beth_{\alpha}^*\}_{\alpha \in I_{\Rel, \beta}})$ models $\B_{\alpha} \neq \emptyset$ for all $\alpha \in I_{\Rel, \beta}$, since $\beth_{\alpha}^* \supseteq \beth_{\alpha}$ and $(\chi, \{\beth_{\alpha}\}_{\alpha \in I_{\Rel, \beta}})$ models $\B_{\alpha}\neq \emptyset$.  By Remark \ref{existentialproblems}, if $(\chi^*, \{\beth_{\alpha}^*\}_{\alpha \in I_{\Rel, \beta}})$ fails to model $\Gamma_{\beta}$ then there are $x^* \in \chi^*$ and $\alpha' \in I_{\Log, \beta}$ such that $x$ witnesses the failure of $\Theta_{\alpha'}$.  Select a model  $(\chi^{**}, \{\beth_{\alpha}^{**}\}_{\alpha \in I_{\Rel,\beta}})$ of $\Gamma_{\beta}$ and point $x^{**}\in \chi^{**}$ such that for all $\alpha \in I_{\Rel, \beta}$ we have $x^{**} \in \beth_{\alpha}^{**}$ if and only if $x^* \in \beth_{\alpha}^*$.  Then $x^{**}$ witnesses the failure of $\Theta_{\alpha'}$ in $(\chi^{**}, \{\beth_{\alpha}^{**}\}_{\alpha \in I_{\Rel, \beta}})$, a contradiction.
\end{proof}

\begin{construction}  Suppose that $0 \leq \beta \leq \aleph_1$ and $\Gamma_{\beta}$ satisfies $\dagger$.  Consider the subset $\X_{\beta} \subseteq \{0, 1\}^{I_{\Rel, \beta}}$ given by $\sigma \in \X_{\beta}$ if and only if there exists a model $(\chi, \{\beth_{\alpha}\}_{\alpha \in I_{\Rel, \beta}})$ of $\Gamma_{\beta}$ and $x\in \chi$ such that $\sigma(\alpha) = 1 \Leftrightarrow x \in\beth_{\alpha}$.  Given $Y, Z \subseteq I_{\Rel, \beta}$ we let $\X_{\beta, Y, Z} = \{\sigma\in \X_{\beta} \mid \sigma^{-1}(0) \supseteq Y \text{ and }\sigma^{-1}(1) \supseteq Z \}$.  For each $\alpha \in I_{\Rel, \beta}$ we let $b_{\alpha}^{\beta} =  \X_{\beta, \emptyset, \{\alpha\}}$.
\end{construction}

\begin{lemma}\label{nicemodel} Let $\Gamma_{\beta}$ satisfy $\dagger$.  Then $\Gamma_{\beta}$ is consistent if and only if $(\X_{\beta}, \{b_{\alpha}^{\beta}\}_{\alpha \in I_{\Rel, \beta}})$ models $\Gamma_{\beta}$.
\end{lemma}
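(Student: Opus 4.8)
The plan is to prove the two implications separately. The ``if'' direction is immediate: if $(\X_{\beta}, \{b_{\alpha}^{\beta}\}_{\alpha\in I_{\Rel,\beta}})$ models $\Gamma_{\beta}$ then $\Gamma_{\beta}$ has a model and is therefore consistent. For the ``only if'' direction I would fix a model $(\chi, \{\beth_{\alpha}\}_{\alpha\in I_{\Rel,\beta}})$ of $\Gamma_{\beta}$ and argue that the canonical structure $(\X_{\beta}, \{b_{\alpha}^{\beta}\})$ --- whose elements are precisely the quantifier-free one-types over $\{\B_{\alpha}\}_{\alpha\in I_{\Rel,\beta}}$ that are realized in some model of $\Gamma_{\beta}$ --- must itself satisfy every $\Theta_{\alpha'} \in \Gamma_{\beta}$. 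The conceptual content is that each $\Theta_{\alpha'}$ is a finite conjunction of universal statements in one free variable together with existential nonemptiness assertions, and every such sentence is decided by the collection of realizable one-types.

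The first step is to check that $(\X_{\beta}, \{b_{\alpha}^{\beta}\})$ satisfies the conjunct $\B_{\alpha} \neq \emptyset$ for each $\alpha \in I_{\Rel,\beta}$. Inspecting the forms (a)--(d), the unique sentence $\Theta_{h(\alpha)}$ in which $\B_{\alpha}$ is first introduced always lists $\B_{\alpha} \neq \emptyset$ among its conjuncts, and $\Theta_{h(\alpha)} \in \Gamma_{\beta}$ since $h(\alpha) < \beta$; hence $\Gamma_{\beta} \models \B_{\alpha} \neq \emptyset$. Any point of $\beth_{\alpha}$ in the fixed model then realizes a type $\sigma \in \X_{\beta}$ with $\sigma(\alpha) = 1$, so $b_{\alpha}^{\beta} = \X_{\beta, \emptyset, \{\alpha\}} \neq \emptyset$.

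With nonemptiness established, and since $\Gamma_{\beta}$ satisfies $\dagger$, I would invoke Remark \ref{existentialproblems}: were $(\X_{\beta}, \{b_{\alpha}^{\beta}\})$ to fail to model $\Gamma_{\beta}$, some $\sigma \in \X_{\beta}$ would witness the failure of some $\Theta_{\alpha'}$ via one of the explicit membership patterns listed there (e.g.\ $\sigma \in b_{\alpha_0}^{\beta} \cap b_{\alpha_1}^{\beta}$ in case (a), and the analogous patterns in cases (b), (c), (d)). By definition of $\X_{\beta}$ there is a model $(\chi', \{\beth_{\alpha}'\})$ of $\Gamma_{\beta}$ and a point $x \in \chi'$ realizing $\sigma$, i.e.\ with $x \in \beth_{\alpha}' \Leftrightarrow \sigma(\alpha) = 1$ for all $\alpha \in I_{\Rel,\beta}$. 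Since every relation occurring in $\Theta_{\alpha'}$ has index $\le \alpha' < \beta$ and hence lies in $I_{\Rel,\beta}$, the point $x$ has exactly the same membership pattern as $\sigma$ in every relation relevant to $\Theta_{\alpha'}$, so $x$ witnesses the failure of $\Theta_{\alpha'}$ in $(\chi', \{\beth_{\alpha}'\})$, contradicting that this is a model of $\Gamma_{\beta}$.

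The only genuinely delicate points are bookkeeping. First, one must confirm by a short case analysis that each of (a)--(d) really does assert nonemptiness of every relation it newly introduces; this is what licenses the use of Remark \ref{existentialproblems}. Second, one must observe that a ``witness of failure'' is a purely local condition on a single element's membership pattern --- so that it transfers losslessly between $\X_{\beta}$ and any model realizing the offending type --- which in turn requires the (easy) verification that the relations mentioned in any $\Theta_{\alpha'} \in \Gamma_{\beta}$ all have indices in $I_{\Rel,\beta}$. I expect this second point, modest as it is, to be the step most in need of care; neither point presents a real obstacle.
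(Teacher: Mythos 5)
Your proposal is correct and follows essentially the same route as the paper: establish that each $b_{\alpha}^{\beta}$ is nonempty by pushing a point of $\beth_{\alpha}$ from a model through the coordinates, then use Remark \ref{existentialproblems} to extract a witnessing $\sigma \in \X_{\beta}$ from any hypothetical failure and transfer it back to a model realizing that type, yielding a contradiction. The only difference is presentational: you make explicit the (correct) observation that each sentence of type (a)--(d) asserts nonemptiness of every relation it newly introduces, a point the paper leaves implicit when it selects $x_{\alpha_0} \in \beth_{\alpha_0}$.
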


\begin{proof}  Certainly the reverse direction is clear.  For the forward direction we suppose that $\Gamma_{\beta}$ is consistent.  Let $(\chi, \{\beth_{\alpha}\}_{\alpha \in I_{\Rel, \beta}})$ be a model.  For each $\alpha_0 \in I_{\Rel, \beta}$ we select $x_{\alpha_0} \in \beth_{\alpha_0}$ and let $\sigma_{\alpha_0} \in X_{\beta}$ be such that $\sigma_{\alpha_0}(\alpha) = 1 \Leftrightarrow x_{\alpha_0}\in \beth_{\alpha}$.  For each $\alpha_0\in I_{\Rel, \beta}$ we see that $\sigma_{\alpha_0} \in b_{\alpha_0^{\beta}}$, and so $(\X_{\beta}, \{b_{\alpha}^{\beta}\}_{\alpha \in I_{\Rel, \beta}})$ models $\B_{\alpha_0} \neq \emptyset$ for each $\alpha_0 \in I_{\Rel, \beta}$.

Thus by Remark \ref{existentialproblems} we know that if $(\X_{\beta}, \{b_{\alpha}^{\beta}\}_{\alpha \in I_{\Rel, \beta}})$ fails to model $\Gamma_{\beta}$ then there is some point witnessing this.  For example, if $\Theta_{\alpha_1}$ is false and of type (a) then we select $\sigma \in \X_{\beta}$ for which $\sigma \in b_{\alpha_0}^{\beta} \cap b_{\alpha_1}^{\beta}$, in the notation of type (a).  Let $(\chi^*, \{\beth_{\alpha}^*\}_{\alpha \in I_{\Rel, \beta}})$ be a model with $x^* \in \chi^*$ such that $\sigma(\alpha) = 1 \Leftrightarrow  x^* \in \beth_{\alpha}^*$.  Then $x^*$ witnesses that $\Theta_{\alpha_1}$ fails in $(\chi^*, \{\beth_{\alpha}^*\}_{\alpha \in I_{\Rel, \beta}})$, a contradiction.  The comparable arguments are clear in types (b), (c), and (d).  Thus $(\X_{\beta}, \{b_{\alpha}^{\beta}\}_{\alpha \in I_{\Rel, \beta}})$ is indeed a model of $\Gamma_{\beta}$.
\end{proof}

\begin{remark}\label{emptylogic}  Let $\Gamma_{\beta}$ satisfy $\dagger$.  Notice that if $I_{\Log, \beta} = \emptyset$ then $\Gamma_{\beta} = \emptyset = I_{\Rel, \beta}$.  In particular we have $\X_{\beta} = \{0, 1\}^{\emptyset}$ consists of a single element, the empty function from $\emptyset$ to $\{0, 1\}$, and $\X_{\beta}$ is a nonempty model of $\Gamma_{\beta}$.

In case $\Gamma_{\beta}$ is inconsistent we have $I_{\Log, \beta} \neq \emptyset$ and $\X_{\beta} = \emptyset$.  If $\Gamma_{\beta}$ is consistent then either $\Gamma_{\beta} = \emptyset$, in which case $\X_{\beta}\neq \emptyset$ as we have seen, or $\Gamma_{\beta} \neq \emptyset$ in which case we are satisfying some sentence of type (a), (b), (c), or (d) and therefore $\X_{\beta} \neq \emptyset$.  Thus $\X_{\beta}$ is empty if and only if $\Gamma_{\beta}$ is inconsistent.
\end{remark}

\begin{lemma}\label{cutoff}  Let $\Gamma_{\beta}$ satisfy $\dagger$ and suppose that $\Gamma_{\beta}$ is consistent.  For each $\sigma \in \X_{\beta}$ and $\beta_0 \leq \beta$ we have $\sigma\upharpoonright I_{\Rel, \beta_0} \in \X_{\beta_0}$.
\end{lemma}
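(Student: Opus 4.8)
The plan is to observe that $\Gamma_{\beta_0}$ is a sub-theory of $\Gamma_{\beta}$ written in a sub-vocabulary, so that reducting any model of $\Gamma_{\beta}$ yields a model of $\Gamma_{\beta_0}$ carrying the same witnessing point.

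First I would record the containments that follow from $\beta_0 \leq \beta$. Since $h(\alpha) < \beta_0$ implies $h(\alpha) < \beta$ we have $I_{\Rel, \beta_0} \subseteq I_{\Rel, \beta}$; since $I_{\Log, \beta_0} = I_{\Log} \cap \beta_0 \subseteq I_{\Log} \cap \beta = I_{\Log, \beta}$ we have $\Gamma_{\beta_0} \subseteq \Gamma_{\beta}$. I would also note, so that $\X_{\beta_0}$ is legitimately defined, that $\Gamma_{\beta_0}$ still satisfies $\dagger$: every $\Theta_{\alpha} \in \Gamma_{\beta_0}$ has index $\alpha < \beta_0$, and inspection of clauses (a)--(d) shows that the order conditions attached to $\Theta_{\alpha}$ refer only to sets of the form $I_{\Log} \cap \gamma$ with $\gamma \leq \alpha < \beta_0$, which are unchanged upon replacing $I_{\Log}$ by $I_{\Log, \beta_0}$; hence these conditions hold in $\Gamma_{\beta_0}$ because they hold in $\Gamma_{\beta}$. (This is also transparent from the construction in Section \ref{construction}.)

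The key point is that $\Gamma_{\beta_0}$ mentions only the relation symbols $\{\B_{\alpha}\}_{\alpha \in I_{\Rel, \beta_0}}$: if $\B_{\alpha'}$ occurs in some $\Theta_{\alpha}$ with $\alpha \in I_{\Log, \beta_0}$, then by the definition of $h$ we get $h(\alpha') \leq \alpha < \beta_0$, so $\alpha' \in I_{\Rel, \beta_0}$. With this in hand the argument is short. Fix $\sigma \in \X_{\beta}$; by definition there are a model $(\chi, \{\beth_{\alpha}\}_{\alpha \in I_{\Rel, \beta}})$ of $\Gamma_{\beta}$ and a point $x \in \chi$ with $\sigma(\alpha) = 1 \Leftrightarrow x \in \beth_{\alpha}$ for all $\alpha \in I_{\Rel, \beta}$. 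Passing to the reduct $(\chi, \{\beth_{\alpha}\}_{\alpha \in I_{\Rel, \beta_0}})$ and using that satisfaction of a first-order sentence depends only on the interpretations of the symbols occurring in it, together with the previous two observations, this reduct is a model of $\Gamma_{\beta_0}$. Since $(\sigma \upharpoonright I_{\Rel, \beta_0})(\alpha) = 1 \Leftrightarrow x \in \beth_{\alpha}$ for all $\alpha \in I_{\Rel, \beta_0}$, the point $x$ witnesses $\sigma \upharpoonright I_{\Rel, \beta_0} \in \X_{\beta_0}$.

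I do not anticipate a serious obstacle: the content is essentially the bookkeeping with the index sets, and the one spot deserving care is verifying that $\Gamma_{\beta_0}$ is a theory over the vocabulary $\{\B_{\alpha}\}_{\alpha \in I_{\Rel, \beta_0}}$ (and that $\dagger$ is inherited), after which reduction of models is completely routine. The consistency hypothesis on $\Gamma_{\beta}$ is used only to ensure, via Remark \ref{emptylogic}, that $\X_{\beta}$ is nonempty so the statement is not vacuous; it is otherwise unused.
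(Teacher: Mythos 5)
Your proof is correct and is essentially the paper's own argument: the paper likewise fixes a model and witnessing point for $\sigma$, passes to the reduct over $I_{\Rel, \beta_0}$, observes it models $\Gamma_{\beta_0}$, and concludes that the same point witnesses $\sigma \upharpoonright I_{\Rel, \beta_0} \in \X_{\beta_0}$. Your additional bookkeeping (that $\Gamma_{\beta_0}$ only mentions symbols indexed by $I_{\Rel, \beta_0}$ and that $\dagger$ is inherited) is left implicit in the paper but is exactly the right justification.
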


\begin{proof}  Given $\sigma \in \X_{\beta}$ we select a model $(\chi, \{\beth_{\alpha}\}_{\alpha \in I_{\Rel, \beta}})$ and $x\in \chi$ such that $x\in \beth_{\alpha} \Leftrightarrow \sigma(\alpha) = 1$.  Notice that the reduct $(\chi, \{\beth_{\alpha}\}_{\alpha \in I_{\Rel, \beta_0}})$ models $\Gamma_{\beta_0}$, and so $\sigma\upharpoonright I_{\Rel, \beta} \in \X_{\beta_0}$ follows.
\end{proof}

\begin{lemma}\label{nextstagea}  Let $\Gamma_{\beta}$ satisfy $\dagger$ and suppose that $\beta_0 \in I_{\Log, \beta}$ and that $\Theta_{\beta_0}$ is of type (a).  Then $\Gamma_{\beta_0 + 1}$ is consistent if and only if $\Gamma_{\beta_0}$ is, and in either case we have $$\X_{\beta_0 + 1} = \{(\sigma, i, j) \in \X_{\beta_0} \times \{0, 1\}^{I_{\Rel, \beta_0 + 1} \setminus I_{\Rel, \beta_0}}\mid i = 0 \vee j = 0\}$$
\end{lemma}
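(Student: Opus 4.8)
The plan is to prove both assertions at once by exhibiting explicit models, relying on Lemmas~\ref{nicemodel} and~\ref{cutoff} and on Remark~\ref{emptylogic}. First note the shape of the situation. Since $\Theta_{\beta_0}$ is of type (a) it reads $[\B_{\alpha_0}\cap\B_{\alpha_1}=\emptyset]\wedge[\B_{\alpha_0}\neq\emptyset]\wedge[\B_{\alpha_1}\neq\emptyset]$ with $\alpha_1=\alpha_0+1=\beta_0$, and it is the sentence in which both $\B_{\alpha_0}$ and $\B_{\alpha_1}$ first occur; as a type (a) sentence introduces no relation other than these two, we have $I_{\Rel,\beta_0+1}\setminus I_{\Rel,\beta_0}=\{\alpha_0,\alpha_1\}$. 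Throughout I identify $\{0,1\}^{I_{\Rel,\beta_0+1}}$ with $\{0,1\}^{I_{\Rel,\beta_0}}\times\{0,1\}^{\{\alpha_0,\alpha_1\}}$ and write a typical element as $(\sigma,i,j)$, where $i$ records the value at $\alpha_0$ and $j$ the value at $\alpha_1$. Both $\Gamma_{\beta_0}$ and $\Gamma_{\beta_0+1}=\Gamma_{\beta_0}\cup\{\Theta_{\beta_0}\}$ satisfy $\dagger$, being initial segments of $\Gamma_\beta$. Write $T$ for the set on the right-hand side of the displayed equation.

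\emph{The inclusion $\X_{\beta_0+1}\subseteq T$ and the easy half of the consistency equivalence.} A key observation, to be checked by inspecting clauses (a)--(d) of $\dagger$ together with the inequality $\alpha\le h(\alpha)\le\alpha+1$, is that every relation symbol occurring in a sentence $\Theta_\alpha$ with $\alpha\in I_{\Log,\beta_0}$ has index in $I_{\Rel,\beta_0}$ (indeed its $h$-value is $\le\alpha<\beta_0$). Consequently, forgetting the interpretations of $\B_{\alpha_0},\B_{\alpha_1}$ turns any model of $\Gamma_{\beta_0+1}$ into a model of $\Gamma_{\beta_0}$; in particular $\Gamma_{\beta_0+1}$ consistent implies $\Gamma_{\beta_0}$ consistent. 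Likewise, if $\sigma'\in\X_{\beta_0+1}$ is witnessed by a model $(\chi,\{\beth_\gamma\}_{\gamma\in I_{\Rel,\beta_0+1}})$ and a point $x$, then passing to this reduct (or directly Lemma~\ref{cutoff}) shows $\sigma:=\sigma'\upharpoonright I_{\Rel,\beta_0}\in\X_{\beta_0}$, while $\chi\models\B_{\alpha_0}\cap\B_{\alpha_1}=\emptyset$ forbids $x\in\beth_{\alpha_0}\cap\beth_{\alpha_1}$, i.e. $\sigma'(\alpha_0)=0$ or $\sigma'(\alpha_1)=0$. Hence $\sigma'\in T$.

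\emph{The inclusion $T\subseteq\X_{\beta_0+1}$ and the hard half of the consistency equivalence.} Assume $\Gamma_{\beta_0}$ is consistent. By Lemma~\ref{nicemodel}, $(\X_{\beta_0},\{b^{\beta_0}_\gamma\}_{\gamma\in I_{\Rel,\beta_0}})$ models $\Gamma_{\beta_0}$. Put a structure on $T$: let $\pi\colon T\to\X_{\beta_0}$ be $(\sigma,i,j)\mapsto\sigma$; interpret $\B_\gamma$ as $\pi^{-1}(b^{\beta_0}_\gamma)$ for $\gamma\in I_{\Rel,\beta_0}$, interpret $\B_{\alpha_0}$ as $\{(\sigma,i,j)\in T: i=1\}$ and $\B_{\alpha_1}$ as $\{(\sigma,i,j)\in T: j=1\}$. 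Then $\Theta_{\beta_0}$ holds: the last two interpretations are disjoint since $i=j=1$ is excluded from $T$, and each is nonempty because $\X_{\beta_0}\neq\emptyset$ (pick $\sigma$ and use $(\sigma,1,0)$, resp. $(\sigma,0,1)$). Each $\Theta_\alpha$ with $\alpha\in I_{\Log,\beta_0}$ holds as well: it involves only relation symbols with indices in $I_{\Rel,\beta_0}$, the map $\pi$ is onto (every $(\sigma,0,0)$ lies in $T$) and satisfies $\B_\gamma^{T}=\pi^{-1}(\B_\gamma^{\X_{\beta_0}})$ for those $\gamma$, and $\Theta_\alpha$ is a conjunction of $\forall$- and $\exists$-statements whose matrices are quantifier-free in the unary relation symbols --- for such a sentence $T\models\phi(y)$ iff $\X_{\beta_0}\models\phi(\pi(y))$ for each $y\in T$, and surjectivity then gives that $\Theta_\alpha$ holds in $T$ because it holds in $\X_{\beta_0}$. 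Thus $T$ with these interpretations models $\Gamma_{\beta_0+1}$, so $\Gamma_{\beta_0+1}$ is consistent, and in this model the point $(\sigma,i,j)\in T$ lies in exactly the relations $\B_\gamma$ with $\gamma\in I_{\Rel,\beta_0}$, $\sigma(\gamma)=1$, in $\B_{\alpha_0}$ iff $i=1$, and in $\B_{\alpha_1}$ iff $j=1$; that is, it realizes precisely the function $(\sigma,i,j)$. Hence $(\sigma,i,j)\in\X_{\beta_0+1}$, giving $T\subseteq\X_{\beta_0+1}$.

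Finally, when $\Gamma_{\beta_0}$ is inconsistent we have $\X_{\beta_0}=\emptyset$ by Remark~\ref{emptylogic}, so $T=\emptyset$; and $\Gamma_{\beta_0+1}$ is inconsistent too by the reduct above, so $\X_{\beta_0+1}=\emptyset=T$. Combining the cases gives $\X_{\beta_0+1}=T$ and the consistency equivalence. The one place demanding care is the bookkeeping claim that the relation symbols of $\Theta_\alpha$ for $\alpha<\beta_0$ all lie in $I_{\Rel,\beta_0}$, since it underpins both the reduct argument and the verification that $\pi$ respects $\Gamma_{\beta_0}$; everything else is substitution into the definitions of $\X_\beta$ and $b^\beta_\gamma$ together with the elementary transfer of such first-order sentences along a surjective, relation-preserving map.
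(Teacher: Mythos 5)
Your proposal is correct and follows essentially the same route as the paper: both verify the inclusion $\X_{\beta_0+1}\subseteq T$ via Lemma~\ref{cutoff} (plus the disjointness clause of $\Theta_{\beta_0}$), and both obtain the reverse inclusion and consistency by equipping the right-hand set itself with the pulled-back relations $\{\rho : \rho(\gamma)=1\}$ and checking it models $\Gamma_{\beta_0+1}$, using Remark~\ref{emptylogic} for the inconsistent case. Your explicit bookkeeping that all relation symbols of $\Gamma_{\beta_0}$ lie in $I_{\Rel,\beta_0}$ and the transfer of sentences along the surjection $\pi$ is just a slightly more spelled-out version of the paper's witness-restriction argument.
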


\begin{proof}  Let $J_a$ denote the set on the right-hand-side.  If $\Gamma_{\beta_0}$ is inconsistent then $\X_{\beta_0 + 1} = \emptyset = \X_{\beta_0}$ by Remark \ref{emptylogic} and clearly $J_a = \emptyset$ as well.  If $\Gamma_{\beta_0}$ is consistent then $\X_{\beta_0} \neq \emptyset$.  For each $\alpha \in I_{\Rel, \beta_0 + 1}$ we let $\beth_{\alpha} \subseteq J_a$ be the set $\{\rho \in J_a \mid \rho(\alpha) = 1\}$.

We claim that $(J_a, \{\beth_{\alpha}\}_{\alpha \in I_{\Rel, \beta_0 + 1}})$ is a model of $\Gamma_{\beta_0 + 1}$.  To see this, first notice that $\B_{\alpha} \neq \emptyset$ holds for all $\alpha \in I_{\Rel, \beta_0 + 1}$, for if $\alpha \in I_{\beta_0}$ we have some $\sigma \in b_{\alpha}^{\beta_0}$ and $(\sigma, 0, 0) \in \beth_{\alpha}$, and for $\alpha \in I_{\Rel, \beta_0 + 1} \setminus I_{\Rel, \beta_0}$ we take $\sigma \in \X_{\beta_0}$ (since this is nonempty) and notice that $(\sigma, 1, 0) \in \beth_{\alpha_0}$ and $(\sigma, 0, 1) \in \beth_{\alpha_1}$.  Next, it is clear that $J_a$ models $\Theta_{\beta_0}$ by how it is defined.  If $J_a$ fails to model some $\Theta_{\alpha}$ for $\alpha \in I_{\Rel, \beta_0}$ then we have some $\rho \in J_a$ as in Remark \ref{existentialproblems}, and it is easy to see that the restriction $\rho \upharpoonright I_{\Rel, \beta_0}$ witnesses that $\Theta_{\alpha}$ is false in $\X_{\beta_0}$, a contradiction.

Since $(J_a, \{\beth_{\alpha}\}_{\alpha \in I_{\Rel, \beta_0 + 1}})$ is a model of $\Gamma_{\beta_0 + 1}$ we know in particular that $\Gamma_{\beta_0 + 1}$ is consistent.  Thus by Lemma \ref{cutoff} it is clear that $\X_{\beta_0 + 1} \subseteq J_a$.  As $(J_a, \{\beth_{\alpha}\}_{\alpha \in I_{\Rel, \beta_0 + 1}})$ is a model of $\Gamma_{\beta_0 + 1}$ we have in fact $J_a \subseteq \X_{\beta_0 + 1}$ (by how $\X_{\beta_0 + 1}$ is defined) and so $J_a = \X_{\beta_0 + 1}$.
\end{proof}

\begin{lemma}\label{nextstageb}  Let $\Gamma_{\beta}$ satisfy $\dagger$ and suppose that $\beta_0 \in I_{\Log, \beta}$ and that $\Theta_{\beta_0}$ is of type (b).  Then $\Gamma_{\beta_0 + 1}$ is consistent if and only if both $\Gamma_{\beta_0}$ is consistent and $b_{\alpha_0}^{\beta_0} \cap b_{\alpha_1}^{\beta_0} \neq \emptyset$.  If $\Gamma_{\beta_0 + 1}$ is consistent then $$\X_{\beta_0 + 1} = \{(\sigma, i)\in \X_{\beta_0} \times \{0, 1\}^{I_{\Rel, \beta_0 + 1} \setminus I_{\Rel, \beta_0}}\mid i = 1 \Rightarrow [\sigma(\alpha_0) = \sigma(\alpha_1) = 1]\}$$
\end{lemma}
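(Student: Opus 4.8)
The plan is to mimic the proof of Lemma~\ref{nextstagea}. First some bookkeeping: in the notation of clause~(b) we have $\alpha_2=\beta_0$ (the index of the sentence equals the index of the newly introduced relation), so $I_{\Rel,\beta_0+1}\setminus I_{\Rel,\beta_0}=\{\beta_0\}$ and the ``$i$'' in the displayed equality is the value of a function at $\beta_0=\alpha_2$. Also $I_{\Log,\beta_0+1}=I_{\Log,\beta_0}\cup\{\beta_0\}$, so $\Gamma_{\beta_0+1}=\Gamma_{\beta_0}\cup\{\Theta_{\beta_0}\}$, and the relations $\B_{\alpha_0},\B_{\alpha_1}$ occurring in $\Theta_{\beta_0}$ satisfy $h(\alpha_0),h(\alpha_1)<\beta_0$, hence $\alpha_0,\alpha_1\in I_{\Rel,\beta_0}$, so $b_{\alpha_0}^{\beta_0}$ and $b_{\alpha_1}^{\beta_0}$ are defined. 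If $\Gamma_{\beta_0}$ is inconsistent then $\Gamma_{\beta_0+1}\supseteq\Gamma_{\beta_0}$ is inconsistent, so both sides of the asserted equivalence are false and there is nothing to prove; assume henceforth $\Gamma_{\beta_0}$ is consistent, so that $(\X_{\beta_0},\{b_\alpha^{\beta_0}\}_{\alpha\in I_{\Rel,\beta_0}})$ models $\Gamma_{\beta_0}$ by Lemma~\ref{nicemodel}.

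For the forward direction of the equivalence, suppose $(\chi,\{\beth_\alpha\}_{\alpha\in I_{\Rel,\beta_0+1}})$ models $\Gamma_{\beta_0+1}$. Its reduct to $I_{\Rel,\beta_0}$ models $\Gamma_{\beta_0}$, and since $\B_{\alpha_2}\neq\emptyset$ we may pick $x\in\beth_{\alpha_2}$; then the clause $\B_{\alpha_0}\cap\B_{\alpha_1}\supseteq\B_{\alpha_2}$ forces $x\in\beth_{\alpha_0}\cap\beth_{\alpha_1}$. The characteristic function $\sigma$ of $x$ over $I_{\Rel,\beta_0}$ then lies in $\X_{\beta_0}$ and has $\sigma(\alpha_0)=\sigma(\alpha_1)=1$, so $\sigma\in b_{\alpha_0}^{\beta_0}\cap b_{\alpha_1}^{\beta_0}$, which is therefore nonempty.

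Conversely, assume also $b_{\alpha_0}^{\beta_0}\cap b_{\alpha_1}^{\beta_0}\neq\emptyset$, let $J_b$ denote the right-hand set, and (identifying each $(\sigma,i)\in J_b$ with the corresponding function on $I_{\Rel,\beta_0+1}$) put $\beth_\alpha=\{\rho\in J_b\mid\rho(\alpha)=1\}$. I would check that $(J_b,\{\beth_\alpha\}_{\alpha\in I_{\Rel,\beta_0+1}})$ models $\Gamma_{\beta_0+1}$. First, $\B_\alpha\neq\emptyset$ for every $\alpha$: for $\alpha\in I_{\Rel,\beta_0}$ pick $\sigma\in b_\alpha^{\beta_0}$ and note $(\sigma,0)\in\beth_\alpha$, while for $\alpha=\alpha_2$ pick $\sigma\in b_{\alpha_0}^{\beta_0}\cap b_{\alpha_1}^{\beta_0}$ and note $(\sigma,1)\in\beth_{\alpha_2}$. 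Second, $\Theta_{\beta_0}$ holds by the very definition of $J_b$: if $\rho(\alpha_2)=1$ then $\rho=(\sigma,1)$ with $\sigma(\alpha_0)=\sigma(\alpha_1)=1$, so $\rho\in\beth_{\alpha_0}\cap\beth_{\alpha_1}$. Third, for each $\alpha\in I_{\Log,\beta_0}$, a failure of $\Theta_\alpha$ in $(J_b,\{\beth_\alpha\})$ would by Remark~\ref{existentialproblems} be witnessed by some $\rho\in J_b$; since $\Theta_\alpha$ mentions only relations with indices in $I_{\Rel,\beta_0}$, the restriction $\rho\upharpoonright I_{\Rel,\beta_0}\in\X_{\beta_0}$ would witness a failure of $\Theta_\alpha$ in $(\X_{\beta_0},\{b_\alpha^{\beta_0}\})$, contradicting Lemma~\ref{nicemodel}. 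Hence $\Gamma_{\beta_0+1}$ is consistent, and then $J_b\subseteq\X_{\beta_0+1}$ is immediate by taking each $\rho\in J_b$ as the point $x$ in the model just built. For the reverse inclusion, given $\tau\in\X_{\beta_0+1}$ with a witnessing model $(\chi,\{\beth_\alpha\})$ and point $x$, Lemma~\ref{cutoff} gives $\tau\upharpoonright I_{\Rel,\beta_0}\in\X_{\beta_0}$; and if $\tau(\alpha_2)=1$ then $x\in\beth_{\alpha_2}\subseteq\beth_{\alpha_0}\cap\beth_{\alpha_1}$ forces $\tau(\alpha_0)=\tau(\alpha_1)=1$, so $\tau\in J_b$. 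Thus $\X_{\beta_0+1}=J_b$.

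The only step requiring genuine attention — and the mild obstacle — is the third bullet above: confirming that every $\Theta_\alpha$ with $\alpha\in I_{\Log,\beta_0}$ involves only relation symbols whose indices lie in $I_{\Rel,\beta_0}$. This is what licenses transporting a putative witness down to $\X_{\beta_0}$, and it follows by inspecting the index inequalities in each of clauses~(a)--(d) together with the bound $\alpha\le h(\alpha)\le\alpha+1$. Everything else is the same routine verification as in Lemma~\ref{nextstagea}.
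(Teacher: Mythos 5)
Your proposal is correct and follows essentially the same route as the paper: the forward direction extracts a point of $\beth_{\alpha_2}$ from a model of $\Gamma_{\beta_0+1}$ and restricts its characteristic function to land in $b_{\alpha_0}^{\beta_0}\cap b_{\alpha_1}^{\beta_0}$ (the paper does this with the canonical model $\X_{\beta_0+1}$ via Lemmas \ref{nicemodel} and \ref{cutoff}, you with an arbitrary model and its reduct, which is equivalent), and the reverse direction builds the same explicit structure $J_b$, verifies it models $\Gamma_{\beta_0+1}$ by checking nonemptiness, $\Theta_{\beta_0}$, and transporting any witness of failure of an earlier sentence down to $\X_{\beta_0}$, then concludes $J_b=\X_{\beta_0+1}$ by the two inclusions. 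No gaps.
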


\begin{proof}  Suppose $\Gamma_{\beta_0 + 1}$ is consistent.  Then certainly $\Gamma_{\beta_0}$ must be consistent as well.  Suppose for contradiction that $b_{\alpha_0}^{\beta_0} \cap b_{\alpha_1}^{\beta_0} = \emptyset$.  We have $(\X_{\beta_0 + 1}, \{b_{\alpha}^{\beta_0 + 1}\}_{\alpha \in I_{\Rel, \beta_0 + 1}})$ as a model of $\Gamma_{\beta_0 + 1}$ by Lemma \ref{nicemodel}.   As $\Theta_{\beta_0}$ asserts that $\B_{\alpha_2} \neq \emptyset$, we have some $\sigma \in b_{\alpha_2}^{\beta_0 + 1} \subseteq b_{\alpha_0}^{\beta_0 + 1} \cap b_{\alpha_1}^{\beta_0 + 1}$, but now $\sigma \upharpoonright I_{\Rel, \beta_0} \in \X_{\beta_0}$ by Lemma \ref{cutoff} and this is an element of $b_{\alpha_0}^{\beta_0} \cap b_{\alpha_1}^{\beta_0}$, a contradiction.  Thus the forward directin of the first sentence is established.

Now suppose that $\Gamma_{\beta_0}$ is consistent and that $b_{\alpha_0}^{\beta_0} \cap b_{\alpha_1}^{\beta_0} \neq \emptyset$.  We let $J_b$ denote the set $\{(\sigma, i)\in \X_{\beta_0} \times \{0, 1\}^{I_{\Rel, \beta_0 + 1} \setminus I_{\Rel, \beta_0}}\mid i = 1 \Rightarrow [\sigma(\alpha_0) = \sigma(\alpha_1) = 1]\}$ and $\beth_{\alpha} = \{\rho \in J_a \mid \rho(\alpha) = 1\}$ for each $\alpha \in I_{\Rel, \beta_0 + 1}$.  We claim that $(J_b, \{\beth_{\alpha}\}_{\alpha \in I_{\Rel, \beta_0 + 1}})$ is a model of $\Gamma_{\beta_0 + 1}$.  That $\B_{\alpha} \neq \emptyset$ is satisfied for $\alpha \in I_{\Rel, \beta_0}$ is quite clear.  Since $b_{\alpha_0}^{\beta_0} \cap b_{\alpha_1}^{\beta_0} \neq \emptyset$ we select $\sigma$ in this intersection and notice that $(\sigma, 1, 1) \in \beth_{\alpha_0} \cap \beth_{\alpha_1}$, so $\B_{\beta_0} \neq \emptyset$ holds as well.  If $\Gamma_{\beta_0 + 1}$ fails in $(J_b, \{\beth_{\alpha}\}_{\alpha \in I_{\Rel, \beta_0 + 1}})$ then by Remark \ref{existentialproblems} there is a $\rho \in J_b$ witnessing this.  Arguing as in Lemma \ref{nextstagea} we obtain a contradiction.  Thus $(J_b, \{\beth_{\alpha}\}_{\alpha \in I_{\Rel, \beta_0 + 1}}) \models \Gamma_{\beta_0 + 1}$.

Now we have established that $\Gamma_{\beta_0 + 1}$ is consistent (the reverse direction of the first sentence).  We also notice by Lemma \ref{cutoff} that $\X_{\beta_0 + 1} \subseteq J_b$.  As $(J_b, \{\beth_{\alpha}\}_{\alpha \in I_{\Rel, \beta_0 + 1}})$ is a model of $\Gamma_{\beta_0 + 1}$ we have in fact $J_b \subseteq \X_{\beta_0 + 1}$, by how $\X_{\beta}$ is defined, and so $J_b = \X_{\beta_0 + 1}$.
\end{proof}

\begin{lemma}\label{nextstagec}  Let $\Gamma_{\beta}$ satisfy $\dagger$ and suppose that $\beta_0 \in I_{\Log, \beta}$ and that $\Theta_{\beta_0}$ is of type (c).  Then $\Gamma_{\beta_0 + 1}$ is consistent if and only if $\Gamma_{\beta_0}$ is, and in either case we have 

\begin{center}
$\X_{\beta_0 + 1} = \{(\sigma, i, j)\in \X_{\beta_0} \times \{0, 1\}^{I_{\Rel, \beta_0 + 1} \setminus I_{\Rel, \beta_0}}\mid [\sigma(\alpha_2) = 1 \vee i = 1 \vee j = 1] \wedge [i = j = 1 \Rightarrow \sigma(\alpha_2) = 1]\}$
\end{center}

\end{lemma}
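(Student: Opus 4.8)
The plan is to follow the proofs of Lemmas \ref{nextstagea} and \ref{nextstageb} almost verbatim. Write $\beta_0 = \alpha_1 = \alpha_0 + 1$ as in the description of type (c), and recall that there the newly introduced relations are exactly $\B_{\alpha_0}$ and $\B_{\alpha_1}$, while $\B_{\alpha_2}$ (with $\alpha_2 < \alpha_0$) has already appeared in some $\Theta_{\alpha'}$ with $\alpha' < \alpha_0 < \beta_0$; hence $I_{\Rel, \beta_0 + 1} \setminus I_{\Rel, \beta_0} = \{\alpha_0, \alpha_1\}$ and $\alpha_2 \in I_{\Rel, \beta_0}$. Let $J_c$ denote the set displayed on the right-hand side, with the coordinate $i$ recording membership in $\B_{\alpha_0}$ and $j$ membership in $\B_{\alpha_1}$. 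If $\Gamma_{\beta_0}$ is inconsistent then $\X_{\beta_0} = \emptyset$ by Remark \ref{emptylogic}, so $J_c = \emptyset$; moreover $\Gamma_{\beta_0+1} \supseteq \Gamma_{\beta_0}$ is also inconsistent, so $\X_{\beta_0+1} = \emptyset = J_c$. From now on assume $\Gamma_{\beta_0}$ is consistent, whence $\X_{\beta_0} \neq \emptyset$.

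For each $\alpha \in I_{\Rel, \beta_0+1}$ set $\beth_\alpha = \{\rho \in J_c : \rho(\alpha) = 1\}$. The heart of the argument is to verify that $(J_c, \{\beth_\alpha\}_{\alpha \in I_{\Rel, \beta_0+1}})$ models $\Gamma_{\beta_0+1}$. First one checks $\B_\alpha \neq \emptyset$ for every $\alpha \in I_{\Rel,\beta_0+1}$: for $\alpha \in I_{\Rel,\beta_0}$ pick $\sigma \in b_\alpha^{\beta_0}$ (nonempty by Lemma \ref{nicemodel}) and observe $(\sigma,1,0) \in \beth_\alpha$; for $\alpha_0$ take $(\sigma,1,0)$ and for $\alpha_1$ take $(\sigma,0,1)$ with $\sigma \in \X_{\beta_0}$ arbitrary. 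Next, $\Theta_{\beta_0}$ holds in $(J_c,\{\beth_\alpha\})$ essentially by the definition of $J_c$: the clause $[i = j = 1 \Rightarrow \sigma(\alpha_2) = 1]$ gives $\B_{\alpha_0} \cap \B_{\alpha_1} \subseteq \B_{\alpha_2}$, and $[\sigma(\alpha_2) = 1 \vee i = 1 \vee j = 1]$ gives $(\forall x)[x \in \B_{\alpha_0} \cup \B_{\alpha_1} \cup \B_{\alpha_2}]$, while $\B_{\alpha_0} \neq \emptyset$ and $\B_{\alpha_1} \neq \emptyset$ have just been noted. Finally, for $\alpha \in I_{\Log,\beta_0}$ the sentence $\Theta_\alpha$ mentions only relations indexed in $I_{\Rel,\beta_0}$ (any $\B_\gamma$ occurring in $\Theta_\alpha$ has $h(\gamma) \le \alpha < \beta_0$), so if it failed in $(J_c,\{\beth_\alpha\})$ then --- Remark \ref{existentialproblems} applying since all $\B_\alpha \neq \emptyset$ --- some $\rho \in J_c$ would witness the failure, and then $\rho \upharpoonright I_{\Rel,\beta_0}$, which lies in $\X_{\beta_0}$ by the definition of $J_c$, would witness the failure of $\Theta_\alpha$ in $(\X_{\beta_0}, \{b_\alpha^{\beta_0}\})$, contradicting Lemma \ref{nicemodel}. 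Thus $(J_c,\{\beth_\alpha\}) \models \Gamma_{\beta_0+1}$; in particular $\Gamma_{\beta_0+1}$ is consistent, which, together with the trivial implication $\Gamma_{\beta_0+1}$ consistent $\Rightarrow \Gamma_{\beta_0}$ consistent (as $\Gamma_{\beta_0}\subseteq\Gamma_{\beta_0+1}$), proves the first assertion.

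It remains to prove $\X_{\beta_0+1} = J_c$ under the assumption that $\Gamma_{\beta_0+1}$ is consistent. The inclusion $J_c \subseteq \X_{\beta_0+1}$ is immediate: each $\rho \in J_c$ is witnessed to lie in $\X_{\beta_0+1}$ by the model $(J_c,\{\beth_\alpha\})$ and the point $\rho$. For $\X_{\beta_0+1} \subseteq J_c$, let $\sigma' \in \X_{\beta_0+1}$ be witnessed by a model $(\chi,\{\beth'_\alpha\})$ of $\Gamma_{\beta_0+1}$ and a point $x$ with $x \in \beth'_\alpha \Leftrightarrow \sigma'(\alpha) = 1$; applying $\Theta_{\beta_0}$ to $x$ gives $\sigma'(\alpha_0) = 1 \vee \sigma'(\alpha_1) = 1 \vee \sigma'(\alpha_2) = 1$ and $[\sigma'(\alpha_0) = \sigma'(\alpha_1) = 1 \Rightarrow \sigma'(\alpha_2) = 1]$, while $\sigma' \upharpoonright I_{\Rel,\beta_0} \in \X_{\beta_0}$ by Lemma \ref{cutoff}; hence $\sigma' \in J_c$. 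I expect the only genuine difficulty to be the bookkeeping --- keeping straight that the two freshly introduced relations in a type (c) sentence are $\B_{\alpha_0}$ and $\B_{\alpha_1}$ whereas $\B_{\alpha_2}$ is already present (so $\sigma(\alpha_2)$ is read off the $\X_{\beta_0}$-part of a triple, not from $i$ or $j$), and checking that Remark \ref{existentialproblems} is invoked legitimately; beyond that, the proof is a near-copy of Lemmas \ref{nextstagea} and \ref{nextstageb}.
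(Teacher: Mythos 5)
Your proposal is correct and follows essentially the same route as the paper: handle the inconsistent case via Remark \ref{emptylogic}, show $(J_c, \{\beth_\alpha\})$ is a model of $\Gamma_{\beta_0+1}$ by checking nonemptiness of each relation, observing $\Theta_{\beta_0}$ holds by the definition of $J_c$, and transferring any witness of a failed earlier $\Theta_\alpha$ down to $\X_{\beta_0}$ via Remark \ref{existentialproblems}, then conclude $J_c = \X_{\beta_0+1}$ from the definition of $\X_{\beta_0+1}$ and Lemma \ref{cutoff}. The only cosmetic difference is that the paper witnesses $\beth_{\alpha_0}, \beth_{\alpha_1} \neq \emptyset$ with a single point $(\sigma,1,1)$ for $\sigma \in b_{\alpha_2}^{\beta_0}$ while you use $(\sigma,1,0)$ and $(\sigma,0,1)$ for arbitrary $\sigma \in \X_{\beta_0}$; both are valid.
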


\begin{proof}  Let $J_c$ denote the set on the right-hand-side.  If $\Gamma_{\beta_0}$ is inconsistent then $\X_{\beta_0 + 1} = \emptyset = \X_{\beta_0}$ and so $J_c = \emptyset$.  If $\Gamma_{\beta_0}$ is consistent then $\X_{\beta_0} \neq \emptyset$.  Define $\beth_{\alpha} = \{\rho \in J_c \mid \rho(\alpha) = 1\}$.  We claim that $(J_c, \{\beth_{\alpha}\}_{\alpha \in I_{\Rel, \beta_0 + 1}}) \models \Gamma_{\beta_0 + 1}$.  For each $\alpha \in I_{\Rel, \beta_0}$ we have $(J_c, \{\beth_{\alpha}\}_{\alpha \in I_{\Rel, \beta_0 + 1}}) \models \B_{\alpha} \neq \emptyset$, since we select $\sigma \in b_{\alpha}^{\beta_0}$ and notice that $(\sigma, 1, 0) \in \beth_{\alpha}$.  For $\alpha_0, \alpha_1\in I_{\beta_0 + 1} \setminus I_{\beta_0}$ we select $\sigma \in b_{\alpha_2}$ and see that $(\sigma, 1, 1) \in \beth_{\alpha_0} \cap \beth_{\alpha_1}$.  Thus $(J_c, \{\beth_{\alpha}\}_{\alpha \in I_{\Rel, \beta_0 + 1}}) \models \B_{\alpha} \neq \emptyset$ for all $\alpha \in I_{\Rel, \beta_0 + 1}$.

Certainly $(J_c, \{\beth_{\alpha}\}_{\alpha \in I_{\Rel, \beta_0 + 1}}) \models \Theta_{\beta_0}$, simply by how $J_c$ is defined, and in case an earlier $\Theta_{\alpha}$ fails we obtain a witness $\rho$ and argue as in Lemma \ref{nextstagea}.  Thus $(J_c, \{\beth_{\alpha}\}_{\alpha \in I_{\Rel, \beta_0 + 1}}) \models \Gamma_{\beta_0 + 1}$ and $J_c = \X_{\beta_0 + 1}$ follows.
\end{proof}

\begin{lemma}\label{nextstaged}  Let $\Gamma_{\beta}$ satisfy $\dagger$ and suppose that $\beta_0 \in I_{\Log, \beta}$ and that $\Theta_{\beta_0}$ is of type (d).  Then $\Gamma_{\beta_0 + 1}$ is consistent if and only if $\Gamma_{\beta_0}$ is, and in either case we have 

\begin{center}
$\X_{\beta_0 + 1} = \{(\sigma, i, j)\in \X_{\beta_0} \times \{0, 1\}^{I_{\Rel, \beta_0 + 1} \setminus I_{\Rel, \beta_0}}\mid [\sigma(\alpha_2) = 1 \vee j = 1] \wedge [i = 1 \Rightarrow [\sigma(\alpha_2) = 1 \wedge j = 0]]\}$
\end{center}

\end{lemma}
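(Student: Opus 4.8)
The plan is to follow the template established in Lemmas \ref{nextstagea}, \ref{nextstageb} and \ref{nextstagec}, adapted to the asymmetric shape of a type (d) sentence. Write $J_d$ for the set displayed on the right-hand side, and use the notation $\alpha_0, \alpha_1, \alpha_2$ of type (d), so that $\alpha_1 = \alpha_0 + 1 = \beta_0$, the two indices in $I_{\Rel, \beta_0 + 1}\setminus I_{\Rel,\beta_0}$ are $\alpha_0$ (recorded by the coordinate $i$) and $\alpha_1$ (recorded by $j$), and $\alpha_2 \in I_{\Rel,\beta_0}$. First I would dispose of the inconsistent case: if $\Gamma_{\beta_0}$ is inconsistent then $\X_{\beta_0} = \emptyset$ by Remark \ref{emptylogic}, so $J_d = \emptyset$, and since $\Gamma_{\beta_0} \subseteq \Gamma_{\beta_0 + 1}$ the latter is inconsistent too, giving $\X_{\beta_0 + 1} = \emptyset = J_d$. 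So assume $\Gamma_{\beta_0}$ is consistent; then $\X_{\beta_0} \neq \emptyset$ and, by Lemma \ref{nicemodel}, $(\X_{\beta_0}, \{b_\alpha^{\beta_0}\}_{\alpha \in I_{\Rel,\beta_0}})$ is a model of $\Gamma_{\beta_0}$.

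Next I would exhibit a model on $J_d$: for $\alpha \in I_{\Rel,\beta_0+1}$ set $\beth_\alpha = \{\rho \in J_d \mid \rho(\alpha) = 1\}$, and verify $(J_d, \{\beth_\alpha\}_{\alpha \in I_{\Rel,\beta_0+1}}) \models \Gamma_{\beta_0+1}$. The nonemptiness sentences are checked by producing witnesses: for $\alpha \in I_{\Rel,\beta_0}$ take $\sigma \in b_\alpha^{\beta_0}$ and note $(\sigma, 0, 1) \in \beth_\alpha$; for the new index $\alpha_1$ any $(\sigma,0,1)$ works; and for the new index $\alpha_0$ — the one delicate point — use that $\X_{\beta_0} \models \B_{\alpha_2} \neq \emptyset$ to pick $\sigma \in b_{\alpha_2}^{\beta_0}$, so that $(\sigma, 1, 0) \in J_d$ and $(\sigma,1,0) \in \beth_{\alpha_0}$. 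That $\Theta_{\beta_0}$ holds is immediate from the definition of $J_d$: the clause $[\sigma(\alpha_2)=1 \vee j = 1]$ encodes $(\forall x)[x\in \B_{\alpha_2}\cup\B_{\alpha_1}]$, while the single implication $[i = 1 \Rightarrow [\sigma(\alpha_2)=1 \wedge j = 0]]$ encodes both $\B_{\alpha_0}\subseteq\B_{\alpha_2}$ and $\B_{\alpha_0}\cap\B_{\alpha_1}=\emptyset$ at once. For an earlier sentence $\Theta_\alpha$ with $\alpha \in I_{\Log,\beta_0}$, if it failed in $(J_d,\{\beth_\alpha\})$ then Remark \ref{existentialproblems} produces a witnessing $\rho \in J_d$; since $\rho(\alpha) = (\rho\upharpoonright I_{\Rel,\beta_0})(\alpha)$ for every $\alpha \in I_{\Rel,\beta_0}$ and $\rho\upharpoonright I_{\Rel,\beta_0} \in \X_{\beta_0}$ by Lemma \ref{cutoff}, this restriction would witness the failure of $\Theta_\alpha$ in $(\X_{\beta_0},\{b_\alpha^{\beta_0}\})$, contradicting the previous paragraph.

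Finally I would conclude. The model just built shows $\Gamma_{\beta_0+1}$ is consistent, which is the reverse direction of the first assertion (the forward direction being trivial from $\Gamma_{\beta_0}\subseteq\Gamma_{\beta_0+1}$). Lemma \ref{cutoff} gives $\X_{\beta_0+1}\subseteq \X_{\beta_0}\times\{0,1\}^{I_{\Rel,\beta_0+1}\setminus I_{\Rel,\beta_0}}$; reading the three non-trivial conjuncts of $\Theta_{\beta_0}$ off any model realizing a given $\sigma \in \X_{\beta_0+1}$ shows $\sigma$ meets the defining condition of $J_d$, so $\X_{\beta_0+1}\subseteq J_d$. Conversely each $\rho \in J_d$ is realized in the model $(J_d,\{\beth_\alpha\})$ by the point $\rho$ itself, so $J_d \subseteq \X_{\beta_0+1}$, and therefore $\X_{\beta_0+1} = J_d$. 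I do not anticipate any real obstacle here: the argument is a routine transcription of the type (a)--(c) proofs, and the only things requiring attention are the bookkeeping against the asymmetric condition defining $J_d$ and the verification that $\B_{\alpha_0}$ cannot become empty, which is exactly where the hypothesis $\alpha_2 \in I_{\Rel,\beta_0}$ (so that $\B_{\alpha_2}\neq\emptyset$ already holds in $\X_{\beta_0}$) is used.
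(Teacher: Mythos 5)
Your proof is correct and takes exactly the approach the paper intends: the paper in fact omits this proof entirely, stating only that it ``goes more-or-less as in Lemmas \ref{nextstagea} and \ref{nextstagec},'' and your argument is a faithful and accurate transcription of that template, including the one genuinely type-(d)-specific point (witnessing $\B_{\alpha_0}\neq\emptyset$ via a $\sigma\in b_{\alpha_2}^{\beta_0}$, which exists because $\alpha_2\in I_{\Rel,\beta_0}$).
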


\begin{proof}  The argument, which we omit, goes more-or-less as in Lemmas \ref{nextstagea} and \ref{nextstagec}.
\end{proof}

\begin{lemma}\label{limitstage}  Let $\Gamma_{\beta_0}$ satisfy $\dagger$.  If $I_{\Log, \beta_0}$ has no largest element, then $\X_{\beta_0}$ is equal to $$\{\sigma \in  \{0, 1\}^{I_{\Rel, \beta_0}} \mid (\forall \beta_1 \in I_{\Log, \beta_0})[\sigma \upharpoonright I_{\Rel, \beta_1} \in \X_{\beta_1}]\}$$
\end{lemma}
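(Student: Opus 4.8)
Write $\mathcal{Y}$ for the set on the right-hand side. The inclusion $\X_{\beta_0}\subseteq\mathcal{Y}$ is immediate: if $\Gamma_{\beta_0}$ is inconsistent then $\X_{\beta_0}=\emptyset$ by Remark \ref{emptylogic}, and otherwise Lemma \ref{cutoff} applied with each $\beta_1\in I_{\Log,\beta_0}$ (each of which satisfies $\beta_1<\beta_0$) gives exactly $\sigma\upharpoonright I_{\Rel,\beta_1}\in\X_{\beta_1}$ for every $\sigma\in\X_{\beta_0}$. So the work is the reverse inclusion $\mathcal{Y}\subseteq\X_{\beta_0}$, which is trivial when $\mathcal{Y}=\emptyset$. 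The plan when $\mathcal{Y}\neq\emptyset$ is to show that the structure $(\mathcal{Y},\{\beth_{\alpha}\}_{\alpha\in I_{\Rel,\beta_0}})$, with $\beth_{\alpha}:=\{\rho\in\mathcal{Y}\mid\rho(\alpha)=1\}$, is a model of $\Gamma_{\beta_0}$; then each $\rho\in\mathcal{Y}$ witnesses $\rho\in\X_{\beta_0}$ via this model, taking the point to be $\rho$ itself. Throughout one uses that $I_{\Rel,\beta_0}=\bigcup_{\beta_1\in I_{\Log,\beta_0}}I_{\Rel,\beta_1}$, which holds because $h(\alpha)\in I_{\Log}$ for every $\alpha\in I_{\Rel}$ and $I_{\Log,\beta_0}$ has no largest element.

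The engine of the argument is surjectivity of restriction maps, and I would establish, simultaneously with the lemma and by transfinite induction, the auxiliary claim: \emph{for all $\beta_1\le\gamma\le\beta_0$ with $\Gamma_{\gamma}$ consistent, the map $\X_{\gamma}\to\X_{\beta_1}$, $\rho\mapsto\rho\upharpoonright I_{\Rel,\beta_1}$, is onto.} For $\gamma$ lying strictly between consecutive elements of $I_{\Log}$ one has $\X_{\gamma}=\X_{\gamma'}$ for the preceding interesting stage $\gamma'$, so this reduces to the inductive hypothesis; at $\gamma=\delta+1$ with $\delta\in I_{\Log}$ one reads a section directly off the explicit descriptions in Lemmas \ref{nextstagea}--\ref{nextstaged} (in each of the four types a given $\tau\in\X_{\delta}$ extends by choosing the new coordinates all $0$ except where forced to be $1$), composing with the inductively known surjection $\X_{\delta}\to\X_{\beta_1}$; at a limit $\gamma$ whose $I_{\Log,\gamma}$ has no largest element one uses the \emph{present lemma applied at $\gamma<\beta_0$} together with a transfinite recursion along $I_{\Log,\gamma}$: given $\tau\in\X_{\beta_1}$, build a $\subseteq$-coherent family $\rho_{\delta}\in\X_{\delta}$ ($\delta\in I_{\Log,\gamma}$) extending $\tau$, using the already-proved successor surjections at successor steps and the inductive form of this lemma to check $\bigcup_{\delta<\delta^{\ast}}\rho_{\delta}\in\X_{\delta^{\ast}}$ at limit points $\delta^{\ast}$; then $\bigcup_{\delta}\rho_{\delta}\in\X_{\gamma}$ restricts to $\tau$. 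Running the same threading along $I_{\Log,\beta_0}$ itself (and patching restrictions to $I_{\Rel,\beta_1'}$ for $\beta_1'<\beta_1$ via Lemma \ref{cutoff}) shows the restriction $\mathcal{Y}\to\X_{\beta_1}$ is onto for every $\beta_1\in I_{\Log,\beta_0}$.

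Granting surjectivity, verifying $(\mathcal{Y},\{\beth_{\alpha}\})\models\Gamma_{\beta_0}$ is routine. Fixing any $\rho_0\in\mathcal{Y}$ shows $\X_{\beta_1}\neq\emptyset$ for every $\beta_1\in I_{\Log,\beta_0}$, so $\Gamma_{\beta_1}$ is consistent (Remark \ref{emptylogic}) and $(\X_{\beta_1},\{b_{\alpha}^{\beta_1}\})\models\Gamma_{\beta_1}$ by Lemma \ref{nicemodel}. For $\B_{\alpha}\neq\emptyset$ with $\alpha\in I_{\Rel,\beta_0}$, pick $\gamma\in I_{\Log,\beta_0}$ with $h(\alpha)<\gamma$ (possible, no largest element), so $\alpha\in I_{\Rel,\gamma}$; choose $\tau\in\X_{\gamma}$ with $\tau(\alpha)=1$ and pull it back through the surjection to some $\rho\in\mathcal{Y}\cap\beth_{\alpha}$. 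For $\Theta_{\alpha'}$ with $\alpha'\in I_{\Log,\beta_0}$, observe that every relation occurring in $\Theta_{\alpha'}$ has $h$-value $\le\alpha'$, hence lies in $I_{\Rel,\alpha'+1}$; pick $\gamma\in I_{\Log,\beta_0}$ with $\gamma>\alpha'$, so those relations lie in $I_{\Rel,\gamma}$ and $\Theta_{\alpha'}\in\Gamma_{\gamma}$, so $\Theta_{\alpha'}$ holds in $(\X_{\gamma},\{b_{\alpha}^{\gamma}\})$. Since each $\Theta_{\alpha'}$ is a conjunction of clauses of the forms ``$\B_{\alpha}\neq\emptyset$'' and $(\forall x)[\cdots]$ in the relations, the restriction map $\mathcal{Y}\to\X_{\gamma}$ is onto, and $\rho\in\beth_{\alpha}\Leftrightarrow\rho\upharpoonright I_{\Rel,\gamma}\in b_{\alpha}^{\gamma}$ for $\alpha\in I_{\Rel,\gamma}$, each clause transfers from $\X_{\gamma}$ to $\mathcal{Y}$; hence $\Theta_{\alpha'}$ holds in $(\mathcal{Y},\{\beth_{\alpha}\})$.

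The main obstacle is the limit case of the surjectivity claim: it is genuinely entangled with the statement of the lemma, which forces the simultaneous transfinite induction and the nested threading recursion whose limit steps themselves invoke the inductive instance of the lemma. Once that scaffolding is in place, the transfer in the last paragraph is purely formal, precisely because every $\Theta_{\alpha}$ is universally quantified apart from its non-emptiness conjuncts.
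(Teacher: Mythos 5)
Your proposal is correct, but it routes the hard step through much heavier machinery than the paper uses, so a comparison is worthwhile. Both arguments share the same skeleton: call the right-hand set $\mathcal{Y}$, get $\X_{\beta_0}\subseteq\mathcal{Y}$ from Lemma \ref{cutoff}, and prove the reverse inclusion by showing $(\mathcal{Y},\{\beth_{\alpha}\})\models\Gamma_{\beta_0}$ so that each $\rho\in\mathcal{Y}$ realizes itself in a model. Where you diverge is in verifying the non-emptiness clauses $\B_{\alpha}\neq\emptyset$: you pull back a point of $b_{\alpha}^{\gamma}$ through a surjection $\mathcal{Y}\to\X_{\gamma}$, and to get that surjection you develop, by a simultaneous transfinite induction entangled with the lemma itself, the statement that all restriction maps $\X_{\gamma}\to\X_{\beta_1}$ are onto --- which is essentially the paper's Lemma \ref{extension}, a result the paper states separately and proves \emph{after} (and using) the present lemma. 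The paper avoids all of this: since the forward inclusion $\mathcal{Y}\supseteq\X_{\beta_0}$ is already in hand and $(\X_{\beta_0},\{b_{\alpha}^{\beta_0}\})\models\Gamma_{\beta_0}$ by Lemma \ref{nicemodel} (the inconsistent case being dispatched first, where both sides are empty), each $\beth_{\alpha}\supseteq b_{\alpha}^{\beta_0}\neq\emptyset$ comes for free, and the remaining universal content of each $\Theta_{\alpha}$ is handled exactly as in your last paragraph, phrased via Remark \ref{existentialproblems}: a failure has a pointwise witness $\sigma$, and $\sigma\upharpoonright I_{\Rel,\beta_1}$ for $\beta_1\in I_{\Log,\beta_0}$ large enough witnesses failure in $\X_{\beta_1}$, a contradiction. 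So your argument is sound (and your extension-of-sections analysis of Lemmas \ref{nextstagea}--\ref{nextstaged} is a correct proof of the surjectivity you need), but the ``genuine entanglement'' you identify is an artifact of your choice to prove non-emptiness by pulling back witnesses rather than by using the inclusion $\mathcal{Y}\supseteq\X_{\beta_0}$ you had already established; the paper's order of deduction (this lemma first, then Lemma \ref{extension} as a consequence) eliminates the simultaneous induction entirely.
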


\begin{proof}  Call the set in question $J_L$ and let $\beth_{\alpha} = \{\sigma \in J_L \mid \sigma(\alpha) = 1\}$.  If $\Gamma_{\beta_0}$ is inconsistent then $\Gamma_{\beta_1}$ is inconsistent for some $\beta_1 \in I_{\Log, \beta_0}$, so that $\X_{\beta_0} = \X_{\beta_1} = \emptyset = J_L$.  Suppose that $\Gamma_{\beta_0}$ is consistent.  Then $(\X_{\beta_0}, \{b_{\alpha}^{\beta_0}\}_{\alpha \in I_{\Rel, \beta_0}}) \models \Gamma_{\beta_0}$ by Lemma \ref{nicemodel}.  Moreover, by Lemma \ref{cutoff} we have $J_L \supseteq \X_{\beta_0}$.  This implies that $(J_L, \{\beth_{\alpha}\}_{\alpha \in I_{\Rel, \beta_0}}) \models \B_{\alpha} \neq \emptyset$ for each $\alpha \in I_{\Rel, \beta_0}$.

If $(J_L, \{\beth_{\alpha}\}_{\alpha \in I_{\Rel, \beta_0}})$ fails to model $\Gamma_{\beta_0}$ then we have a witness $\sigma$ as in  Remark \ref{existentialproblems}.  If $\sigma$ witnesses that $\Theta_{\alpha}$ is violated in $J_L$ then we select $\beta_1 \in I_{\Log, \beta_0}$ large enough that $\alpha \in I_{\Log, \beta_1}$ and notice that $\sigma\upharpoonright I_{\Rel, \beta_1}$ witnesses that $\Theta_{\alpha}$ fails in $\X_{\beta_1}$, a contradiction.  Thus $(J_L, \{\beth_{\alpha}\}_{\alpha \in I_{\Rel, \beta_0}}) \models \Gamma_{\beta_0}$, and by how $\X_{\beta_0}$ is defined we obtain $J_L \subseteq \X_{\beta_0}$ and so $J_L = \X_{\beta_0}$.
\end{proof}

\begin{lemma}\label{extension}  Let $\Gamma_{\beta}$ satisfy $\dagger$ and suppose that $\Gamma_{\beta}$ is consistent and $0 \leq \beta_0 \leq \beta$.  For each $\sigma \in \X_{\beta_0}$ there exists $\rho \in \X_{\beta}$ such that $\rho \upharpoonright I_{\Rel, \beta_0} = \sigma$.  
\end{lemma}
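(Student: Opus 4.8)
The plan is to prove the statement by transfinite induction on $\gamma$ ranging over the interval $[\beta_0,\beta]$, using the stage-by-stage descriptions of the sets $\X_\gamma$ obtained in Lemmas \ref{nextstagea}--\ref{limitstage}. First I would record the preliminary fact that $\Gamma_\gamma$ is consistent for \emph{every} $\gamma\le\beta$: since $\Gamma_\beta$ is consistent we have $\X_\beta\neq\emptyset$ by Remark \ref{emptylogic}, and restricting any element of $\X_\beta$ to $I_{\Rel,\gamma}$ yields (via Lemma \ref{cutoff}) an element of $\X_\gamma$, so $\X_\gamma\neq\emptyset$ and hence $\Gamma_\gamma$ is consistent, again by Remark \ref{emptylogic}. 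In particular every $\Gamma_\gamma$ and $\Gamma_{\gamma+1}$ used below is consistent, which is exactly the hypothesis needed to invoke the explicit formulas of Lemmas \ref{nextstagea}--\ref{limitstage} (and, in the type (b) case, it is what forces the auxiliary condition $b^{\gamma}_{\alpha_0}\cap b^{\gamma}_{\alpha_1}\neq\emptyset$ of Lemma \ref{nextstageb}).

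With $\sigma\in\X_{\beta_0}$ fixed, I would then construct by transfinite recursion a coherent tower $\{\rho_\gamma\}_{\beta_0\le\gamma\le\beta}$ with $\rho_\gamma\in\X_\gamma$, $\rho_{\beta_0}=\sigma$, and $\rho_{\gamma'}\upharpoonright I_{\Rel,\gamma}=\rho_\gamma$ whenever $\beta_0\le\gamma\le\gamma'\le\beta$; taking $\rho:=\rho_\beta$ then finishes. At a successor step $\gamma=\delta+1$: if $\delta\notin I_{\Log}$ then $\Gamma_{\delta+1}=\Gamma_\delta$ and $I_{\Rel,\delta+1}=I_{\Rel,\delta}$ (using that $h$ takes values in $I_{\Log}$), so $\X_{\delta+1}=\X_\delta$ and I set $\rho_{\delta+1}:=\rho_\delta$; if $\delta\in I_{\Log}$ then $\Theta_\delta$ has one of the types (a)--(d) and the corresponding Lemma \ref{nextstagea}, \ref{nextstageb}, \ref{nextstagec}, or \ref{nextstaged} presents $\X_{\delta+1}$ as a set of extensions of elements of $\X_\delta$ by the finitely many newly introduced relations. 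I extend $\rho_\delta$ by giving those relations values that make the defining condition hold --- assigning all of them $0$ in types (a) and (b), and $0$ to the first new relation and $1$ to the second in types (c) and (d) --- so that in each case the relevant conjunct is satisfied either directly or vacuously regardless of the uncontrolled coordinates of $\rho_\delta$. Coherence at $\delta+1$ is immediate from this choice, and at other pairs it follows from the inductive hypothesis.

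At a limit step $\gamma>\beta_0$ I would set $\rho_\gamma:=\bigcup_{\beta_0\le\delta<\gamma}\rho_\delta$, which is a well-defined function on $\bigcup_{\beta_0\le\delta<\gamma}I_{\Rel,\delta}=I_{\Rel,\gamma}$ because the earlier tower is coherent, and which restricts to $\sigma$ on $I_{\Rel,\beta_0}$. The point to verify is that $\rho_\gamma\in\X_\gamma$. When $I_{\Log,\gamma}$ has no largest element this is exactly the characterization of $\X_\gamma$ in Lemma \ref{limitstage}: for $\beta_1\in I_{\Log,\gamma}$ with $\beta_1\ge\beta_0$ one has $\rho_\gamma\upharpoonright I_{\Rel,\beta_1}=\rho_{\beta_1}\in\X_{\beta_1}$ by coherence, while for $\beta_1<\beta_0$ one has $\rho_\gamma\upharpoonright I_{\Rel,\beta_1}=\sigma\upharpoonright I_{\Rel,\beta_1}\in\X_{\beta_1}$ by Lemma \ref{cutoff}. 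The remaining case is when $I_{\Log,\gamma}$ does have a largest element $\epsilon$; then $\epsilon+1<\gamma$, and since $h$ takes values in $I_{\Log}$ one gets $\Gamma_\gamma=\Gamma_{\epsilon+1}$ and $I_{\Rel,\gamma}=I_{\Rel,\epsilon+1}$, hence $\X_\gamma=\X_{\epsilon+1}$, and $\rho_\gamma$ coincides with $\rho_{\epsilon+1}$ (or, if $\epsilon+1<\beta_0$, with a restriction of $\sigma$), which lies in $\X_{\epsilon+1}$ by construction or by Lemma \ref{cutoff}.

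I expect the only real subtlety to be in the limit stages, namely the need to treat separately the generic case handled by Lemma \ref{limitstage} and the degenerate case in which no new logical sentences appear cofinally below $\gamma$; the successor stages are routine applications of Lemmas \ref{nextstagea}--\ref{nextstaged} once the right values for the new relations are chosen, and the coherence bookkeeping propagates automatically.
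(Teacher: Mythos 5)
Your proposal is correct and follows essentially the same route as the paper: a transfinite recursion building a coherent tower of restrictions from $\beta_0$ up to $\beta$, extending at successor steps via the explicit descriptions in Lemmas \ref{nextstagea}--\ref{nextstaged} and passing to limits via Lemma \ref{limitstage}. Your write-up is in fact somewhat more careful than the paper's (explicit coordinate choices at successor steps, the preliminary consistency observation, and the degenerate limit case), but the underlying argument is the same.
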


\begin{proof}  Let $\sigma \in \X_{\beta_0}$ be given and let $\sigma_{\beta_0} = \sigma$.  Suppose that we have defined $\sigma_{\beta_1}$ for all $\beta_0 \leq \beta_1 <\beta_3 \leq \beta$ such that if $\beta_0 \leq \beta_1 \leq \beta_2 < \beta_3$ we have $\sigma_{\beta_2} \upharpoonright I_{\Rel, \beta_1} = \sigma_{\beta_1}$.

Firstly, if $I_{\Rel, \beta_1} = I_{\Rel, \beta_3}$ for some $\beta_1 < \beta_3$ then we let $\sigma_{\beta_3} = \sigma_{\beta_1}$.  Suppose that $I_{\Rel, \beta_1} = I_{\Rel, \beta_3}$ for all $\beta_1 < \beta_3$.  If $\beta_3 = \beta_1 + 1$ then $\beta_1 \in I_{\Log, \beta_3}$ and $\X_{\beta_3}$ is computed from $\X_{\beta_1}$ by one of Lemma \ref{nextstagea}, \ref{nextstageb}, \ref{nextstagec}, or \ref{nextstaged} and in each case we see that we can produce such a $\sigma_{\beta_3}$.  If $\beta_3$ is a limit then either $I_{\Rel, \beta_3} = \emptyset$, in which case we let $\sigma_{\beta_3}$ be the unique element in $\X_{\beta_3}$, or $\beta_3 = \sup I_{\Rel, \beta_3}$ and we let

\begin{center}  $\sigma_{\beta_3}(\alpha)\begin{cases}1$ if $\sigma_{\beta_1}(\alpha) = 1$ for all $\beta_1 \in I_{\Rel, \beta_3}$ large enough that $\alpha \in I_{\Rel, \beta_1}\\0$ if $\sigma_{\beta_1}(\alpha) = 0$ for all $\beta_1 \in I_{\Rel, \beta_3}$ large enough that $\alpha \in I_{\Rel, \beta_1}\end{cases}$

\end{center}

\noindent and $\sigma_{\beta_3} \in \X_{\beta_3}$ by Lemma \ref{extension}.
\end{proof}

\begin{lemma}\label{compactsubspace}  Let $\Gamma_{\beta}$ satisfy $\dagger$.  The set $\X_{\beta}$ is compact as a subspace of $\{0, 1\}^{I_{\Rel, \beta}}$ under the product topology.
\end{lemma}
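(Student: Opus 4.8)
The plan is to show that $\X_{\beta}$ is a closed subset of $\{0,1\}^{I_{\Rel,\beta}}$; since $\{0,1\}^{I_{\Rel,\beta}}$ is compact by Tychonoff's theorem, $\X_{\beta}$ is then compact as a closed subspace of a compact space. I would prove closedness by transfinite induction on $\beta$, using Lemmas \ref{nextstagea}, \ref{nextstageb}, \ref{nextstagec}, \ref{nextstaged} and \ref{limitstage} to relate $\X_{\beta}$ to the sets $\X_{\beta_0}$ with $\beta_0 < \beta$. First I would dispose of the degenerate cases: if $I_{\Log,\beta} = \emptyset$ then $I_{\Rel,\beta} = \emptyset$ by Remark \ref{emptylogic} and $\{0,1\}^{\emptyset}$ is a one-point space, and if $\Gamma_{\beta}$ is inconsistent then $\X_{\beta} = \emptyset$ by Remark \ref{emptylogic}; in either case $\X_{\beta}$ is closed. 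So I may assume $\Gamma_{\beta}$ is consistent and $I_{\Log,\beta}\neq\emptyset$.

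For the successor step write $\beta = \beta_0+1$. If $\beta_0 \notin I_{\Log,\beta}$ then $\Gamma_{\beta} = \Gamma_{\beta_0}$ and $I_{\Rel,\beta} = I_{\Rel,\beta_0}$, so $\X_{\beta} = \X_{\beta_0}$ is closed by the inductive hypothesis. If $\beta_0 \in I_{\Log,\beta}$, then $\Theta_{\beta_0}$ has one of the forms (a)--(d), and the corresponding one of Lemmas \ref{nextstagea}--\ref{nextstaged} exhibits $\X_{\beta_0+1}$ as $\pi^{-1}(\X_{\beta_0}) \cap C$, where $\pi\colon \{0,1\}^{I_{\Rel,\beta_0+1}} \to \{0,1\}^{I_{\Rel,\beta_0}}$ is the (continuous) restriction map and $C \subseteq \{0,1\}^{I_{\Rel,\beta_0+1}}$ is the set defined by the extra clause, for instance ``$i = 0 \vee j = 0$'' in type (a) or ``$i = 1 \Rightarrow \sigma(\alpha_0) = \sigma(\alpha_1) = 1$'' in type (b). Since $I_{\Rel,\beta_0+1}\setminus I_{\Rel,\beta_0}$ is finite — it consists of the one or two relations freshly introduced by $\Theta_{\beta_0}$ — the clause defining $C$ constrains only finitely many coordinates, so $C$ is clopen; and $\pi^{-1}(\X_{\beta_0})$ is closed because $\X_{\beta_0}$ is closed by the inductive hypothesis. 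Hence $\X_{\beta_0+1}$ is closed.

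For a limit ordinal $\beta$ there are two subcases. If $I_{\Log,\beta}$ has a largest element $\gamma$, then $\Gamma_{\beta} = \Gamma_{\gamma+1}$ and $I_{\Rel,\beta} = I_{\Rel,\gamma+1}$ (recall $h(\alpha) \in I_{\Log}$, so $h(\alpha) < \beta$ iff $h(\alpha) \leq \gamma$), whence $\X_{\beta} = \X_{\gamma+1}$, which is closed by the inductive hypothesis since $\gamma + 1 < \beta$. If $I_{\Log,\beta}$ has no largest element, then Lemma \ref{limitstage} gives $\X_{\beta} = \{\sigma \in \{0,1\}^{I_{\Rel,\beta}} : (\forall \beta_1 \in I_{\Log,\beta})\ \sigma \upharpoonright I_{\Rel,\beta_1} \in \X_{\beta_1}\} = \bigcap_{\beta_1 \in I_{\Log,\beta}} \pi_{\beta_1}^{-1}(\X_{\beta_1})$, where $\pi_{\beta_1}\colon \{0,1\}^{I_{\Rel,\beta}} \to \{0,1\}^{I_{\Rel,\beta_1}}$ is restriction; each $\X_{\beta_1}$ is closed by the inductive hypothesis (as $\beta_1 < \beta$) and each $\pi_{\beta_1}$ is continuous, so $\X_{\beta}$ is an intersection of closed sets and hence closed. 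Taking $\beta$ to be the ordinal in the statement completes the argument (and note Tychonoff is being used on a product that may have index set of size $\aleph_1$).

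The only delicate point is the bookkeeping in the case split — successor versus limit, and whether $I_{\Log,\beta}$ has a largest element — together with checking that every shape of $\beta$ is covered by one of the cited lemmas. I do not expect any real analytic obstacle: $\X_{\beta}$ is carved out of the cube by conditions each of which is the preimage, under a coordinate projection, of a set already known inductively to be closed, so closedness propagates up the induction and compactness follows from Tychonoff's theorem.
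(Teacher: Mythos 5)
Your proof is correct, but it takes a genuinely different route from the paper's. You argue by transfinite induction on $\beta$, reusing the explicit descriptions of $\X_{\beta_0+1}$ from Lemmas \ref{nextstagea}--\ref{nextstaged} (each successor stage is $\pi^{-1}(\X_{\beta_0})$ intersected with a clopen set depending on finitely many coordinates) and of $\X_{\beta}$ at limits from Lemma \ref{limitstage} (an intersection of preimages of closed sets), so that closedness propagates up the induction; Tychonoff then finishes. The paper instead gives a direct, non-inductive argument: for any $\sigma_0 \in \{0,1\}^{I_{\Rel,\beta}} \setminus \X_{\beta}$ it adjoins $\sigma_0$ to the canonical model $(\X_{\beta}, \{b_{\alpha}^{\beta}\})$, notes via Remark \ref{existentialproblems} that the resulting structure fails some $\Theta_{\alpha}$ with $\sigma_0$ as the witness, and observes that since each $\Theta_{\alpha}$ constrains only finitely many coordinates, the entire basic clopen neighborhood of $\sigma_0$ determined by those coordinates misses $\X_{\beta}$; hence the complement is open. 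The paper's argument is shorter and sidesteps the successor/limit bookkeeping entirely, while yours is more mechanical and makes visible exactly how $\X_{\beta}$ is assembled from closed pieces; your handling of the edge cases (inconsistency, $\beta_0 \notin I_{\Log,\beta}$, $I_{\Log,\beta}$ with a largest element, and the finiteness of $I_{\Rel,\beta_0+1}\setminus I_{\Rel,\beta_0}$) is accurate, so both proofs are sound.
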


\begin{proof}  If $\Gamma_{\beta}$ is inconsistent then $\X_{\beta} = \emptyset$, which is compact.  Otherwise, supposing that $\sigma_0 \in \{0, 1\}^{I_{\Rel, \beta}} \setminus \X_{\beta}$ we let $J = \X_{\beta} \cup\{\sigma_0\}$ and define $\beth_{\alpha} = \{\sigma \in J \mid \sigma(\alpha) = 1\}$ for each $\alpha \in I_{\Rel, \beta}$.  As $J \supseteq \X_{\beta}$ we see that $(J, \{\beth_{\alpha}\}_{\alpha \in I_{\Rel, \beta}}) \models \B_{\alpha} \neq \emptyset$ for each $\alpha \in I_{\Rel, \beta}$.  Since $J \neq \X_{\beta}$ we know that $(J, \{\beth_{\alpha}\}_{\alpha \in I_{\Rel, \beta}})$ is not a model of $\Gamma_{\beta}$ (by how $\X_{\beta}$ is defined).  By Remark \ref{existentialproblems} we have a witness for this, and it is easy to see that this witness is $\sigma_0$.  Thus if $\Theta_{\alpha}$, say of type (b), is violated we have that $\{\sigma \in \{0, 1\}^{I_{\Rel, \beta}} \mid \sigma\upharpoonright \{\alpha_0, \alpha_1, \alpha_2\} = \sigma_0\upharpoonright\{\alpha_0, \alpha_1, \alpha_2\}\} \subseteq \{0, 1\}^{I_{\Rel, \beta}} \setminus \X_{\beta}$.  The other cases in types (a), (c), (d) are comparable.  Thus $\X_{\beta}$ is closed in $\{0, 1\}^{I_{\Rel, \beta}}$, and as the latter space is compact we are finished.
\end{proof}

\begin{lemma}\label{compactconnected}  Let $\Gamma_{\beta}$ satisfy $\dagger$.  The topological space $(\X_{\beta}, \tau(\{b_{\alpha}^{\beta}\}_{\alpha \in I_{\Rel, \beta}}))$ is compact and connected.
\end{lemma}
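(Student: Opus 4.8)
The plan is to treat compactness and connectedness separately, the former being immediate and the latter requiring a transfinite induction on $\beta$ that mirrors the constructions of Section \ref{Top}.

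For compactness, observe that the topology $\tau(\{b_{\alpha}^{\beta}\}_{\alpha \in I_{\Rel, \beta}})$ is coarser than the topology $\X_{\beta}$ inherits as a subspace of the product $\{0,1\}^{I_{\Rel, \beta}}$, since each $b_{\alpha}^{\beta}$ is the trace on $\X_{\beta}$ of a subbasic product-open set. Hence the identity map from $\X_{\beta}$ with the product subspace topology to $(\X_{\beta}, \tau(\{b_{\alpha}^{\beta}\}))$ is continuous, and as the domain is compact by Lemma \ref{compactsubspace}, so is the target.

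For connectedness I would induct on $\beta$. If $\Gamma_{\beta}$ is inconsistent then $\X_{\beta} = \emptyset$ is connected, and if $I_{\Log, \beta} = \emptyset$ then $\X_{\beta}$ is a singleton by Remark \ref{emptylogic}; so assume $\Gamma_{\beta}$ is consistent and $I_{\Log, \beta} \neq \emptyset$. Suppose first that $I_{\Log, \beta}$ has a largest element $\gamma$. Then $I_{\Rel, \beta} = I_{\Rel, \gamma + 1}$ and $\Gamma_{\beta} = \Gamma_{\gamma + 1}$, so $\X_{\beta} = \X_{\gamma + 1}$. Depending on which of the types (a)--(d) the sentence $\Theta_{\gamma}$ has, Lemma \ref{nextstagea}, \ref{nextstageb}, \ref{nextstagec}, or \ref{nextstaged} describes $\X_{\gamma + 1}$ as a set, and a direct comparison shows this set is exactly the space $\hat{X}$ produced, respectively, by Lemma \ref{makingHausdorff}, \ref{makingbasis}, \ref{splittinginhalf}, or \ref{makingregularity} from $X = (\X_{\gamma}, \tau(\{b_{\alpha}^{\gamma}\}))$, with the auxiliary open set(s) $W$ (or $W_0, W_1$) taken to be the appropriate $b_{\alpha_i}^{\gamma}$. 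Moreover the subbasis $\{b_{\alpha}^{\gamma + 1}\}_{\alpha \in I_{\Rel, \gamma + 1}}$ for $\X_{\gamma+1}$ coincides with the subbasis used in those lemmas: the relations indexed by $I_{\Rel, \gamma}$ pull back, under projection to the $\X_{\gamma}$ coordinate, a subbasis for $\X_{\gamma}$, and the newly introduced relations are exactly the sets $O_1$ (and $O_2$). All hypotheses of the relevant lemma are met: $\X_{\gamma}$ is connected by the inductive hypothesis; each $b_{\alpha}^{\gamma}$ with $\alpha \in I_{\Rel, \gamma}$ is nonempty, since $(\X_{\gamma}, \{b_{\alpha}^{\gamma}\})$ models $\B_{\alpha} \neq \emptyset$ for all such $\alpha$ by Lemma \ref{nicemodel}; and in the type (b) case $b_{\alpha_0}^{\gamma} \cap b_{\alpha_1}^{\gamma} \neq \emptyset$ because $\X_{\beta} = \X_{\gamma + 1}$ is nonempty (Lemma \ref{nextstageb}). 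Thus $\X_{\beta}$ is connected.

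The remaining case, where $I_{\Log, \beta}$ is nonempty with no largest element, is the one I expect to require genuine work. By Lemma \ref{limitstage}, $\X_{\beta} = \{\sigma : \sigma \upharpoonright I_{\Rel, \beta_1} \in \X_{\beta_1} \text{ for all } \beta_1 \in I_{\Log, \beta}\}$, and for each such $\beta_1$ the restriction map $\pi_{\beta_1} : \X_{\beta} \to \X_{\beta_1}$ is continuous (the preimage of $b_{\alpha}^{\beta_1}$ is $b_{\alpha}^{\beta}$) and, by Lemma \ref{extension}, surjective. Suppose for contradiction that $\X_{\beta} = V_0 \sqcup V_1$ is a nontrivial disconnection. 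Each $V_i$ is then clopen, hence compact, hence a finite union of basic open sets; since $I_{\Log, \beta}$ has no largest element, $I_{\Rel, \beta} = \bigcup_{\beta_1 \in I_{\Log, \beta}} I_{\Rel, \beta_1}$, so all the finitely many coordinates appearing in those basic sets lie in $I_{\Rel, \beta_1}$ for a single $\beta_1 \in I_{\Log, \beta}$. Then $V_i = \pi_{\beta_1}^{-1}(D_i)$ for some open $D_i \subseteq \X_{\beta_1}$, and surjectivity of $\pi_{\beta_1}$ forces $\X_{\beta_1} = D_0 \sqcup D_1$ with both $D_i$ nonempty, a nontrivial disconnection of $\X_{\beta_1}$ contradicting the inductive hypothesis. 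This completes the induction, and hence the proof.
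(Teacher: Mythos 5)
Your proposal is correct and follows essentially the same route as the paper: compactness via the coarser-than-product-topology observation together with Lemma \ref{compactsubspace}, and connectedness by transfinite induction with the successor step handled by Lemmas \ref{nextstagea}--\ref{nextstaged} paired with Lemmas \ref{makingHausdorff}--\ref{makingregularity}, and the limit step by pushing a hypothetical disconnection down to some $\X_{\beta_1}$ using compactness and Lemma \ref{extension}. Your packaging of the limit case in terms of the continuous surjections $\pi_{\beta_1}$ is just a cleaner phrasing of the paper's argument with the sets $V_i^{\beta_1}$, and your explicit verification of the hypotheses of the ``making'' lemmas is a welcome extra bit of care.
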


\begin{proof}  If $\Gamma_{\beta}$ is inconsistent then $\X_{\beta}$ is empty, and therefore compact and connected.  Suppose that $\Gamma_{\beta}$ is consistent.  The space $(\X_{\beta}, \tau(\{b_{\alpha}^{\beta}\}_{\alpha \in I_{\Rel, \beta}}))$ is compact because $\X_{\beta}$ is compact as a subspace of $\{0, 1\}^{I_{\Rel, \beta}}$ under the product topology, and each $b_{\alpha}^{\beta}$ is the intersection of an open subset in $\{0, 1\}^{I_{\Rel, \beta}}$ with $\X_{\beta}$.

We prove connectedness by induction on $0 \leq \beta_0 \leq \beta$.  For $\beta_0 = 0$, or more generally if $I_{\Log, \beta_0} = \emptyset$, then $\X_{\beta_0}$ is a single point, and so the space is necessarily connected.  Suppose that $\X_{\beta_1}$ is connected for all $\beta_1 < \beta_0$.  Suppose further that $I_{\Log, \beta_0}$ has a maximal element, $\beta_1$.  If $\beta_0 = \beta_1 + 1$, then $\Theta_{\beta_1}$ is of type (a), (b), (c), or (d), and so $\X_{\beta_0}$ is defined from $\X_{\beta_1}$ by Lemma \ref{nextstagea}, \ref{nextstageb}, \ref{nextstagec}, or \ref{nextstaged} respectively.  Then we conclude that $\X_{\beta_0}$ is connected by Lemma \ref{makingHausdorff}, \ref{makingbasis}, \ref{splittinginhalf}, or \ref{makingregularity} respectively.  Otherwise we have $\X_{\beta_0} = \X_{\beta_1 + 1}$.

Suppose that $I_{\Log, \beta_0}$ is nonempty and does not have a maximal element.  Suppose for contradiction that $\X_{\beta_0} = V_0 \sqcup V_1$ is a nontrivial disconnection.  Let $V_0 = \bigcup_{j\in J_0}(b_{\alpha_{0, j}}^{\beta_0} \cap \cdots \cap b_{\alpha_{n_j, j}}^{\beta_0})$ and similarly $V_1 = \bigcup_{j\in J_1}(b_{\alpha_{0, j}}^{\beta_0} \cap \cdots \cap b_{\alpha_{n_j, j}}^{\beta_0})$.  Since $\X_{\beta_0}$ is compact we may assume that both $J_0$ and $J_1$ are finite.  Select $\beta_1 < \beta_0$ which is large enough that $\alpha_{i, j} \in I_{\Rel, \beta_1}$ for all $j \in J_0 \cup J_1$ and $0 \leq i \leq n_j$.  Let $V_0^{\beta_1} = \bigcup_{j\in J_0}(b_{\alpha_{0, j}}^{\beta_1} \cap \cdots \cap b_{\alpha_{n_j, j}}^{\beta_1})$ and $V_1^{\beta_1} = \bigcup_{j\in J_1}(b_{\alpha_{0, j}}^{\beta_1} \cap \cdots \cap b_{\alpha_{n_j, j}}^{\beta_1})$.

Notice that neither $V_0^{\beta_1}$ nor $V_1^{\beta_1}$ is empty, for if, say, $\sigma \in b_{\alpha_{0, j}}^{\beta_0} \cap \cdots \cap b_{\alpha_{n_j, j}}^{\beta_0}$ for some $j \in J_0$, then $\sigma \upharpoonright I_{\Rel, \beta_1} \in b_{\alpha_{0, j}}^{\beta_1} \cap \cdots \cap b_{\alpha_{n_j, j}}^{\beta_1} \subseteq V_0^{\beta_1}$, and a similar argument applies for $V_1^{\beta_1}$.  

The sets $V_0^{\beta_1}$ and $V_1^{\beta_1}$ are obviously open in $\X_{\beta_0}$.  If $\sigma \in \X_{\beta_0} \setminus (V_0^{\beta_1} \cup V_1^{\beta_1})$ then by Lemma \ref{extension} select $\rho \in \X_{\beta_0}$ such that $\rho \upharpoonright I_{\Rel, \beta_1} = \sigma$, but clearly $\rho \in \X_{\beta_0}\setminus (V_0 \cup V_1)$, which cannot be.  If $\sigma \in V_0^{\beta_1} \cap V_1^{\beta_1}$ then again we select $\rho \in \X_{\beta_0}$ with $\rho \upharpoonright I_{\Rel, \beta_1} = \sigma$ and it is easy to see that $\rho \in V_0 \cap V_1$, which is also impossible.  Thus $\X_{\beta_1} = V_0^{\beta_1} \sqcup V_1^{\beta_1}$ is a nontrivial disconnection, contradicting the induction assumption.
\end{proof}

\begin{definitions}  Let $\Gamma_{\beta}$ satisfy $\dagger$.  Given a model $(\chi, \{\beth_{\alpha}\}_{\alpha \in I_{\Rel, \beta}})$ of $\Gamma_{\beta}$ there is a function $\Coor_{\beta}: \chi \rightarrow \X_{\beta}$ (which we'll call the \textit{coordinates function}) given by $\Coor_{\beta}(x) = \sigma$, where $\sigma(\alpha) = 1$ if and only if $x\in \beth_{\alpha}$.  We'll say that a model $(\chi, \{\beth_{\alpha}\}_{\alpha \in I_{\Rel, \beta}})$ of $\Gamma_{\beta}$ is \textit{Boolean saturated} if for every finite $Y, Z \subseteq I_{\Rel, \beta}$ such that $\X_{\beta, Y, Z} \neq \emptyset$ the set $\Coor_{\beta}^{-1}(\X_{\beta, Y, Z})$ is infinite.
\end{definitions}

\begin{lemma} \label{coord}  Let $\Gamma_{\beta}$ satisfy $\dagger$.  The coordinates function $\Coor_{\beta}: \chi \rightarrow \X_{\beta}$ is continuous, where $\chi$ is given the topology $\tau(\{\beth_{\alpha}\}_{\alpha \in I_{\Rel, \beta}})$ and $\X_{\beta}$ is given the topology $\tau(\{b_{\alpha}^{\beta}\}_{\alpha \in I_{\Rel, \beta}})$.  The function $\Coor_{\beta}$ is an embedding if injective.  The image $\Coor_{\beta}(\chi)$ is dense in $\X_{\beta}$ if $\chi$ is Boolean saturated.
\end{lemma}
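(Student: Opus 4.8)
The plan is to treat the three assertions in turn, in each case reducing matters to the subbasis $\{\beth_\alpha\}_{\alpha \in I_{\Rel,\beta}}$ of $\chi$ and the subbasis $\{b_\alpha^\beta\}_{\alpha \in I_{\Rel,\beta}}$ of $\X_\beta$, and using nothing beyond the unwinding of the definitions of $b_\alpha^\beta$, $\X_{\beta,Y,Z}$, $\Coor_\beta$, and Boolean saturation. For continuity it suffices to show that the $\Coor_\beta$-preimage of each subbasic open set is open. Since $b_\alpha^\beta = \X_{\beta,\emptyset,\{\alpha\}} = \{\sigma \in \X_\beta \mid \sigma(\alpha) = 1\}$, and by definition $\Coor_\beta(x)(\alpha) = 1$ precisely when $x \in \beth_\alpha$, one gets $\Coor_\beta^{-1}(b_\alpha^\beta) = \beth_\alpha$, which is subbasic and hence open in $\chi$. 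So $\Coor_\beta$ is continuous.

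Next, assuming $\Coor_\beta$ is injective, I would show it is an embedding by checking that it is an open map onto its image $\Coor_\beta(\chi)$; together with continuity and injectivity this gives the claim. The same equivalence $x \in \beth_\alpha \Leftrightarrow \Coor_\beta(x) \in b_\alpha^\beta$ yields $\Coor_\beta(\beth_\alpha) = \Coor_\beta(\chi) \cap b_\alpha^\beta$, which is relatively open in $\Coor_\beta(\chi)$. An arbitrary open subset of $\chi$ is a union of finite intersections of sets $\beth_\alpha$; since images always commute with unions and an injective map commutes with (in particular finite) intersections, $\Coor_\beta$ sends every open subset of $\chi$ to a relatively open subset of $\Coor_\beta(\chi)$, as required.

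Finally, suppose $\chi$ is Boolean saturated. The sets $\X_{\beta,\emptyset,Z} = \bigcap_{\alpha \in Z} b_\alpha^\beta$ with $Z \subseteq I_{\Rel,\beta}$ finite form a basis for $\tau(\{b_\alpha^\beta\}_{\alpha \in I_{\Rel,\beta}})$ (the case $Z = \emptyset$ accounting for $\X_\beta$ itself), so density of $\Coor_\beta(\chi)$ amounts to showing each nonempty one meets $\Coor_\beta(\chi)$. But if $\X_{\beta,\emptyset,Z} \neq \emptyset$, then Boolean saturation, applied with the role of $Y$ played by $\emptyset$, forces $\Coor_\beta^{-1}(\X_{\beta,\emptyset,Z})$ to be infinite, hence nonempty, so $\Coor_\beta(\chi) \cap \X_{\beta,\emptyset,Z} \neq \emptyset$; thus $\Coor_\beta(\chi)$ is dense in $\X_\beta$.

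I do not expect any of the three parts to be genuinely hard. The only step deserving attention is the embedding claim: here it is injectivity, rather than merely continuity, that makes $\Coor_\beta$ carry open sets to relatively open sets, since in general a continuous injection onto its image need not be a homeomorphism.
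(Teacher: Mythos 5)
Your proposal is correct and follows essentially the same route as the paper: both the continuity and the embedding claims reduce to the biconditional $x \in \beth_{\alpha_0} \cap \cdots \cap \beth_{\alpha_n} \Leftrightarrow \Coor_{\beta}(x) \in b_{\alpha_0}^{\beta} \cap \cdots \cap b_{\alpha_n}^{\beta}$, which is precisely the observation the paper declares ``sufficient.'' The only (cosmetic) divergence is in the density step, where the paper first gets density in the finer subspace topology inherited from $\{0,1\}^{I_{\Rel,\beta}}$ and then coarsens, whereas you verify the basic open sets $\X_{\beta,\emptyset,Z}$ of $\tau(\{b_{\alpha}^{\beta}\}_{\alpha\in I_{\Rel,\beta}})$ directly; since Boolean saturation is stated for all $\X_{\beta,Y,Z}$, your specialization to $Y=\emptyset$ is immediate and equally valid.
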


\begin{proof} For the first two sentences we have $x\in \beth_{\alpha_0} \cap \cdots \cap \beth_{\alpha_n}$ if and only if $\Coor_{\beta}(x) \in b_{\alpha_0}^{\beta}\cap \cdots \cap b_{\alpha_n}^{\beta}$, and this is sufficient.  For the third sentence, if $\chi$ is Boolean saturated then $\Coor_{\beta}(\chi)$ is dense in the set $\X_{\beta}$ under the subspace topology inherited by $\{0, 1\}^{I_{\Rel, \beta}}$, and since this topology is finer than $\tau(\{b_{\alpha}^{\beta}\}_{\alpha \in I_{\Rel, \beta}})$ we are done.
\end{proof}

\begin{lemma}\label{refinecasea}  Let $\beta_1 < \aleph_1$ and $\Gamma_{\beta_1}$ satisfy $\dagger$.  Suppose $\Gamma_{\beta_1}$ is consistent and that $\beta_0$ is the largest element in $I_{\Log, \beta_1}$.  Let $(\chi, \{\beth_{\alpha}\}_{\alpha \in I_{\Rel, \beta_0}})$ be a Boolean saturated model of $\Gamma_{\beta_0}$.

If $\Theta_{\beta_0}$ is of type (a) and $x_0, x_1 \in \chi$ are distinct, we can define $\beth_{\alpha_0}, \beth_{\alpha_1} \subseteq \chi$ so that $(\chi, \{\beth_{\alpha}\}_{\alpha \in I_{\Rel, \beta_1}})$ is a Boolean saturated model of $\Gamma_{\beta_1}$ and $x_0 \in \beth_{\alpha_0}$ and $x_1 \in \beth_{\alpha_1}$.
\end{lemma}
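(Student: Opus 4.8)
The plan is to leave every old relation $\beth_{\alpha}$ ($\alpha \in I_{\Rel, \beta_0}$) untouched and to carve $\beth_{\alpha_0}$ and $\beth_{\alpha_1}$ out of $\chi$ as \emph{disjoint} subsets, one containing $x_0$ and the other $x_1$. Since $\beta_0$ is the largest element of $I_{\Log, \beta_1}$ we have $\beta_1 = \beta_0 + 1$ on the level of logic, so $\Gamma_{\beta_1} = \Gamma_{\beta_0} \cup \{\Theta_{\beta_0}\}$, and by Lemma \ref{nextstagea} the new relations are exactly $\alpha_0, \alpha_1$, i.e. $I_{\Rel, \beta_1} \setminus I_{\Rel, \beta_0} = \{\alpha_0, \alpha_1\}$, while elements of $\X_{\beta_1}$ are triples $(\sigma, i, j)$ with $\sigma \in \X_{\beta_0}$ and $\neg(i = j = 1)$. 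Because $\Theta_{\beta_0}$ is of type (a) it asserts only $\B_{\alpha_0} \cap \B_{\alpha_1} = \emptyset$, $\B_{\alpha_0} \neq \emptyset$, $\B_{\alpha_1} \neq \emptyset$, and the sentences of $\Gamma_{\beta_0}$ do not mention $\B_{\alpha_0}$ or $\B_{\alpha_1}$; hence \emph{any} choice of disjoint nonempty $\beth_{\alpha_0}, \beth_{\alpha_1} \subseteq \chi$ yields a model of $\Gamma_{\beta_1}$. So the only genuine work is arranging Boolean saturation while keeping $x_0 \in \beth_{\alpha_0}$ and $x_1 \in \beth_{\alpha_1}$.

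Next I would reduce Boolean saturation to three ``infinitely often'' demands. For finite disjoint $Y, Z \subseteq I_{\Rel, \beta_1}$, put $Y' = Y \cap I_{\Rel, \beta_0}$, $Z' = Z \cap I_{\Rel, \beta_0}$; from the description of $\X_{\beta_1}$ above one checks that $\X_{\beta_1, Y, Z} \neq \emptyset$ iff $\X_{\beta_0, Y', Z'} \neq \emptyset$ and $\{\alpha_0, \alpha_1\} \not\subseteq Z$. Since $\Coor_{\beta_1}(x) = (\Coor_{\beta_0}(x), [x \in \beth_{\alpha_0}], [x \in \beth_{\alpha_1}])$, the set $\Coor_{\beta_1}^{-1}(\X_{\beta_1, Y, Z})$ consists of those $x$ with $\Coor_{\beta_0}(x) \in \X_{\beta_0, Y', Z'}$ that additionally lie in $\beth_{\alpha_0}$ when $\alpha_0 \in Z$, outside $\beth_{\alpha_0}$ when $\alpha_0 \in Y$, and analogously for $\alpha_1$. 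Running through the finitely many patterns and using disjointness of $\beth_{\alpha_0}, \beth_{\alpha_1}$ (so membership in one forces non-membership in the other, and $\chi \setminus \beth_{\alpha_0} \supseteq \beth_{\alpha_1}$, etc.), one sees that $\Coor_{\beta_1}^{-1}(\X_{\beta_1, Y, Z})$ is infinite for every nonempty cylinder provided: \emph{for every finite $Y', Z' \subseteq I_{\Rel, \beta_0}$ with $\X_{\beta_0, Y', Z'} \neq \emptyset$, each of $\beth_{\alpha_0}$, $\beth_{\alpha_1}$, and $\chi \setminus (\beth_{\alpha_0} \cup \beth_{\alpha_1})$ meets $\Coor_{\beta_0}^{-1}(\X_{\beta_0, Y', Z'})$ in an infinite set}. (When neither $\alpha_0$ nor $\alpha_1$ appears, infinitude is already immediate from Boolean saturation of $\chi$ at stage $\beta_0$.)

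To meet that requirement, observe that $\beta_0 < \beta_1 < \aleph_1$ and $h(\alpha) \geq \alpha$ force $I_{\Rel, \beta_0}$ to be countable, so there are only countably many cylinders $\X_{\beta_0, Y', Z'}$; enumerate the nonempty ones as $(C_n)_n$. By Boolean saturation of $\chi$ each $\Coor_{\beta_0}^{-1}(C_n)$ is infinite, so a straightforward recursion of length $\omega$ (at stage $(n,k)$ pick three new points of $\Coor_{\beta_0}^{-1}(C_n)$ not yet used and distinct from $x_0, x_1$) yields pairwise distinct $p_{n,k}, q_{n,k}, r_{n,k} \in \Coor_{\beta_0}^{-1}(C_n)$. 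Setting $\beth_{\alpha_0} = \{x_0\} \cup \{p_{n,k} : n,k \in \omega\}$ and $\beth_{\alpha_1} = \{x_1\} \cup \{q_{n,k} : n,k \in \omega\}$ makes them disjoint, puts $x_0, x_1$ as required, and the $r_{n,k}$ witness that $\chi \setminus (\beth_{\alpha_0} \cup \beth_{\alpha_1})$ meets each $\Coor_{\beta_0}^{-1}(C_n)$ infinitely. Combined with the previous paragraph this makes $(\chi, \{\beth_{\alpha}\}_{\alpha \in I_{\Rel, \beta_1}})$ a Boolean saturated model of $\Gamma_{\beta_1}$.

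I expect the only delicate point to be the middle step: correctly cataloguing which basic cylinders of $\X_{\beta_1}$ are nonempty and translating each into a concrete demand on $\beth_{\alpha_0}$, $\beth_{\alpha_1}$, or their complement. Nothing there is hard, but it is the place where an error (e.g.\ forgetting the ``$\{\alpha_0,\alpha_1\}\not\subseteq Z$'' exclusion, or overlooking that negated-membership demands are subsumed once the positive ones are met) would break the argument; once the three demands are isolated, countability of $I_{\Rel, \beta_0}$ reduces everything to a standard diagonalization.
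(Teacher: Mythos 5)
Your proposal is correct and is essentially the paper's own argument: both reduce the problem to a countable diagonalization over the (countably many) nonempty cylinders $\X_{\beta_0,Y,Z}$, distributing infinitely many fresh points of each preimage $\Coor_{\beta_0}^{-1}(\X_{\beta_0,Y,Z})$ among $\beth_{\alpha_0}$, $\beth_{\alpha_1}$, and $\chi\setminus(\beth_{\alpha_0}\cup\beth_{\alpha_1})$ while seeding $x_0\in\beth_{\alpha_0}$ and $x_1\in\beth_{\alpha_1}$. The only differences are cosmetic (the paper interleaves via a single enumeration with repetitions mod $3$ and dumps leftover points into $\beth_{\alpha_0}$ rather than the complement) plus the fact that you spell out the cylinder-by-cylinder verification that the paper leaves as ``straightforward.''
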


\begin{proof}  Let $\{(Y_n, Z_n)\}_{n\in \omega}$ be an enumeration of all ordered pairs $(Y, Z)$ with $Y, Z \subseteq I_{\Rel, \beta_0}$ finite and $\X_{\gamma_0, Y, Z} \neq \emptyset$, and such that for each $n\in \omega$ there are infinitely many $n'$ in each of  $3\omega$, $3\omega +1$, and $3\omega + 2$ such that $(Y_n, Z_n) = (Y_{n'}, Z_{n'})$.  We'll inductively construct a sequence of finite sets $F_{-1} \subseteq F_0 \subseteq F_1 \subseteq \cdots$ as well as assign points to $\beth_{\alpha_0}, \beth_{\alpha_1}, \chi \setminus (\beth_{\alpha_0} \cup \beth_{\alpha_1})$.

Let $x_0 \in \beth_{\alpha_0}$ and $x_1 \in \beth_{\alpha_1}$ and let $F_{-1} = \{x_0, x_1\}$.  If $n \geq 0$ and we have already defined $F_{n-1}$ we select $x\in \Coor_{\beta_0}^{-1}(\X_{Y_n, Z_n, \beta_0}) \setminus F_{n-1}$ and assign $x$ in the following way:

\begin{center}
$\begin{cases} x\in \beth_{\alpha_0}$ if $n\in 3\omega\\ x\in \beth_{\alpha_1}$ if $n\in 3\omega+1\\ x\in \chi\setminus (\beth_{\alpha_0}\cup\beth_{\alpha_1})$ if $n \in 3\omega +2 \end{cases}$
\end{center}

\noindent and let $F_n = F_{n-1}\cup \{x\}$.  For $x\in \chi \setminus \bigcup_{n\in \omega} F_n$ we let $x\in \beth_{\alpha_0}$.  The check that  $(\chi, \{\beth_{\alpha}\}_{\alpha \in I_{\Rel, \beta_1}})$ is a Boolean saturated model of $\Gamma_{\beta_1}$ is straightforward.
\end{proof}

\begin{lemma}\label{refinecaseb}  Let $\beta_1 < \aleph_1$ and $\Gamma_{\beta_1}$ satisfy $\dagger$.  Suppose $\Gamma_{\beta_1}$ is consistent and that $\beta_0$ is the largest element in $I_{\Log, \beta_1}$.  Let $(\chi, \{\beth_{\alpha}\}_{\alpha\in I_{\Rel, \beta_0}})$ be a Boolean saturated model of $\Gamma_{\beta_0}$.

If $\Theta_{\beta_0}$ is of type (b) and $x_2 \in \beth_{\alpha_0} \cap \beth_{\alpha_1}$, we can define $\beth_{\alpha_2} \subseteq \chi$ so that $(\chi, \{\beth_{\alpha}\}_{\alpha \in I_{\Rel, \beta_1}})$ is a Boolean saturated model of $\Gamma_{\beta_1}$ and $x_2\in \beth_{\alpha_2}$.
\end{lemma}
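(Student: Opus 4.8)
The plan is to imitate Lemma \ref{refinecasea}, assembling $\beth_{\alpha_2}$ one point at a time by a bookkeeping argument over $\omega$. As $\beta_0$ is the largest element of $I_{\Log, \beta_1}$, we have $\Gamma_{\beta_1} = \Gamma_{\beta_0 + 1}$ and $I_{\Rel, \beta_1} = I_{\Rel, \beta_0 + 1} = I_{\Rel, \beta_0} \cup \{\alpha_2\}$, so Lemma \ref{nextstageb} applies; using that $\Gamma_{\beta_1}$ is consistent it tells us that a $\sigma \in \X_{\beta_0}$ extends to $\X_{\beta_1}$ by $\sigma(\alpha_2) = 1$ precisely when $\sigma(\alpha_0) = \sigma(\alpha_1) = 1$, and always extends by $\sigma(\alpha_2) = 0$. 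I will leave the interpretations of the $\B_\alpha$ with $\alpha \in I_{\Rel, \beta_0}$ untouched, so the expanded structure $(\chi, \{\beth_\alpha\}_{\alpha \in I_{\Rel, \beta_1}})$ automatically models $\Gamma_{\beta_0}$, and it will model $\Theta_{\beta_0}$ (type (b)) as soon as $\beth_{\alpha_2} \subseteq \beth_{\alpha_0} \cap \beth_{\alpha_1}$ and $x_2 \in \beth_{\alpha_2}$. Hence the two things to arrange are this containment and Boolean saturation at level $\beta_1$.

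First I would fix an enumeration $\{(Y_n, Z_n)\}_{n \in \omega}$ of all pairs of finite subsets $Y, Z \subseteq I_{\Rel, \beta_0}$ with $\X_{\beta_0, Y, Z} \neq \emptyset$ — there are only countably many, $I_{\Rel, \beta_0}$ being countable, and at least $(\emptyset, \emptyset)$ qualifies by consistency — arranged so that each pair recurs infinitely often among even indices and infinitely often among odd indices. Then I would build an increasing chain of finite sets $F_{-1} = \{x_2\} \subseteq F_0 \subseteq F_1 \subseteq \cdots$, committing $x_2$ to $\beth_{\alpha_2}$: at stage $n$ pick $x \in \Coor_{\beta_0}^{-1}(\X_{\beta_0, Y_n, Z_n}) \setminus F_{n-1}$ — available because that fibre is infinite by Boolean saturation of the given model while $F_{n-1}$ is finite — commit $x$ to $\beth_{\alpha_2}$ if $n$ is even and $\alpha_0, \alpha_1 \in Z_n$ (legitimate, since then $\Coor_{\beta_0}(x)(\alpha_0) = \Coor_{\beta_0}(x)(\alpha_1) = 1$, i.e. $x \in \beth_{\alpha_0} \cap \beth_{\alpha_1}$), commit $x$ to $\chi \setminus \beth_{\alpha_2}$ otherwise, and set $F_n = F_{n-1} \cup \{x\}$. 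Finally every point of $\chi \setminus \bigcup_n F_n$ goes into $\chi \setminus \beth_{\alpha_2}$. Then $\beth_{\alpha_2}$ is exactly the countable set of points committed to it, all lying in $\beth_{\alpha_0} \cap \beth_{\alpha_1}$, with $x_2 \in \beth_{\alpha_2}$, which gives the model requirement.

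For Boolean saturation I would take finite $Y, Z \subseteq I_{\Rel, \beta_1}$ with $\X_{\beta_1, Y, Z} \neq \emptyset$ (so $Y \cap Z = \emptyset$) and split on whether $\alpha_2$ lies in neither, in $Z$, or in $Y$. When $\alpha_2 \notin Y \cup Z$, $\Coor_{\beta_1}^{-1}(\X_{\beta_1, Y, Z}) = \Coor_{\beta_0}^{-1}(\X_{\beta_0, Y, Z})$ is infinite. When $\alpha_2 \in Z$, Lemma \ref{nextstageb} forces $\alpha_0, \alpha_1 \notin Y$ and $\X_{\beta_0, Y, Z''} \neq \emptyset$ for $Z'' = (Z \setminus \{\alpha_2\}) \cup \{\alpha_0, \alpha_1\}$, and since membership in $\beth_{\alpha_2}$ entails membership in $\beth_{\alpha_0} \cap \beth_{\alpha_1}$ one checks $\Coor_{\beta_1}^{-1}(\X_{\beta_1, Y, Z}) = \Coor_{\beta_0}^{-1}(\X_{\beta_0, Y, Z'')} \cap \beth_{\alpha_2}$; because $\alpha_0, \alpha_1 \in Z''$ the pair $(Y, Z'')$ recurs at infinitely many even stages, each contributing a fresh point of $\Coor_{\beta_0}^{-1}(\X_{\beta_0, Y, Z''})$ to $\beth_{\alpha_2}$, so that set is infinite. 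When $\alpha_2 \in Y$, with $Y' = Y \setminus \{\alpha_2\}$ one has $\X_{\beta_0, Y', Z} \neq \emptyset$ and $\Coor_{\beta_1}^{-1}(\X_{\beta_1, Y, Z}) = \Coor_{\beta_0}^{-1}(\X_{\beta_0, Y', Z}) \setminus \beth_{\alpha_2}$; the pair $(Y', Z)$ recurs at infinitely many odd stages, each contributing a fresh point of that fibre to $\chi \setminus \beth_{\alpha_2}$, so the set is again infinite. Together with $\B_\alpha \neq \emptyset$ for all $\alpha \in I_{\Rel, \beta_1}$ (inherited for $\alpha \neq \alpha_2$, and $x_2 \in \beth_{\alpha_2}$), this shows $(\chi, \{\beth_\alpha\}_{\alpha \in I_{\Rel, \beta_1}})$ is a Boolean saturated model of $\Gamma_{\beta_1}$.

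The only genuine obstacle I anticipate is the apparent conflict in the last step between needing infinitely many points of a given fibre inside $\beth_{\alpha_2}$ and infinitely many outside it; this evaporates because at every finite stage only finitely many points have been committed, so a fresh point is always available for whichever demand the current index makes, and $\beth_{\alpha_2}$ is assembled as a countable increasing union that still meets each infinite fibre in a co-infinite subset whenever that is required. Everything else is routine bookkeeping of precisely the kind already carried out in Lemma \ref{refinecasea}.
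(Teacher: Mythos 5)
Your proposal is correct and follows essentially the same back-and-forth bookkeeping construction as the paper's proof: the same enumeration of pairs $(Y,Z)$ recurring infinitely often at even and odd indices, the same finite-stage commitment of fresh points into or out of $\beth_{\alpha_2}$. The only (harmless) deviations are that you decide membership in $\beth_{\alpha_2}$ by whether $\alpha_0,\alpha_1\in Z_n$ rather than by whether the chosen point happens to lie in $\beth_{\alpha_0}\cap\beth_{\alpha_1}$, and that you send all uncommitted leftover points outside $\beth_{\alpha_2}$ where the paper sends those lying in $\beth_{\alpha_0}\cap\beth_{\alpha_1}$ into it; both choices yield a Boolean saturated model, and your saturation check is the verification the paper leaves to the reader.
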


\begin{proof}   Let $\{(Y_n, Z_n)\}_{n\in \omega}$ be an enumeration of all ordered pairs $(Y, Z)$ with $Y, Z \subseteq I_{\Rel, \beta_0}$ finite and $\X_{\gamma_0, Y, Z} \neq \emptyset$, and such that for each $n\in \omega$ there are infinitely many $n'$ in each of  $2\omega$ and $2\omega +1$ such that $(Y_n, Z_n) = (Y_{n'}, Z_{n'})$.  We'll inductively construct a sequence of finite sets $F_{-1} \subseteq F_0 \subseteq F_1 \subseteq \cdots$ as well as assign points to $\beth_{\alpha_2}$ and to $\chi \setminus \beth_{\alpha_2}$.

Let $x_2\in \beth_{\alpha_2}$ and let $F_{-1} = \{x_2\}$.  If $n\geq 0$ and we have already defined $F_{n-1}$ we select $x\in \Coor_{\beta_0}^{-1}(\X_{\beta_0, Y_n, Z_n}) \setminus F_{n-1}$ and assign $x$ in the following way:

\begin{center}
$\begin{cases}x\in \beth_{\alpha_2}$ if $x\in \beth_{\alpha_0}\cap \beth_{\alpha_1}$ and $n\in 2\omega\\x\in \chi\setminus \beth_{\alpha_2}$ if $x\in \beth_{\alpha_0}\cap \beth_{\alpha_1}$ and $n\in 2\omega + 1\\x\in \chi\setminus \beth_{\alpha_2}$ if $x\notin \beth_{\alpha_0}\cap \beth_{\alpha_1}  \end{cases}$
\end{center}

\noindent and let $F_n = F_{n-1} \cup \{x\}$.  Assign $x\in \chi \setminus \bigcup_{n\in \omega} F_n$ by the rule

\begin{center}
$\begin{cases}x\in \beth_{\alpha_2}$ if $x\in \beth_{\alpha_0}\cap \beth_{\alpha_1}\\x\in \chi \setminus \beth_{\alpha_2}$ otherwise  $ \end{cases}$
\end{center}

\noindent Again the check that the conclusion is fulfilled is left to the reader.
\end{proof}

\begin{lemma}\label{refinecasec}  Let $\beta_1 < \aleph_1$ and $\Gamma_{\beta_1}$ satisfy $\dagger$.  Suppose $\Gamma_{\beta_1}$ is consistent and that $\beta_0$ is the largest element in $I_{\Log, \beta_1}$.  Let $(\chi, \{\beth_{\alpha}\}_{\alpha \in I_{\Rel, \beta_0}})$ be a Boolean saturated model of $\Gamma_{\beta_0}$.

If $\Theta_{\beta_0}$ is of type (c) and $x_0, x_1$ are distinct points in $\chi \setminus \beth_{\alpha_2}$, we can define $\beth_{\alpha_0}, \beth_{\alpha_1} \subseteq \chi$ so that $(\chi, \{\beth_{\alpha}\}_{\alpha \in I_ {\Rel, \beta_1}})$ is a Boolean saturated model of $\Gamma_{\beta_1}$ and $x_0 \in \beth_{\alpha_0}$ and $x_1 \in \beth_{\alpha_1}$.
\end{lemma}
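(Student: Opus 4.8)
The plan is to copy the pattern of Lemmas \ref{refinecasea} and \ref{refinecaseb}. Since $\beta_0$ is the largest element of $I_{\Log,\beta_1}$ and $\Theta_{\beta_0}$ has type (c), we get $\Gamma_{\beta_1}=\Gamma_{\beta_0}\cup\{\Theta_{\beta_0}\}$ and $I_{\Rel,\beta_1}=I_{\Rel,\beta_0}\cup\{\alpha_0,\alpha_1\}$, with $\alpha_2\in I_{\Rel,\beta_0}$; so it is enough to define $\beth_{\alpha_0},\beth_{\alpha_1}\subseteq\chi$ with $\beth_{\alpha_0}\cap\beth_{\alpha_1}\subseteq\beth_{\alpha_2}$, $\beth_{\alpha_0}\cup\beth_{\alpha_1}\cup\beth_{\alpha_2}=\chi$, both of $\beth_{\alpha_0},\beth_{\alpha_1}$ nonempty, $x_0\in\beth_{\alpha_0}$, $x_1\in\beth_{\alpha_1}$, and the enlarged structure Boolean saturated. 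Record the \emph{type} of a point $x$ as the pair $(\epsilon_0,\epsilon_1)\in\{0,1\}^2$ with $\epsilon_i=1$ iff $x\in\beth_{\alpha_i}$; the model conditions say exactly that a point of $\beth_{\alpha_2}$ may be assigned any of the four types, while a point outside $\beth_{\alpha_2}$ must be assigned type $(1,0)$ or $(0,1)$.

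As in Lemma \ref{refinecasea} enumerate $\{(Y_n,Z_n)\}_{n\in\omega}$, all ordered pairs $(Y,Z)$ of finite subsets of $I_{\Rel,\beta_0}$ with $\X_{\beta_0,Y,Z}\neq\emptyset$, so that each pair occurs infinitely often in each residue class modulo $4$. Put $F_{-1}=\{x_0,x_1\}$ and assign $x_0$ the type $(1,0)$ and $x_1$ the type $(0,1)$ (legal since $x_0,x_1\notin\beth_{\alpha_2}$). At stage $n\geq 0$ choose a fresh $x\in\Coor_{\beta_0}^{-1}(\X_{\beta_0,Y_n,Z_n})\setminus F_{n-1}$ (possible by Boolean saturation of the given model), set $F_n=F_{n-1}\cup\{x\}$, and assign: if $x\notin\beth_{\alpha_2}$, type $(1,0)$ for $n\equiv 0,1\pmod 4$ and type $(0,1)$ for $n\equiv 2,3\pmod 4$; if $x\in\beth_{\alpha_2}$, the types $(0,0),(1,0),(0,1),(1,1)$ according as $n\equiv 0,1,2,3\pmod 4$. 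Every point of $\chi$ not chosen during the construction is assigned type $(1,0)$. No point outside $\beth_{\alpha_2}$ ever receives type $(0,0)$ or $(1,1)$, so $\beth_{\alpha_0}\cap\beth_{\alpha_1}\subseteq\beth_{\alpha_2}$ and $\beth_{\alpha_0}\cup\beth_{\alpha_1}\cup\beth_{\alpha_2}=\chi$; and $x_0,x_1$ witness nonemptiness. Restricting the structure to $I_{\Rel,\beta_0}$ returns the given model, so every $\Theta_\alpha$ with $\alpha\in I_{\Log,\beta_0}$ still holds, and $\Theta_{\beta_0}$ holds by the previous sentence; hence $\Gamma_{\beta_1}$ is modeled.

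For Boolean saturation, let $Y,Z\subseteq I_{\Rel,\beta_1}$ be finite with $\X_{\beta_1,Y,Z}\neq\emptyset$ and fix $\tau\in\X_{\beta_1,Y,Z}$. Replacing $Y$ by $Y\cup\{\alpha_2\}$ when $\tau(\alpha_2)=0$ and $Z$ by $Z\cup\{\alpha_2\}$ when $\tau(\alpha_2)=1$ shrinks $\X_{\beta_1,Y,Z}$ but keeps $\tau$ inside it, so we may assume $\alpha_2\in Y\cup Z$; it then suffices to find infinitely many $x$ with $\Coor_{\beta_1}(x)\in\X_{\beta_1,Y,Z}$. Put $Y_0=Y\cap I_{\Rel,\beta_0}$, $Z_0=Z\cap I_{\Rel,\beta_0}$; then $\X_{\beta_0,Y_0,Z_0}\neq\emptyset$ (Lemma \ref{cutoff}), so $(Y_0,Z_0)$ is enumerated, and the points chosen at stages with $(Y_n,Z_n)=(Y_0,Z_0)$ all lie in $\beth_{\alpha_2}$ if $\alpha_2\in Z_0$ and all lie outside $\beth_{\alpha_2}$ if $\alpha_2\in Y_0$. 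Since that pair occurs infinitely often in each residue class modulo $4$, each type legal for such points is realized infinitely often among $\Coor_{\beta_0}^{-1}(\X_{\beta_0,Y_0,Z_0})$; and $(\tau(\alpha_0),\tau(\alpha_1))$ is such a legal type (the type (c) constraint forces it into $\{(1,0),(0,1)\}$ when $\tau(\alpha_2)=0$) and is compatible with the conditions $Y,Z$ impose on $\alpha_0,\alpha_1$. Any such $x$ then has $\Coor_{\beta_1}(x)\in\X_{\beta_1,Y,Z}$, and there are infinitely many of them, so Boolean saturation holds. I expect the content of this last paragraph --- the reduction to the case $\alpha_2\in Y\cup Z$, together with the bookkeeping that exactly four types are available inside $\beth_{\alpha_2}$ and exactly two outside --- to be the only real obstacle; everything else parallels the earlier lemmas, and in the same spirit the remaining routine verifications can be left to the reader.
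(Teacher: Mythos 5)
Your proposal is correct and follows essentially the same construction as the paper: the same enumeration of pairs $(Y_n,Z_n)$ occurring infinitely often in each residue class mod $4$, the same seeding of $x_0, x_1$ with types $(1,0)$ and $(0,1)$, the same case split on whether the chosen point lies in $\beth_{\alpha_2}$ (four legal types inside, two outside), and the same default assignment for unchosen points. The only differences are cosmetic (the paper routes the outside-$\beth_{\alpha_2}$ points through residues mod $2$ rather than paired residues mod $4$) and that you write out the Boolean-saturation check, including the reduction to $\alpha_2 \in Y \cup Z$, which the paper leaves to the reader.
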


\begin{proof}  Let $\{(Y_n, Z_n)\}_{n\in \omega}$ be an enumeration of all ordered pairs $(Y, Z)$ with $Y, Z \subseteq I_{\Rel,\beta_0}$ finite and $\X_{\beta_0, Y, Z} \neq \emptyset$, and such that for each $n\in \omega$ there are infinitely many $n'$ in each of  $4\omega$, $4\omega +1$, $4\omega +2$, and  $4\omega + 3$ such that $(Y_n, Z_n) = (Y_{n'}, Z_{n'})$.  We'll inductively construct a sequence of finite sets $F_{-1} \subseteq F_0 \subseteq F_1 \subseteq \cdots$ as well as assign points to $\beth_{\alpha_0}\setminus \beth_{\alpha_1}$, $\beth_{\alpha_1}\setminus \beth_{\alpha_0}$, $\beth_{\alpha_0}\cap \beth_{\alpha_1}$ and $\chi \setminus (\beth_{\alpha_0} \cup \beth_{\alpha_1})$.

Let $x_0 \in \beth_{\alpha_0} \setminus \beth_{\alpha_1}$, $x_1 \in \beth_{\alpha_1}\setminus \beth_{\alpha_0}$ and set $F_{-1} = \{x_0, x_1\}$.  If $n \geq 0$ we select $x\in \Coor_{\beta_0}^{-1}(\X_{\beta_0, Y_n, Z_n}) \setminus F_{n-1}$ and assign

\begin{center}
$\begin{cases}x \in  \beth_{\alpha_0} \setminus \beth_{\alpha_1}$ if $x\notin \beth_{\alpha_2}$ and $n\in 2\omega\\ x\in \beth_{\alpha_1}\setminus \beth_{\alpha_0}$ if $x\notin \beth_{\alpha_2}$ and $n\in 2\omega +1\\ x\in \beth_{\alpha_0}\cap \beth_{\alpha_1}$ if $x\in \beth_{\alpha_2}$ and $n\in 4\omega\\ x\in \beth_{\alpha_0} \setminus \beth_{\alpha_1}$ if $x\in \beth_{\alpha_2}$ and $n\in 4\omega +1\\ x\in \beth_{\alpha_1}\setminus \beth_{\alpha_0}$ if $x\in \beth_{\alpha_2}$ and $n\in 4\omega +2\\ x\in \chi \setminus (\beth_{\alpha_0}\cup\beth_{\alpha_1})$ if $x\in \beth_{\alpha_2}$ and $n\in 4\omega +3  \end{cases}$
\end{center}

\noindent and let $F_n = F_{n-1}\cup \{x\}$.  Assign $x\in \chi \setminus \bigcup_{n\in \omega}F_n$ by $x\in \beth_{\alpha_0}\setminus \beth_{\alpha_1}$.
\end{proof}

\begin{lemma}\label{refinecased}   Let $\beta_1 < \aleph_1$ and $\Gamma_{\beta_1}$ satisfy $\dagger$.  Suppose $\Gamma_{\beta_1}$ is consistent and that $\beta_0$ is the largest element in $I_{\Log, \beta_1}$.  Let $(\chi, \{\beth_{\alpha}\}_{\alpha \in I_{\Rel, \beta_0}})$ be a Boolean saturated model of $\Gamma_{\beta_0}$.

If $\Theta_{\beta_0}$ is of type (d) and $x_2 \in \beth_{\alpha_2}$, we can define $\beth_{\alpha_0}, \beth_{\alpha_1} \subseteq \chi$ so that $(\chi, \{\beth_{\alpha}\}_{\alpha \in I_{\Rel, \beta_1}})$ is a Boolean saturated model of $\Gamma_{\beta_1}$ and $x_2\in \beth_{\alpha_0}$.
\end{lemma}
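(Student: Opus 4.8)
The plan is to follow the templates of Lemmas \ref{refinecasea} and \ref{refinecasec} essentially verbatim, adjusting the bookkeeping to type (d). Recall that here $\alpha_1 = \alpha_0 + 1 = \beta_0$ and $\alpha_2 < \alpha_0$, that $I_{\Rel, \beta_1} \setminus I_{\Rel, \beta_0} = \{\alpha_0, \alpha_1\}$, and that by Lemma \ref{nextstaged} the set $\X_{\beta_1}$ consists of the triples $(\sigma, i, j) \in \X_{\beta_0} \times \{0,1\}^{\{\alpha_0, \alpha_1\}}$ satisfying $[\sigma(\alpha_2) = 1 \vee j = 1] \wedge [i = 1 \Rightarrow [\sigma(\alpha_2) = 1 \wedge j = 0]]$. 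Reading this off, a point $x$ with $\Coor_{\beta_0}(x) = \sigma$ and $\sigma(\alpha_2) = 1$ may legally be placed into any one of the three regions $\beth_{\alpha_0} \setminus \beth_{\alpha_1}$, $\beth_{\alpha_1} \setminus \beth_{\alpha_0}$, $\chi \setminus (\beth_{\alpha_0} \cup \beth_{\alpha_1})$, whereas a point with $\sigma(\alpha_2) = 0$ is forced into $\beth_{\alpha_1} \setminus \beth_{\alpha_0}$; crucially $\beth_{\alpha_1} \setminus \beth_{\alpha_0}$ is a legal destination in every case.

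Accordingly I would fix an enumeration $\{(Y_n, Z_n)\}_{n \in \omega}$ of the ordered pairs of finite $Y, Z \subseteq I_{\Rel, \beta_0}$ with $\X_{\beta_0, Y, Z} \neq \emptyset$, arranged so that each pair recurs infinitely often in each of $3\omega$, $3\omega + 1$, $3\omega + 2$ (three residue classes, one per legal region of a $\beth_{\alpha_2}$-point). Then I would build finite sets $F_{-1} \subseteq F_0 \subseteq \cdots$, starting with $F_{-1} = \{x_2\}$ and $x_2 \in \beth_{\alpha_0} \setminus \beth_{\alpha_1}$ (legal since $x_2 \in \beth_{\alpha_2}$, and this secures $\B_{\alpha_0} \neq \emptyset$). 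At stage $n$, using Boolean saturation of $(\chi, \{\beth_\alpha\}_{\alpha \in I_{\Rel, \beta_0}})$ to see that $\Coor_{\beta_0}^{-1}(\X_{\beta_0, Y_n, Z_n})$ is infinite, I would choose $x$ in that preimage outside $F_{n-1}$, put it into $\beth_{\alpha_1} \setminus \beth_{\alpha_0}$ if $x \notin \beth_{\alpha_2}$, and otherwise into $\beth_{\alpha_0} \setminus \beth_{\alpha_1}$, $\beth_{\alpha_1} \setminus \beth_{\alpha_0}$, or $\chi \setminus (\beth_{\alpha_0} \cup \beth_{\alpha_1})$ according as $n$ lies in $3\omega$, $3\omega + 1$, or $3\omega + 2$; then set $F_n = F_{n-1} \cup \{x\}$. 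Finally every $x \notin \bigcup_n F_n$ is put into $\beth_{\alpha_1} \setminus \beth_{\alpha_0}$.

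The verification that $(\chi, \{\beth_\alpha\}_{\alpha \in I_{\Rel, \beta_1}})$ is a Boolean saturated model of $\Gamma_{\beta_1}$ then runs as in the earlier lemmas. It models $\Gamma_{\beta_1}$ because each point's extended coordinate lies in $\X_{\beta_1}$ (equivalently, the first three conjuncts of $\Theta_{\beta_0}$ hold), because $x_2$ witnesses $\B_{\alpha_0} \neq \emptyset$ and any point assigned at a stage in $3\omega + 1$ with $(Y_n, Z_n) = (\emptyset, \emptyset)$ witnesses $\B_{\alpha_1} \neq \emptyset$, and because the reduct to $I_{\Rel, \beta_0}$ is unchanged and already models $\Gamma_{\beta_0}$. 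It is Boolean saturated because, given finite $Y, Z \subseteq I_{\Rel, \beta_1}$ with $\X_{\beta_1, Y, Z} \neq \emptyset$, writing $Y = Y' \cup Y''$ and $Z = Z' \cup Z''$ with $Y'', Z'' \subseteq \{\alpha_0, \alpha_1\}$, the trace of $\X_{\beta_1, Y, Z}$ on $\X_{\beta_0}$ is $\X_{\beta_0, Y', Z'}$ or $\X_{\beta_0, Y', Z' \cup \{\alpha_2\}}$ (the latter exactly when $(Y'', Z'')$ forces a $\beth_{\alpha_2}$-destination, i.e. $\alpha_0 \in Z''$ or $\alpha_1 \in Y''$), this trace is nonempty, and so it recurs as some $(Y_n, Z_n)$ in the residue class mod $3$ which sends the chosen point into a region compatible with $(Y'', Z'')$, producing infinitely many suitable points.

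The only genuinely delicate point — the one I would actually spell out — is that last step: confirming that the three residue classes, together with the forced assignment $\beth_{\alpha_1} \setminus \beth_{\alpha_0}$ for points outside $\beth_{\alpha_2}$, realize every legal $(i,j)$ over every $\sigma \in \X_{\beta_0, Y', Z'}$, so that no nonempty $\X_{\beta_1, Y, Z}$ is overlooked. This is a short finite case check over the few compatible choices of $(Y'', Z'')$, entirely parallel to the corresponding step in Lemmas \ref{refinecasea} and \ref{refinecasec}, and the remaining routine checks I would leave to the reader.
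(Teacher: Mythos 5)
Your construction is essentially identical to the paper's own proof: the same three-residue-class enumeration of nonempty $\X_{\beta_0,Y,Z}$, the seed $x_2\in\beth_{\alpha_0}\setminus\beth_{\alpha_1}$, the forced placement of points outside $\beth_{\alpha_2}$ into $\beth_{\alpha_1}\setminus\beth_{\alpha_0}$ with the three legal regions cycled for points inside $\beth_{\alpha_2}$, and the default $\beth_{\alpha_1}\setminus\beth_{\alpha_0}$ for unprocessed points (the paper merely permutes which residue class corresponds to which region, which is immaterial). The verification you sketch, including the trace analysis for Boolean saturation, is correct and in fact more explicit than the paper, which omits it.
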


\begin{proof}   Let $\{(Y_n, Z_n)\}_{n\in \omega}$ be an enumeration of all ordered pairs $(Y, Z)$ with $Y, Z \subseteq I_{\Rel, \beta_0}$ finite and $\X_{\beta_0, Y, Z} \neq \emptyset$, and such that for each $n\in \omega$ there are infinitely many $n'$ in each of  $3\omega$, $3\omega +1$, and $3\omega +2$ such that $(Y_n, Z_n) = (Y_{n'}, Z_{n'})$.  We'll inductively construct a sequence of finite sets $F_{-1} \subseteq F_0 \subseteq F_1 \subseteq \cdots$ as well as assign points to $\beth_{\alpha_1}\setminus \beth_{\alpha2}$, $\beth_{\alpha_2}\setminus \beth_{\alpha_1}$, $\beth_{\alpha_1}\cap \beth_{\alpha_2}$ and $\chi \setminus (\beth_{\alpha_1} \cup \beth_{\alpha_2})$.

Let $x_2\in \beth_{\alpha_0}$ and $F_{-1} = \{x_2\}$.  For $n \geq 0$ we select $x\in \Coor_{\beta_0}^{-1}(\X_{\beta_0, Y_n, Z_n}) \setminus F_{n-1}$ and assign

\begin{center}
$\begin{cases}x \in  \chi \setminus(\beth_{\alpha_0} \cup \beth_{\alpha_1})$ if $x\in \beth_{\alpha_2}$ and $n\in 3\omega\\ x\in \beth_{\alpha_0} \setminus \beth_{\alpha_1}$ if $x\in \beth_{\alpha_2}$ and $n\in 3\omega+1\\ x\in \beth_{\alpha_1}\setminus \beth_{\alpha_0}$ if $x\in \beth_{\alpha_2}$ and $n\in 3\omega +2\\ x\in \beth_{\alpha_1}\setminus \beth_{\alpha_0}$ if $x\notin \beth_{\alpha_2} \end{cases}$
\end{center}

\noindent and let $F_n = F_{n-1} \cup\{x\}$.  For $x\in \chi \setminus \bigcup_{n\in \omega} F_n$ we assign $x\in \beth_{\alpha_1} \setminus \beth_{\alpha_0}$.
\end{proof}

\begin{definitions}(see \cite{Jec})  A subset $E$ of an ordinal $\alpha$ is \emph{bounded in $\alpha$} provided there exists $\beta< \alpha$ which is an upper bound on $E$.  A subset $C$ of $\aleph_1$ is \emph{club} if it is unbounded in $\aleph_1$ and closed under the order topology in $\aleph_1$.  The intersection of two club sets in $\aleph_1$ is again a club set.  A subset $E$ of $\aleph_1$ is \emph{stationary} if it has nonempty intersection with every club subset of $\aleph_1$.  The intersection of a club set and a stationary set is again stationary.
\end{definitions}

We quote Jensen's $\diamondsuit$ principle (see \cite[Lemma 6.5]{Jen} or \cite[Theorem 13.21]{Jec}):

\begin{center}

\noindent $\diamondsuit$:  There exists a sequence $\{S_{\alpha}\}_{\alpha< \aleph_1}$ such that $S_{\alpha} \subseteq \alpha$ and for any $J\subseteq \aleph_1$ the set $\{\alpha < \aleph_1 \mid J\cap \alpha = S_{\alpha}\}$ is stationary in $\aleph_1$.

\end{center}

The sequence which is claimed in $\diamondsuit$ is often called a $\diamondsuit$\emph{-sequence}.  Jensen proved that $\diamondsuit$ holds in G\"odel's constructible universe $L$.  As a result,  if ZFC is consistent then so is ZFC + $\diamondsuit$.

\begin{remark}\label{CH}  An obvious consequence of $\diamondsuit$ is that $\aleph_1 = 2^{\aleph_0}$, since the function $2^{\omega} \rightarrow \aleph_1$ given by $J \mapsto \min \{\omega \leq \alpha <\aleph_1 \mid S_{\alpha} = J\}$ is injective.  Another well known consequence is the following:
\end{remark}

\begin{lemma}\label{twosetsnotone} (ZFC + $\diamondsuit$)  Let $T\subseteq \aleph_1$ be a club. There exists a sequence $\{(S_{\alpha, 0}, S_{\alpha, 1})\}_{\alpha \in T}$ such that for any pair $(J_0, J_1)$ of sets such that $J_0, J_1 \subseteq \aleph_1$ the set $\{\alpha \in T\mid \alpha\cap J_0 = S_{\alpha, 0} \text{ and } \alpha \cap J_1 = S_{\alpha, 1}\}$ is stationary.
\end{lemma}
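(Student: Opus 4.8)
The plan is to deduce this from the raw $\diamondsuit$ principle by coding the pair $(J_0, J_1)$ as a single subset of $\aleph_1$ and then decoding the $\diamondsuit$-sequence. First I would fix a bijection $\pi \colon \aleph_1 \to \aleph_1 \times \{0,1\}$, writing $\pi(\delta) = (\pi_0(\delta), \pi_1(\delta))$, and consider the set $C$ of ordinals $\alpha < \aleph_1$ that are simultaneously closed under the three maps $\delta \mapsto \pi_0(\delta)$, $\gamma \mapsto \pi^{-1}(\gamma, 0)$ and $\gamma \mapsto \pi^{-1}(\gamma, 1)$; equivalently, those $\alpha$ for which $\pi$ restricts to a bijection of $\alpha$ onto $\{(\gamma, i) : \gamma < \alpha,\ i \in \{0,1\}\}$. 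Since the common closure points of finitely (indeed countably) many functions $\aleph_1 \to \aleph_1$ form a club, $C$ is a club, and hence so is $C \cap T$.

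Next I would take a $\diamondsuit$-sequence $\{S_\alpha\}_{\alpha < \aleph_1}$ and define, for each $\alpha \in T$,
\[
S_{\alpha, i} = \{\gamma < \alpha : \pi^{-1}(\gamma, i) \in S_\alpha\} \qquad (i = 0, 1),
\]
the decoding of $S_\alpha$ along $\pi$; note $S_{\alpha,i} \subseteq \alpha$. This produces the sequence $\{(S_{\alpha,0}, S_{\alpha,1})\}_{\alpha \in T}$, and it remains to check the guessing property. Given $J_0, J_1 \subseteq \aleph_1$, I would set $J = \{\delta < \aleph_1 : \pi_0(\delta) \in J_{\pi_1(\delta)}\}$, i.e. $\delta \in J$ iff $\pi(\delta) = (\gamma, i)$ with $\gamma \in J_i$, and apply $\diamondsuit$ to get that $D = \{\alpha < \aleph_1 : J \cap \alpha = S_\alpha\}$ is stationary; then $D \cap C \cap T$ is stationary. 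For $\alpha \in D \cap C \cap T$, $i \in \{0,1\}$ and $\gamma < \alpha$ we have $\pi^{-1}(\gamma,i) < \alpha$ since $\alpha \in C$, so $\pi^{-1}(\gamma, i) \in S_\alpha = J \cap \alpha$ is equivalent to $\pi^{-1}(\gamma,i) \in J$, which is equivalent to $\gamma \in J_i$; hence $S_{\alpha, i} = J_i \cap \alpha = \alpha \cap J_i$. Thus $D \cap C \cap T \subseteq \{\alpha \in T : \alpha \cap J_0 = S_{\alpha, 0} \text{ and } \alpha \cap J_1 = S_{\alpha, 1}\}$, which is therefore stationary.

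I do not anticipate a genuine obstacle here: the only point requiring (routine) care is confirming that $C$ is a club, which guarantees that the coding of $(J_0, J_1)$ into $J$ and the decoding of $S_\alpha$ into $(S_{\alpha,0}, S_{\alpha,1})$ are compatible below a club of ordinals, and this is the standard closure-point argument.
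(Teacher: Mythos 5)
Your proof is correct and follows essentially the same route as the paper: both code the pair $(J_0,J_1)$ as a single subset $J$ of $\aleph_1$ via a pairing of $\aleph_1$ with $\aleph_1\times\{0,1\}$, apply $\diamondsuit$ to $J$, decode $S_\alpha$ into a pair, and observe that coding and decoding commute with truncation at $\alpha$ for all $\alpha$ in a suitable club. The only (immaterial) difference is that the paper uses the canonical lexicographic order isomorphism, whose closure points are exactly the limit ordinals, whereas you use an arbitrary bijection and pass to the club of its closure points.
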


\begin{proof}  Give the product $\aleph_1 \times \{0, 1\}$ the lexicographic order (initially comparing left coordinates).  The unique order isomorphism $f: \aleph_1 \times \{0, 1\} \rightarrow \aleph_1$ has $f(\alpha, 0) = \alpha$ for each limit ordinal $\alpha <\aleph_1$. For $i\in \{0, 1\}$ define $f_i: \aleph_1 \rightarrow \aleph_1$ by $f_i(\alpha) = f(\alpha, i)$, thus $f_0(\aleph_1) \sqcup f_1(\aleph_1) = \aleph_1$.  Moreover for each limit ordinal $\alpha$ we have $f_i(J \cap \alpha) = f_i(J) \cap \alpha$.

Let $\{S_{\alpha}\}_{\alpha < \aleph_1}$ be a $\diamondsuit$-sequence on $\aleph_1$.  Let $T'\subseteq T$ be the set of limit ordinals in $T$; this is also a club.  Let sequence $\{(S_{\alpha, 0}, S_{\alpha, 1})\}_{\alpha \in T'}$ be defined by $S_{\alpha, i} = f_i^{-1}(S_{\alpha})$.  Given $J_0, J_1 \subseteq \aleph_1$ we let $J = f_0(J_0) \sqcup f_1(J_1)$.  We have

\begin{center}

$\{\alpha \in T' \mid J_0\cap \alpha = S_{\alpha, 0} \text{ and }J_1 \cap \alpha = S_{\alpha, 1}\}$

$= \{\alpha \in T' \mid f_0(J_0 \cap \alpha) \sqcup f_1(J_1 \cap \alpha) = S_{\alpha}\}$

$= \{\alpha \in T' \mid (f_0(J_0)\cap \alpha)\sqcup (f_1(J_1)\cap \alpha) = S_{\alpha}\}$

$= \{\alpha \in T' \mid J\cap \alpha = S_{\alpha}\}$

$= T' \cap \{\alpha < \aleph_1 \mid J \cap \alpha = S_{\alpha}\}$

\end{center}

\noindent and this set is stationary as the intersection of a club and a stationary.  Letting $(S_{\alpha, 0}, S_{\alpha, 1}) = (\emptyset, \emptyset)$ for $\alpha \in T\setminus T'$ we obtain the desired sequence.

\end{proof}

\end{section}

\begin{section}{The Construction}\label{construction}

We will be defining collections and sets by induction.  These will include sets $I_{\Log}, I_{\Rel}, I_{\Conn} \subseteq \aleph_1$, a collection $\Sequ$ of ordered pairs of strictly increasing $\omega$-sequences, countable collections $\{\Bo^{\beta}\}_{\beta< \aleph_1}$ of subsets of $\aleph_1$, and a collection of first order sentences for unary predicates $\Gamma$.  We'll explain the interactions among these sets.

The set $\Gamma$ will be indexed by $I_{\Log}$: $\Gamma = \{\Theta_{\alpha}\}_{\alpha \in I_{\Log}}$ and $\Gamma$ will satisfy $\dagger$.  The set $I_{\Rel}$ will satisfy $I_{\Rel} \supseteq I_{\Log}$ and will index the set $\{\B_{\alpha}\}_{\alpha \in I_{\Rel}}$ of unary predicate symbols which appear in $\Gamma$.  Each of the statements $\Theta_{\alpha}$ which appears in $\Gamma$ will satisfy one of (a) - (d) of $\dagger$, and the interactions with the symbols $\{\B_{\alpha}\}_{\alpha \in I_{\Rel}}$ will be as determined in $\dagger$.  Thus, as in Section \ref{modelth} we let $h: I_{\Rel} \rightarrow I_{\Log}$ map $\alpha$ to the minimal $\alpha'$ such that $\B_{\alpha}$ appears in $\Theta_{\alpha'}$.  As noted, this will satisfy $\alpha \leq h(\alpha) \leq \alpha +1$.  Again, for each $0 \leq \beta < \aleph_1$ we let $I_{\Log, \beta} = I_{\Log} \cap \beta$ and $I_{\Rel, \beta} = \{\alpha \in I_{\Rel} \mid h(\alpha)<\beta\}$, noting that this will sometimes be a proper subset of $I_{\Rel} \cap \beta$.

For each $0 \leq \beta < \aleph_1$ the collection $\Bo^{\beta}$ will be indexed by $I_{\Rel, \beta}$: $\Bo^{\beta} = \{B_{\alpha}^{\beta}\}_{\alpha \in I_{\Rel, \beta}}$.  Importantly, the superscript $\beta$ in this case is only an index; it is not a set of functions from $\beta$ to a set $\Bo$ or $B_{\alpha}$.  Moreover $B_{\alpha}^{\beta} \subseteq \beta$ will hold for each $\alpha\in I_{\Rel, \beta}$.  Also we will have $B_{\alpha}^{\beta_0}  = B_{\alpha}^{\beta} \cap \beta_0$ whenever $\alpha \in I_{\Rel, \beta_0}$ and $\beta_0 \leq \beta < \aleph_1$.

We will let $I_{\Conn, \beta} = I_{\Conn} \cap \beta$.  The elements of $\Sequ$ will be indexed by $I_{\Conn} \times \{0, 1\}$: $\Sequ = \{(\{s_{\alpha, 0, q}\}_{q \in \omega}, \{s_{\alpha, 1, q}\}_{q \in \omega})\}_{\alpha \in I_{\Conn}}$.  For $0 \leq \beta<\aleph_1$ and $\alpha \in I_{\Conn, \beta}$, both of $\{s_{\alpha, 0, q}\}_{q\in \omega}$ and $\{s_{\alpha, 1, q}\}_{q \in \omega}$ will be strictly increasing such that $\alpha = \sup_{q \in \omega} s_{\alpha, 0, q} = \sup_{q\in \omega} s_{\alpha, 1, q}$.

Now we enumerate the inductive hypotheses explicitly.  Let $T \subseteq \aleph_1$ be the second derived subset (i.e the set of limit points of limit points in $\aleph_1$ under the order topology).  As the set $T$ is club in $\aleph_1$ we let $\{(S_{\alpha, 0}, S_{\alpha, 1})\}_{\alpha \in T}$ be a sequence as in Lemma \ref{twosetsnotone}.  Let $T^* = T \cup \{0\}$.  Each ordinal $\gamma < \aleph_1$ may be uniquely written as $\gamma = t + \omega l + n$ where $t \in T^*$ and $l, n\in \omega$.  Let $R$ denote the set of ordinals $\gamma <\aleph_1$ which are of form $\gamma = t + \omega 6k +n$ with $\gamma \notin \omega$ and $n \geq 2$.  As we are assuming $\diamondsuit$ we have $\aleph_1 = 2^{\aleph_0}$.  Thus, let $f: R \rightarrow \bigcup_{\delta < \aleph_1} \{0, 1\}^{\delta}$ be a function such that each element of the codomain has uncountable preimage.

The following will hold for all $0 \leq \beta < \aleph_1$.

\begin{enumerate}[(i)]

\item The set $I_{\Log, \beta}$ is such that $I_{\Log, \beta_0} = I_{\Log, \beta} \cap \beta_0$ for each $\beta_0 \leq \beta$.  Also, $\Gamma_{\beta} = \{\Theta_{\alpha}\}_{\alpha \in I_{\Log, \beta}}$ and satisfies $\dagger$.  Also, $\Gamma_{\beta_0} = \{\Theta_{\alpha}\}_{\alpha <\beta_0} \subseteq \Gamma_{\beta}$ for $\beta_0 \leq \beta$.

\item The set $I_{\Rel, \beta}$ is such that $I_{\Rel, \beta_0} = I_{\Rel, \beta} \cap \beta_0$ for each $\beta_0 \in I_{\Log, \beta}$.  Also, the collection $\Bo^{\beta} = \{B_{\alpha}^{\beta}\}_{\alpha \in I_{\Rel, \beta}}$ is such that $B_{\alpha}^{\beta} \subseteq \beta$, and for $\beta_0 \leq \beta$ and $\alpha \in I_{\Rel, \beta_0}$ we have $B_{\alpha}^{\beta_0} = B_{\alpha}^{\beta} \cap \beta_0$.

\item $\Bo^{\beta}\models \Gamma_{\beta}$.

\item If $\omega \leq \beta$ then $(\omega, \{\omega \cap B_{\alpha}^{\beta}\}_{\alpha \in I_{\Rel, \beta}})$ is a Boolean saturated model of $\Gamma_{\beta}$.

\item The set $I_{\Conn, \beta}$ is such that $I_{\Conn, \beta_0} = I_{\Conn, \beta} \cap \beta_0$ for each $\beta_0 \leq \beta$, and $I_{\Conn, \beta} \subseteq T$.  The set $\Sequ_{\beta}$ consists of ordered pairs of strictly increasing sequences $$\Sequ_{\beta} = \{(\{s_{\alpha, 0, q}\}_{q \in \omega}, \{s_{\alpha, 1, q}\}_{q \in \omega})\}_{\alpha \in I_{\Conn, \beta}}$$ such that $\sup_{q \in \omega} s_{\alpha, 0, q} = \alpha = \sup_{q \in \omega} s_{\alpha, 1, q}$ and $\{s_{\alpha, 0, q}\}_{q\in \omega}, \{s_{\alpha, 1, q}\}_{q \in \omega} \subseteq \alpha \setminus (T \cup \omega)$.  Also, $\Sequ_{\beta_0} = \{(\{s_{\alpha, 0, q}\}_{q\in \omega}, \{s_{\alpha, 1, q}\}_{q \in \omega})\}_{\alpha \in I_{\Conn, \beta_0}} \subseteq \Sequ_{\beta}$ for each $\beta_0 \leq \beta$.

\item For each $\alpha_0 \in I_{\Conn, \beta}$ and $\alpha_1 \in I_{\Rel, \beta}$ we have that $\alpha_0 \in B_{\alpha_1}^{\beta}$ implies that there exists $N \in \omega$ for which $\{s_{\alpha_0, 0, q}\}_{q \geq N}, \{s_{\alpha_0, 1, q}\}_{q \geq N} \subseteq B_{\alpha_1}^{\beta}$.

\end{enumerate}

We'll consider cases in the construction: N (for ``n''atural number), L (for ``l''imit ordinal), A, B, C, D (corresponding to introduction of logical sentences and basis elements according to (a), (b), (c), (d) in $\dagger$), and also Cases E and F.

\noindent \textbf{Case N: $\gamma \in \omega$.}  For $\gamma \in \omega$ we let $\Gamma_{\gamma} = \Bo^{\gamma} = \Sequ_{\gamma} = I_{\Log, \gamma} = I_{\Rel, \gamma} = I_{\Conn, \gamma} = \emptyset$.  It is easy to see that (i) - (vi) hold at each such $\gamma$.

For the remaining cases we suppose that (i) - (vi) holds for all $\beta <\gamma$, where $\omega \leq \gamma <\aleph_1$.

\noindent \textbf{Case L: $\gamma$ is a limit ordinal.}  Suppose $\gamma$ is a limit ordinal.  We let $I_{\Log, \gamma} = \bigcup_{\beta < \gamma} I_{\Log, \beta}$ and $\Gamma_{\gamma} = \bigcup_{\beta < \gamma} \Gamma_{\beta}$.  Let $I_{\Rel, \gamma} = \bigcup_{\beta <\gamma}  I_{\Rel, \beta}$.  For each $\alpha \in I_{\Rel, \gamma}$ we let $B_{\alpha}^{\gamma} = \bigcup_{\beta < \gamma, \alpha \in I_{\Rel, \beta}} B_{\alpha}^{\beta}$, and write $\Bo^{\gamma} = \{B_{\alpha}^{\gamma}\}_{\alpha \in I_{\Rel, \gamma}}$.  Let $I_{\Conn, \gamma} = \bigcup_{\beta < \gamma} I_{\Conn, \beta}$ and $\Sequ_{\gamma} = \bigcup_{\beta < \gamma} \Sequ_{\beta}$.  We'll check that conditions (i) - (vi) are satisfied.

Certainly (i) and (ii) are clear.  To see (iii), we let $\alpha \in I_{\Rel, \gamma}$ be given.  We have $\alpha \in I_{\Rel, \beta}$ for some $\beta < \gamma$ and since $\Bo^{\beta} \models \B_{\alpha} \neq \emptyset$, we have $\emptyset \neq B_{\alpha}^{\beta} = B_{\alpha}^{\gamma}\cap \beta$, so that in particular $\Bo^{\gamma}\models \B_{\alpha} \neq \emptyset$.  Now if $\Bo^{\gamma} \models \Gamma_{\gamma}$ fails, say $\Bo^{\gamma} \models \Theta_{\alpha}$ fails, we know by Remark \ref{existentialproblems} that there is an existential witness for this.  For example if $\Theta_{\alpha}$ is of type (a) then we have some $x\in \gamma$ such that $x\in B_{\alpha_0}^{\gamma} \cap B_{\alpha_1}^{\gamma}$, but now we can select $\beta < \gamma$ with $x\in B_{\alpha_0}^{\beta} \cap B_{\alpha_1}^{\beta}$ and $\alpha \in I_{\Log, \beta}$ which contradicts $\Bo^{\beta} \models \Theta_{\alpha}$.  If $\Theta_{\alpha}$ is of type (b), (c), or (d) then the check is similar.  Thus (iii) holds.

For (iv) we consider one of two possibilities.  If $\gamma = \omega$ then $(\omega, \omega \cap\{B_{\alpha}^{\gamma}\}_{\alpha \in I_{\Rel, \gamma}}) = (\omega, \emptyset)$ and this is certainly a Boolean saturated model of $\Gamma_{\omega} = \emptyset$.  Suppose now that $\gamma > \omega$.  By (iii) we know that $\Gamma_{\gamma}$ is consistent.  The check that $(\omega, \omega \cap\{B_{\alpha}^{\gamma}\}_{\alpha \in I_{\Rel, \gamma}})$ is a model of $\Gamma_{\gamma}$ follows along precisely the same lines as the check in (iii).  We check that this model is Boolean saturated.  Let $Y, Z \subseteq I_{\Rel, \gamma}$ be finite such that $\X_{\gamma, Y, Z} \neq \emptyset$.  Select $\omega < \beta < \gamma$ large enough that $Y, Z \subseteq I_{\Rel, \beta}$.  Since $(\omega, \{\omega \cap B_{\alpha}^{\beta}\}_{\alpha \in I_{\Rel, \beta}})$ is a Boolean saturated model of $\Gamma_{\beta}$, we use the coordinate map $\Coor_{\beta}: \omega \rightarrow \X_{\beta}$ and have by assumption that $\Coor_{\beta}^{-1}(\X_{\beta, Y, Z})$ is infinite.  But letting $\Coor_{\gamma}: \omega \rightarrow \X_{\gamma}$ be the $\gamma$-coordinate map, it is clear that $\Coor_{\gamma}^{-1}(\X_{\gamma, Y, Z}) \supseteq \Coor_{\beta}^{-1}(\X_{\beta, Y, Z})$.  We have shown (iv).

The claim (v) is clear by induction.

For (vi) we let $\alpha_0 \in I_{\Conn, \gamma}$ and $\alpha_1 \in I_{\Rel, \gamma}$ and suppose that $\alpha_0 \in B_{\alpha_1}^{\gamma}$.  As $\gamma$ is a limit ordinal we select $\alpha_0, \alpha_1 < \beta < \gamma$ such that $\alpha_0 \in I_{\Conn, \beta}$ and $\alpha_1 \in I_{\Rel, \beta}$.  We have $\alpha_0 \in B_{\alpha_1}^{\gamma} \cap \beta = B_{\alpha_1}^{\beta}$, so by inductive hypothesis there exists $N\in \omega$ such that $\{s_{\alpha_0, 0, q}\}_{q \geq N} \cup \{s_{\alpha_0, 1, q}\}_{q \geq N} \subseteq B_{\alpha_1}^{\beta} \subseteq B_{\alpha_1}^{\gamma}$.  This completes Case L.

\noindent \textbf{Case A: $\gamma = t + \omega(6k + 2) + n, n \neq 0$.}  Let $\beta_0 = t + \omega(6k + 2)$.  By assumption the conditions (i) - (vi) hold at and below $\beta_0$.  We will take care of these $\gamma = \beta_0 + n$ by induction on $n \geq 1$, dealing with $n = 2p + 1$ and $n = 2p + 2$ at the same time.  Thus we will be assuming that $n\in 2\omega + 1$ and the successor of such an $n$ will be considered at the same time.

For all $\gamma \in [\beta_0, \beta_0 + \omega)$ we will let

\begin{center}

$I_{\Conn, \gamma} = I_{\Conn, \beta_0}$

\end{center}

\noindent and

\begin{center}

$\Sequ_{\gamma} = \Sequ_{\beta_0}$.

\end{center}

Let $\{(x_m, y_m)\}_{m \in 2\omega}$ be an enumeration of all ordered pairs of distinct points $x, y \in \beta_0$.  We have $n \in 2\omega +1$ and we let $m = n - 1$.

Let $\Theta_{\beta_0 + n}  \equiv [\B_{\beta_0 + n - 1} \cap \B_{\beta_0 + n} = \emptyset] \wedge [\B_{\beta_0 + n - 1} \neq \emptyset]\wedge[\B_{\beta_0 + n} \neq \emptyset]$.  We note by Lemma \ref{nextstagea} that $\Gamma_{\beta_0 + m} \cup \{\Theta_{\beta_0 + n}\}$ is consistent.  We are assuming by induction that (i) - (vi) hold at and below $\beta_0 + m$.

We select distinct points $x, y \in \omega$, where $x = x_m$ if $x_m \in \omega$ and similarly for $y$ and $y_m$.  By Lemma \ref{refinecasea} together with induction condition (iv) there exist $J_{\beta_0 + m}, J_{\beta_0 + m + 1} \subseteq \omega$ with $x \in J_{\beta_0 + m}$ and $y \in J_{\beta_0 + m + 1}$ such that $$(\omega, \{\omega \cap B_{\alpha}^{\beta_0 + m}\}_{\alpha \in I_{\Rel, \beta_0 + m}} \cup \{J_{\beta_0 + m}, J_{\beta_0 + m + 1}\})$$ is a Boolean saturated model of $\Gamma_{\beta_0 + m} \cup \{\Theta_{\beta_0 + n}\}$.

\noindent \textbf{Subcase A.1: $x_m, y_m \in \beta_0 \setminus I_{\Conn, \beta}$.}  Notice that $$(\beta_0 + m, \{B_{\alpha}^{\beta_0 + m}\}_{\alpha \in I_{\Rel, \beta_0 + m}} \cup \{J_{\beta_0 + m} \cup \{x_m\}, J_{\beta_0 + m + 1} \cup\{y_m\}\})$$  is a model of $\Gamma_{\beta_0 + m} \cup \{\Theta_{\beta_0 + n}\}$.  We let $B_{\beta_0 + m}^{\beta_0 + m} = J_{\beta_0 + m} \cup \{x_m\}$ and for $\alpha \in I_{\Rel, \beta_0 + m} \cup \{\beta_0 + m, \beta_0 + m + 1\}$ we let

\begin{center}

$B_{\alpha}^{\beta_0 + m + 1} = \begin{cases}B_{\alpha}^{\beta_0 + m}$ if $\alpha < \beta_0 + m$ and $x_m \notin B_{\alpha}^{\beta_0 + m}\\ B_{\alpha}^{\beta_0 + m} \cup \{\beta_0 + m\}$ if $\alpha < \beta_0 + m$ and $x_m\in B_{\alpha}^{\beta_0 + m}\\ J_{\beta_0 + m + 1} \cup \{y_m\}$ if $\alpha = \beta_0 +  m + 1\end{cases}$

\end{center}

\noindent and 

\begin{center}  $B_{\alpha}^{\beta_0 + m + 2} = \begin{cases}B_{\alpha}^{\beta_0 + m + 1}$ if $y_m \notin B_{\alpha}^{\beta_0 + m + 1}\\B_{\alpha}^{\beta_0 + m + 1} \cup \{\beta_0 + m + 1\}$ if $y_m \in B_{\alpha}^{\beta_0 + m + 1}   \end{cases}$
\end{center}

We let $I_{\Log, \beta_0 + m + 2} = I_{\Log, \beta_0 + m} \cup \{\beta_0 + m + 1\}$ and $\Gamma_{\beta_0 + m + 2} = \Gamma_{\beta_0 + m} \cup \{\Theta_{\beta_0 +  m + 1}\}$, and $I_{\Rel, \beta_0  + m + 2} = I_{\Rel, \beta_0 + m} \cup \{\beta_0 + m, \beta_0 + m + 1\}$.  By Lemma \ref{addingpoints} we have $(\beta + m + 2, \{B_{\alpha}^{\beta_0 + m + 2}\}_{\alpha \in I_{\Rel, \beta_0 + m + 2}}) \models \Gamma_{\beta_0 + m + 2}$.  The check that conditions (i), (ii), (iv), (v) and (vi) also hold at $\gamma = \beta_0 + m + 1 = \beta_0 + n$ and $\gamma = \beta_0 + m + 2 = \beta_0 + n + 1$ hold is quite obvious.

\noindent \textbf{Subcase A.2: $x_m, y_m \in I_{\Conn, \beta_0}$.}  We have $I_{\Conn, \beta_0} \subseteq T$ and $\{s_{x_m, 0, q}\}_{q\in \omega} \cup \{s_{x_m, 1, q}\}_{q\in \omega} \subseteq \beta_0 \setminus (T \cup \omega)$, and similarly for $y_m$.  If without loss of generality $x_m > y_m$ (considered as elements of $\beta_0$) we select $N \in \omega$ large enough that $q \geq N$ implies $x_{x_m, i, q} > y_m$ for $i \in \{0, 1\}$.   Notice that $(\beta_0 + m, \{B_{\alpha}^{\beta_0 + m}\}_{\alpha \in I_{\Rel, \beta_0 + m}} \cup \{J_{\beta_0 + m} \cup \{x_m\} \cup \{s_{x_m, i, q}\}_{i\in \{0, 1\}, q \geq N}, J_{\beta_0 + m + 1} \cup \{y_m\} \cup \{s_{y_m, i, q}\}_{i\in \{0, 1\}, q\in \omega}\})$ is a model of  $\Gamma_{\beta_0 + m} \cup \{\Theta_{\beta_0 + n}\}$.  Now we let $B_{\beta_0 + m}^{\beta_0 + m} = J_{\beta_0 + m} \cup \{x_m\} \cup \{s_{x_m, i, q}\}_{i\in \{0, 1\}, q \geq N}$ and for $\alpha \in I_{\Rel, \beta_0 + n} \cup \{\beta_0 + m, \beta_0 + m +1\}$ we let

\begin{center}

$B_{\alpha}^{\beta_0 + m + 1} = \begin{cases}B_{\alpha}^{\beta_0 + m}$ if $\alpha < \beta + m + 1$ and $x_m \notin B_{\alpha}^{\beta_0 + m}\\B_{\alpha}^{\beta_0 + m} \cup \{\beta_0 + m\}$ if $\alpha < \beta_0 + m + 1$ and  $x_m\in B_{\alpha}^{\beta_0 + m}\\J_{\beta_0 + m + 1} \cup \{y_m\} \cup \{s_{y_m, i, q}\}_{i\in \{0, 1\}, q\in \omega}\})$ if $\alpha = \beta_0 + m + 1   \end{cases}$

\end{center}

\noindent and

\begin{center}  $B_{\alpha}^{\beta_0 + m + 2} = \begin{cases}B_{\alpha}^{\beta_0 + m + 1}$ if $y_m \notin B_{\alpha}^{\beta_0 + m + 1}\\B_{\alpha}^{\beta_0 + m + 1} \cup \{\beta_0 + m + 1\}$ if $y_m \in B_{\alpha}^{\beta_0 + m + 1}   \end{cases}$
\end{center}

We define $I_{\Log, \beta_0 + m + 2}$, $\Gamma_{\beta_0 + m + 2}$, and $I_{\Rel, \beta_0 + m + 2}$ as in Subcase A.1.  That (i) - (vi) hold at $\gamma = \beta_0 + m + 1$ and $\gamma = \beta_0 + m + 2$ is clear.

\noindent \textbf{Subcase A.3: Exactly one of $x_m, y_m$ is in $I_{\Conn, \beta_0}$.}  Without loss of generality $x_m \in I_{\Conn, \beta}$.  Select $N \in \omega$ large enough that $y_m \notin \{x_m\} \cup \{s_{x_m, i, q}\}_{i\in \{0, 1\}, q \geq N}$ (if $x_m < y_m$ then of course we can let $N = 0$ and if $x_m > y_m$ then we are using the fact that $x_m = \lim_{q \rightarrow \infty} s_{x_m, i, q}$).

Notice that $$(\beta_0 + m, \{B_{\alpha}^{\beta_0 + m}\}_{\alpha \in I_{\Rel, \beta_0 + m}} \cup \{J_{\beta_0 + m} \cup \{x_m\} \cup \{s_{x_m, i, q}\}_{i\in \{0, 1\}, q \geq N}, J_{\beta_0 + m + 1} \cup\{y_m\}\})$$ is a model of  $\Gamma_{\beta_0 + m} \cup \{\Theta_{\beta_0 + n}\}$.  Let $B_{\beta_0 + m}^{\beta_0 + m} = J_{\beta_0 + m} \cup \{x_m\} \cup \{s_{x_m, i, q}\}_{i\in \{0, 1\}, q \geq N}$ and for $\alpha \in I_{\Rel, \beta_0 + m} \cup \{\beta_0 + m, \beta_0 + m +1\}$ we let

\begin{center}

$B_{\alpha}^{\beta_0 + m + 1} = \begin{cases}B_{\alpha}^{\beta_0 + m}$ if $\alpha < \beta + m + 1$ and $x_m \notin B_{\alpha}^{\beta_0 + m}\\B_{\alpha}^{\beta_0 + m} \cup \{\beta_0 + m\}$ if $\alpha < \beta_0 + n + 1$ and $x_m\in B_{\alpha}^{\beta_0 + m}\\J_{\beta_0 + m + 1} \cup \{y_m\}$ if $\alpha = \beta_0 + m + 1   \end{cases}$

\end{center}

\noindent and

\begin{center}  $B_{\alpha}^{\beta_0 + m + 2} = \begin{cases}B_{\alpha}^{\beta_0 + m + 1}$ if $y_m \notin B_{\alpha}^{\beta_0 + m + 1}\\B_{\alpha}^{\beta_0 + m + 1} \cup \{\beta_0 + m + 1\}$ if $y_m \in B_{\alpha}^{\beta_0 + m + 1}   \end{cases}$
\end{center}

Update the parameters as in Subcase A.1 and (i) - (vi) follow.

\noindent \textbf{Case B: $ \gamma = t + \omega(6k + 3) + n, n \neq 0$.}  Let $\beta_0 = t+ \omega(6k + 3)$.  We know that conditions (i) - (vi) hold at and below $\beta_0$.  We will induct on $n \geq 1$.  For all $\gamma$ in this case we will have 

\begin{center}
$I_{\Conn, \gamma} = I_{\Conn, \beta_0}$
\end{center}

\noindent and

\begin{center}
$\Sequ_{\gamma} = \Sequ_{\beta_0}$
\end{center}

Let $\{(B_{\alpha_{m, 0}}^{\beta_0}, B_{\alpha_{m, 1}}^{\beta_0}, x_m)\}_{m \in \omega}$ be an enumeration of all triples such that $x_m, \alpha_{m, 0}, \alpha_{m, 1} < \beta_0$ and $x_m \in B_{\alpha_{m, 0}}^{\beta_0} \cap B_{\alpha_{m, 1}}^{\beta_0}$.  Let $m = n - 1$ and $\Theta_{\beta_0 + m} \equiv [\B_{\alpha_{m, 0}} \cap \B_{\alpha_{m, 1}} \supseteq \B_{\beta_0 + m}]\wedge [\B_{\beta_0 + m} \neq \emptyset]$.  We note by Lemma \ref{nextstageb} that $\Gamma_{\beta_0 + m} \cup \{\Theta_{\beta_0 + m}\}$ is consistent, since $\emptyset \neq \Coor_{\beta_0 + m}(B_{\alpha_{m, 0}}^{\beta_0 + m}) \cap \Coor_{\beta_0 + m}(B_{\alpha_{m, 1}}^{\beta_0 + m}) \subseteq b_{\alpha_{m, 0}}^{\beta_0 + m} \cap b_{\alpha_{m, 1}}^{\beta_0 + m}$.

We have by induction that $(\omega, \{\omega \cap B_{\alpha}^{\beta_0 + m}\}_{\alpha \in I_{\Rel, \beta_0 + m}})$ is a Boolean saturated model of $\Gamma_{\beta_0 + m}$ and we select $x \in \omega \cap B_{\alpha_{m, 0}}^{\beta_0} \cap B_{\alpha_{n, 1}}^{\beta_0} = \omega \cap B_{\alpha_{m, 0}}^{\beta_0 + m} \cap B_{\alpha_{m, 1}}^{\beta_0 + m}$, with $x = x_m$ in case $x_m \in \omega$.  By Lemma \ref{refinecaseb} we let $J_{\beta_0 + m} \subseteq \omega$, with $x\in J_{\beta_0 + m}$, be such that $$(\omega, \{\omega \cap B_{\alpha}^{\beta_0 + m}\}_{\alpha \in I_{\Rel, \beta_0 + m}} \cup\{J_{\beta_0 + m}\})$$ is a Boolean saturated model of $\Gamma_{\beta_0 + m} \cup \{\Theta_{\beta_0 + m}\}$.

\noindent \textbf{Subcase B.1: $x_m \in \beta_0 \setminus I_{\Conn, \beta_0}$.}  It is clear that $(\beta_0 + m, \{B_{\alpha}^{\beta_0 + m}\}_{\alpha \in I_{\Rel, \beta_0 + m}} \cup \{J_{\beta_0 + m} \cup \{x_m\}\})$ is a model of $\Gamma_{\beta_0 + m} \cup \{\Theta_{\beta_0 + m}\}$.  We let $B_{\beta_0 + m}^{\beta_0 + m} = J_{\beta_0 + m} \cup \{x_m\}$ and for $\alpha \in I_{\Rel, \beta_0 + m} \cup \{\beta_0 + m\}$ we let

\begin{center}  $B_{\alpha}^{\beta_0 + m + 1} = \begin{cases} B_{\alpha}^{\beta_0 + m}$ if $x_m \notin B_{\alpha}^{\beta_0 + m}\\  B_{\alpha}^{\beta_0 + m} \cup \{\beta_0 + m\}$ if $x_m \in B_{\alpha}^{\beta_0 + m}  \end{cases}$
\end{center}

Let $I_{\Log, \beta_0 + m +1} = I_{\Log, \beta_0 + m} \cup \{\beta_0 + m\}$, $\Gamma_{\beta_0 + m + 1} = \Gamma_{\beta_0 + m} \cup \{\Theta_{\beta_0 + m}\}$, and $I_{\Rel, \beta_0 + m + 1} = I_{\Rel, \beta_0 + m} \cup \{\beta_0 + m\}$.  By Lemma \ref{addingpoints} we have that $\Bo^{\beta_0 + m + 1} \models \Gamma_{\beta_0 + m + 1}$, and the checks for the remaining (i), (ii), (iv), (v), and (vi) are straightforward.

\noindent \textbf{Subcase B.2: $x_m \in I_{\Conn, \beta_0}$.}  As property (vi) holds at $\beta_0 + m$ we select $N \in \omega$ large enough that both $\{s_{x_m, i, q}\}_{i\in \{0, 1\}, q \geq N} \subseteq B_{\alpha_{m, 0}}$ and $\{s_{x_m, i, q}\}_{i\in \{0, 1\}, q \geq N} \subseteq B_{\alpha_{m, 1}}$.

It is clear that $(\beta_0 + m, \{B_{\alpha}^{\beta_0 + m}\}_{\alpha \in I_{\Rel, \beta_0 + m}} \cup \{J_{\beta_0 + m} \cup \{x_m\} \cup \{s_{x_m, i, q}\}_{i\in \{0, 1\}, q \geq N}\})$ is a model of $\Gamma_{\beta_0 + m} \cup \{\Theta_{\beta_0 + m}\}$.  We let $B_{\beta_0 + m}^{\beta_0 + m} = J_{\beta_0 + m} \cup \{x_m\} \cup \{s_{x_m, i, q}\}_{i\in \{0, 1\}, q \geq N}$ and for $\alpha \in I_{\Rel, \beta_0 + m} \cup \{\beta_0 + m\}$ we let

\begin{center}  $B_{\alpha}^{\beta_0 + m + 1} = \begin{cases} B_{\alpha}^{\beta_0 + m}$ if $x_m \notin B_{\alpha}^{\beta_0 + m}\\  B_{\alpha}^{\beta_0 + m} \cup \{\beta_0 + m\}$ if $x_m \in B_{\alpha}^{\beta_0 + m}  \end{cases}$
\end{center}

One updates the parameters as in the end of Subcase B.1.  That (i) - (vi) hold at $\gamma = \beta_0 + m + 1 = \beta_0 + n$ is easy to see.

\noindent \textbf{Case C: $ \gamma = t + \omega(6k + 4) + n, n \neq 0$.}  Let $\beta_0 = t + \omega(6k + 4)$.  As in Case A we will deal with these $\gamma$ by induction on $n \geq 1$, and consider $n = 2p + 1$ and $n = 2p + 2$ contemporaneously.  Thus, as in Case A, we will assume that $n\in 2\omega + 1$.  For all of the $\gamma$ being considered in this case we let

\begin{center}
$I_{\Conn, \gamma} = I_{\Conn, \beta_0}$
\end{center}

\noindent and

\begin{center}
$\Sequ_{\gamma} = \Sequ_{\beta_0}$
\end{center}

Let $\{(B_{\alpha_m}^{\beta_0}, x_m, y_m)\}_{m \in 2\omega}$ be an enumeration of all triples such that $\alpha_m \in I_{\Rel, \beta_0}$, $x_m, y_m \in \beta_0$, $x_m \neq y_m$, and $x_m, y_m \notin B_{\alpha_m}^{\beta_0}$.  We have $n\in 2\omega + 1$ and let $m = n - 1$.

Let $\Theta_{\beta_0 + n}\equiv [\B_{\beta_0 + m} \cap \B_{\beta_0 + m + 1} \subseteq \B_{\alpha_m}] \wedge (\forall x)[x \in \B_{\beta_0 + m}\cup \B_{\beta_0 + m + 1}\cup \B_{\alpha_m}] \wedge [\B_{\beta_0 + m} \neq \emptyset] \wedge [\B_{\beta_0 + m + 1} \neq \emptyset]$.  We note by Lemma \ref{nextstagec} that $\Gamma_{\beta_0 + m} \cup \{\Theta_{\beta_0 + n}\}$ is consistent.

We know by induction that $(\omega, \{\omega \cap B_{\alpha}^{\beta_0 + m}\}_{\alpha \in I_{\Rel, \beta_0 + m}})$ is a Boolean saturated model of $\Gamma_{\beta_0 + m}$.  Select distinct points $x, y\in \omega \setminus B_{\alpha_m}^{\beta_0 + m}$ such that $x = x_k$ or $y = y_k$ if $x_k$ or $y_k$ is in $\omega$.  That such points exist is implied by the fact that $(\omega, \{\omega \cap B_{\alpha}^{\beta_0 + m}\}_{\alpha \in I_{\Rel, \beta_0 + m}})$ is Boolean saturated, for $\Coor_{\beta_0 + m}(\{x_m, y_m\}) \subseteq \X_{\beta_0 + m, \{\alpha_m\}, \emptyset}$.  By Lemma \ref{refinecasec} there exist $J_{\beta_0 + m}, J_{\beta_0 + m + 1} \subseteq \omega$ such that $x\in J_{\beta_0 + m}$, $y \in J_{\beta_0 + m + 1}$ and

\begin{center}

$(\omega, \{\omega \cap B_{\alpha}^{\beta_0 + m}\}_{\alpha \in I_{\Rel, \beta_0 + m}} \cup \{J_{\beta_0 + m}, J_{\beta_0 + m + 1}\})$

\end{center}

\noindent is a Boolean saturated model of $\Gamma_{\beta_0 + m} \cup \{\Theta_{\beta_0 + n}\}$.

\noindent \textbf{Subcase C.1: $x_m, y_m \in \beta_0 \setminus I_{\Conn, \beta_0}$.} Notice that 

\begin{center}

$(\beta_0 + m, \{B_{\alpha}^{\beta_0 + m}\}_{\alpha \in I_{\Rel, \beta_0 + m}} \cup \{J_{\beta_0 + m} \cup \{x_m\}, J_{\beta_0 + m + 1} \cup ((\beta_0 + m)\setminus (\omega\cup \{x_m\}))\})$

\end{center}

\noindent is a model of $\Gamma_{\beta_0 + m} \cup \{\Theta_{\beta_0 + n}\}$.  We let $B_{\beta_0 + m}^{\beta_0 + m} = J_{\beta_0 + n} \cup \{x_m\}$ and for $\alpha \in I_{\Rel, \beta_0 + m}\cup\{\beta_0 + m, \beta_0 + m + 1\}$ we let

\begin{center}

$B_{\alpha}^{\beta_0 + m + 1} = \begin{cases} B_{\alpha}^{\beta_0 + m}$ if $\alpha < \beta_0 + m + 1$ and $x_m \notin B_{\alpha}^{\beta_0 + m}\\B_{\alpha}^{\beta_0 + m} \cup \{\beta_0 + m\}$ if $\alpha < \beta_0 + m + 1$ and $x_m \in B_{\alpha}^{\beta_0 + m}\\J_{\beta_0 + m + 1} \cup ((\beta_0 + m)\setminus(\omega\cup \{x_m\}))$ if $\alpha = \beta_0 + m + 1  \end{cases}$

\end{center}

\noindent and

\begin{center}

$B_{\alpha}^{\beta_0 + m + 2} = \begin{cases}B_{\alpha}^{\beta_0 + m + 1}$ if $y_m \notin B_{\alpha}^{\beta_0 + m + 1}\\B_{\alpha}^{\beta_0 + m + 1} \cup \{\beta_0 + m + 1\}$ if $y_m \in B_{\alpha}^{\beta_0 + m + 1}   \end{cases}$

\end{center}

Let $I_{\Log, \beta_0 + m + 2} = I_{\Log, \beta_0 + m} \cup \{\beta_0 +  n\}$ and $\Gamma_{\beta_0 + m + 2} = \Gamma_{\beta_0 + m} \cup \{\Theta_{\beta_0 + n}\}$, and $I_{\Rel, \beta_0 + m + 2} = I_{\Rel, \beta_0 + m} \cup \{\beta_0 + m, \beta_0 + m + 1\}$.  By Lemma \ref{addingpoints} we have that $\Bo^{\beta_0 + m + 2} \models \Gamma_{\beta_0 + m + 2}$, and checking that the other induction hypotheses at $\beta_0 + m + 1$ and $\beta_0 + m + 2$ are met is straightforward.

\noindent \textbf{Subcase C.2: At least one of $x_m, y_m$ is in $I_{\Conn, \beta_0}$.}  Let, without loss of generality, $x_m \in I_{\Conn, \beta_0}$.  We have $I_{\Conn, \beta_0} \subseteq T$ and $\{s_{x_m, i, q}\}_{i\in \{0, 1\}, q\in \omega} \subseteq \beta_0 \setminus (T \cup \omega)$.  As $x_m, y_m$ are distinct we select $N \in \omega$ large enough that $y_m \notin [x_{x_m, N, 0}, x_m) \cup [x_{x_m, N, 0}, x_m)$.

Notice that 

\begin{center}

$(\beta_0 + m, \{B_{\alpha}^{\beta_0 + m}\}_{\alpha \in I_{\Rel, \beta_0 + m}} \cup \{J_{\beta_0 + m} \cup \{x_m\} \cup \{s_{x_m, i, q}\}_{i \in \{0, 1\}, q \geq N}, J_{\beta_0 + m + 1} \cup ((\beta_0 + m) \setminus (\omega \cup  \{x_m\} \cup \{s_{x_m, i, q}\}_{i \in \{0, 1\}, q \geq N}))\})$

\end{center}

\noindent is a model of $\Gamma_{\beta_0 + m}\cup\{\Theta_{\beta_0 + n}\}$.  We let $B_{\beta_0 + m}^{\beta_0 + m} = J_{\beta_0 + n} \cup\{x_m\} \cup \{s_{x_m, i, q}\}_{i \in \{0, 1\}, q \geq N}$ and for $\alpha \in I_{\Rel, \beta_0 + m}\cup\{\beta_0 + m, \beta_0 + m + 1\}$ we let $B_{\alpha}^{\beta_0 + m + 1}$ equal

\begin{center}

$\begin{cases} B_{\alpha}^{\beta_0 + m}$ if $\alpha < \beta_0 + m + 1$ and $x_m \notin B_{\alpha}^{\beta_0 + m}\\B_{\alpha}^{\beta_0 + m} \cup \{\beta_0 + m\}$ if $\alpha < \beta_0 + m + 1$ and $x_m \in B_{\alpha}^{\beta_0 + m}\\J_{\beta_0 + m + 1} \cup ((\beta_0 + m)\setminus (\omega\cup \{x_k\} \cup \{s_{x_m, i, q}\}_{i \in \{0, 1\}, q\geq N}))$ if $\alpha = \beta_0 + m + 1  \end{cases}$

\end{center}

\noindent and

\begin{center}

$B_{\alpha}^{\beta_0 + m + 2} = \begin{cases}B_{\alpha}^{\beta_0 + m + 1}$ if $y_m \notin B_{\alpha}^{\beta_0 + m + 1}\\B_{\alpha}^{\beta_0 + m + 1} \cup \{\beta_0 + m + 1\}$ if $y_m \in B_{\alpha}^{\beta_0 + m + 1}   \end{cases}$

\end{center}

Update the parameters as in Subcase C.1 and the check that the inductive conditions hold is straightforward.

\noindent \textbf{Case D: $ \gamma = t + \omega(6k + 5) + n, n \neq 0$.}  Let $\beta_0 = t + \omega(6k + 5)$.  As in Cases A and C we will induct on $n \geq 1$ and consider $n = 2p + 1$ and $n = 2p + 2$ together.  Thus, as in those cases, we will assume that $n\in 2\omega + 1$.  For all of the $\gamma$ being considered in this case we let

\begin{center}
$I_{\Conn, \gamma} = I_{\Conn, \beta_0}$
\end{center}

\noindent and

\begin{center}
$\Sequ_{\gamma} = \Sequ_{\beta_0}$
\end{center}

Let $\{(B_{\alpha_m}^{\beta_0}, x_m)\}_{m \in 2\omega}$ be an enumeration of all pairs  where $x_m \in B_{\alpha_m}^{\beta_0}$.  We have $n\in 2\omega + 1$ and let $m = n - 1$.

Let $\Theta_{\beta_0 + n} \equiv (\forall x)[x\in \B_{\alpha_m} \cup \B_{\beta_0 + m + 1}] \wedge [\B_{\beta_0 + m} \subseteq \B_{\alpha_m}] \wedge [\B_{\beta_0 + m} \neq \emptyset] \wedge [\B_{\beta_0 + m + 1}\neq \emptyset]$.  We note by Lemma \ref{nextstaged} that $\Gamma_{\beta_0 + n} \cup \{\Theta_{\beta_0 + n + 1}\}$ is consistent.

We know $(\omega, \{\omega \cap B_{\alpha}^{\beta_0 + m}\}_{\alpha \in I_{\Rel, \beta_0 + m}})$ is a Boolean saturated model of $\Gamma_{\beta_0 + m}$.  Select a point $x\in \omega \cap B_{\alpha_m}^{\beta_0 + m}$, with $x = x_m$ if $x_m \in \omega$.  By Lemma \ref{refinecased} we can select $J_{\beta_0 + m}, J_{\beta_0 + m + 1} \subseteq \omega$ with $x\in J_{\beta_0 + m}$ and

\begin{center}

$(\omega, \{\omega \cap B_{\alpha}^{\beta_0 + m}\}_{\alpha \in I_{\Rel, \beta_0 + m}} \cup \{J_{\beta_0 + m}, J_{\beta_0 + m + 1}\})$

\end{center}

\noindent is a Boolean saturated model of $\Gamma_{\beta_0 + m} \cup \{\Theta_{\beta_0 + n}\}$.

\noindent \textbf{Subcase D.1: $x_m \in \beta_0 \setminus I_{\Conn, \beta_0}$.}  Notice that

\begin{center}

$(\beta_0 + m, \{B_{\alpha}^{\beta_0 + m}\}_{\alpha \in I_{\Rel, \beta_0 + m}} \cup  \{\{x_m\} \cup J_{\beta_0 + m}, J_{\beta_0 + m + 1}\cup ((\beta_0 + m) \setminus (\omega \cup \{x_m\}))\})$

\end{center}

\noindent is a model of $\Gamma_{\beta_0 + m} \cup \{\Theta_{\beta_0 + n}\}$.  We let $B_{\beta_0 + m}^{\beta_0 + m} = J_{\beta_0 + m} \cup \{x_m\}$ and for $\alpha \in I_{\Rel, \beta_0 + m} \cup \{\beta_0 + m, \beta_0 + m + 1\}$ we let

\begin{center}

$B_{\alpha}^{\beta_0 + m + 1} = \begin{cases} B_{\alpha}^{\beta_0 + m}$ if $\alpha < \beta_0 + m + 1$ and $x_m\notin B_{\alpha}^{\beta_0 + m}\\B_{\alpha}^{\beta_0 + m} \cup \{\beta_0 + m\}$ if $\alpha < \beta_0 + m + 1$ and $x_m \in B_{\alpha}^{\beta_0 + m}\\J_{\beta_0 + m + 1} \cup ((\beta_0 + m)\setminus (\omega\cup \{x_k\}))$ if $\alpha = \beta_0 + m + 1  \end{cases}$

\end{center}

\noindent and

\begin{center}

$B_{\alpha}^{\beta_0 + m + 2} = \begin{cases}B_{\alpha}^{\beta_0 + m + 1}$ if $x_m \notin B_{\alpha}^{\beta_0 + m + 1}\\B_{\alpha}^{\beta_0 +  + 1} \cup \{\beta_0 + m + 1\}$ if $x_m \in B_{\alpha}^{\beta_0 + m + 1}   \end{cases}$

\end{center}

Let $I_{\Log, \beta_0 + m + 2} = I_{\Log, \beta_0 + m} \cup \{\beta_0 +  n\}$ and $\Gamma_{\beta_0 + m + 2} = \Gamma_{\beta_0 + m} \cup \{\Theta_{\beta_0 + n}\}$, and $I_{\Rel, \beta_0 + m + 2} = I_{\Rel, \beta_0 + m} \cup \{\beta_0 + m, \beta_0 + m + 1\}$.  By Lemma \ref{addingpoints} we have that $\Bo^{\beta_0 + m + 2} \models \Gamma_{\beta_0 + m + 2}$, and checking that the other induction hypotheses are met is straightforward.

\noindent \textbf{Subcase D.2: $x_m \in I_{\Conn, \beta_0}$.}  By induction we know that we can select $N \in \omega$ such that $\{s_{x_m, i, q}\}_{i\in \{0, 1\}, q \geq N} \subseteq B_{\alpha_m}$.  Notice that

\begin{center}

$(\beta_0 + m, \{B_{\alpha}^{\beta_0 + m}\}_{\alpha \in I_{\Rel, \beta_0 + m}} \cup  \{\{x_m\} \cup J_{\beta_0 + m} \{s_{x_m, i, q}\}_{i\in \{0, 1\}, q \geq N}, J_{\beta_0 + m + 1}\cup ((\beta_0 + m) \setminus (\omega \cup \{x_m\} \cup \{s_{x_m, i, q}\}_{i\in \{0, 1\}, q \geq N}))\})$

\end{center}

\noindent is a model of $\Gamma_{\beta_0 + m} \cup \{\Theta_{\beta_0 + n}\}$.  We let $B_{\beta_0 + m}^{\beta_0 + m} = J_{\beta_0 + m}\cup \{x_m\} \cup \{s_{x_m, i, q}\}_{i\in \{0, 1\}, q \geq N}$ and for $\alpha \in I_{\Rel, \beta_0 + m} \cup \{\beta_0 + m, \beta_0 + m + 1\}$ we let $B_{\alpha}^{\beta_0 + m + 1}$ equal

\begin{center}

$\begin{cases} B_{\alpha}^{\beta_0 + m}$ if $\alpha < \beta_0 + m + 1$ and $x_m \notin B_{\alpha}^{\beta_0 + m}\\B_{\alpha}^{\beta_0 + m} \cup \{\beta_0 + m\}$ if $\alpha < \beta_0 + m + 1$ and $x_m \in B_{\alpha}^{\beta_0 + m}\\J_{\beta_0 + m + 1} \cup ((\beta_0 + m) \setminus (\omega \cup \{x_m\} \cup\{s_{x_m, i, q}\}_{i\in \{0, 1\}, q \geq N}))$ if $\alpha = \beta_0 + m + 1  \end{cases}$

\end{center}

\noindent and

\begin{center}

$B_{\alpha}^{\beta_0 + m + 2} = \begin{cases}B_{\alpha}^{\beta_0 + m + 1}$ if $x_m \notin B_{\alpha}^{\beta_0 + m + 1}\\B_{\alpha}^{\beta_0 + m + 1} \cup \{\beta_0 + m + 1\}$ if $x_m \in B_{\alpha}^{\beta_0 + m + 1}   \end{cases}$

\end{center}

Update the other parameters as in Subcase D.1.  Again, one can check that the inductive conditions are met for $\gamma = \beta_0 + m + 1$ and $\gamma = \beta_0 + m + 2$.

\noindent \textbf{Case E: $\gamma = t + \omega(6k + 1) + n, n \neq 0$.}  Let $\beta_0 = t + \omega(6k + 1)$.  For $\gamma = \beta_0 + n$ we will let

\begin{center}

$I_{\Log, \gamma}  = I_{\Log, \beta_0}$

$I_{\Rel, \gamma} = I_{\Rel, \beta_0}$

$I_{\Conn, \gamma} = I_{\Conn, \beta_0}$

$\Gamma_{\gamma} = \Gamma_{\beta_0}$

$\Sequ_{\gamma} = \Sequ_{\beta_0}$

\end{center}

\noindent  We know by assumption that (iii) holds at $\beta_0$, so that in particular $\Gamma_{\beta_0}$ is consistent.  Let $\{(Y_m, Z_m)\}_{m \in \omega}$ be an enumeration of all pairs $(Y, Z)$ with $Y, Z \subseteq I_{\Rel, \beta_0}$ finite and $\X_{\beta_0, Y, Z} \neq \emptyset$ and such that $(Y_m, Z_m) = (Y_{m'}, Z_{m'})$ for infinitely many $m'\in \omega$.  Select $\sigma_m \in \X_{\beta_0, Y_m, Z_m}$ for each $m \in \omega$.

We proceed by induction on $n \geq 1$ and we let $m = n - 1$.  For each $\alpha \in I_{\Rel, \beta_0}$ we let

\begin{center}

$B_{\alpha}^{\beta_0 + m + 1} = \begin{cases} B_{\alpha}^{\beta_0 + m}$ if $\sigma_m(\alpha) = 0\\  B_{\alpha}^{\beta_0 + m} \cup \{\beta_0 + m\}$ if $\sigma_m(\alpha) = 1    \end{cases}$

\end{center}

\noindent Let $\Bo^{\beta_0 + m + 1} = \{B_{\alpha}^{\beta_0 + m + 1}\}_{\alpha \in I_{\Rel, \beta_0 + m}}$.  By Lemma \ref{addingpoints}, we have $\Bo^{\beta_0 + m} \models \Gamma_{\beta_0} = \Gamma_{\beta_0 + m}$ for all $m \in \omega$ by induction on $m \in \omega$.  Thus (iii) holds at each $\beta + n$.  All other induction requirements (i), (ii), (iv), (v), (vi) are quite obvious.

We observe that the above construction has guaranteed that $$([\beta_0, \beta_0 + \omega), \{[\beta_0, \beta_0 + \omega)\cap B_{\alpha}^{\beta_0 + \omega}\}_{\alpha \in I_{\Rel, \beta_0 + \omega}})$$ is a Boolean saturated model of $\Gamma_{\beta_0 + \omega} = \Gamma_{\beta_0}$.

\noindent \textbf{Case F: $\gamma = t + \omega 6k + n, n \geq 1$.}  Let $\beta = t + \omega 6k$.  For all such $\gamma$ we let 

\begin{center}

$I_{\Log, \gamma} = I_{\Log, \beta}$

$\Gamma_{\gamma} = \Gamma_{\beta}$

$I_{\Rel, \gamma} = I_{\Rel, \beta}$

\end{center}

\noindent \textbf{Subcase F.1:  $\gamma = t + 1$ with $t\in T$ and there exist nonempty sets $L_0, L_1 \subseteq I_{\Rel, t}$ such that $t = (\bigcup_{\alpha \in L_0} B_{\alpha}^t) \sqcup (\bigcup_{\alpha \in L_1} B_{\alpha}^t)$, $\bigcup_{\alpha \in L_0} B_{\alpha}^t = S_{t, 0}$ and $\bigcup_{\alpha \in L_1} B_{\alpha}^t = S_{t, 1}$}.  Fix such $L_0, L_1 \subseteq I_{\Rel, t}$.  We first notice that $(t , \Bo^t)$ is a Boolean saturated model of $\Gamma_t$, since $(\omega, \{\omega \cap B_{\alpha}^t\}_{\alpha \in I_{\Rel, t}})$ is a Boolean saturated model.

Next we notice that $\bigcup_{\alpha \in L_0} b_{\alpha}^t$ is disjoint from $\bigcup_{\alpha \in L_1} b_{\alpha}^t$.  Were disjointness to fail, we would have $\alpha_0 \in L_0$ and $\alpha_1 \in L_1$ with $b_{\alpha_0}^t \cap b_{\alpha_1}^t \neq \emptyset$, but since $(t , \Bo^t)$ is Boolean saturated we would have $B_{\alpha_0}^t \cap B_{\alpha_1}^t \neq \emptyset$.

Also, $(\bigcup_{\alpha \in L_0}b_{\alpha}^t) \cup (\bigcup_{\alpha \in L_1}b_{\alpha}^t)$ is dense in $\X_t$.  This follows from the fact that $\Coor_t(t)$ is dense in $\X_t$ (by Lemma \ref{coord}) and $\Coor_t(t) \subseteq (\bigcup_{\alpha \in L_0}b_{\alpha}^t) \cup (\bigcup_{\alpha \in L_1}b_{\alpha}^t)$.

As $\X_t$ is connected (Lemma \ref{compactconnected}), by Lemma \ref{limitpoint} we select $\sigma \in \X_t$ which is a limit point of $\bigcup_{\alpha \in L_0}b_{\alpha}^t$ and also a limit point of $\bigcup_{\alpha \in L_1}b_{\alpha}^t$.  Let $L_2 = \{\alpha \in I_{\Rel, t} \mid \sigma \in b_{\alpha}^t\}$.  Notice that $L_2$ is nonempty, since $\X_t$ models some statements of type (c) (by Case C of our induction).  Let $L_2 = \{\alpha_q\}_{q \in \omega}$ be an enumeration (we allow repetition of indices for the same element of $L_2$).  For each $q \in \omega$ we select $\alpha_{0, q} \in L_0$ and $\alpha_{1, q} \in L_1$ such that $b_{\alpha_{0, q}}^t \cap \bigcap_{i = 0}^q b_{\alpha_i}^t \neq \emptyset$ and $b_{\alpha_{1, q}}^t \cap \bigcap_{i = 0}^q b_{\alpha_i}^t \neq \emptyset$.  Let $\{\epsilon_q\}_{q \in \omega}$ be a strictly increasing sequence such that $\sup_{q \in \omega}\epsilon_q = t$.  Pick $\omega <\beta_0 < t$ such that $\beta_0$ is of form $\beta_0 = t_0 + \omega(6k_0+1)$ and $\beta_0 > \alpha_0, \alpha_{0, 0}, \alpha_{1, 0}, \epsilon_0$.  Assuming $\beta_q$ has been selected we select $\beta_{q+1}$ which is of form $\beta_{q+1} = t_{q+1} + \omega(6k_{q + 1} + 1)$ and $\beta_{q + 1} > \beta_q, \alpha_0, \ldots, \alpha_q, \alpha_{0, q}, \alpha_{1, q}, \epsilon_q$.

We have by construction that $\{\beta_q\}_{q \in \omega}$ is a strictly increasing sequence with $\sup_{q \in \omega}\beta_q = t$.  Since $([\beta_q, \beta_q + \omega), \{[\beta_q, \beta_q + \omega) \cap B_{\alpha}^{\beta_q  + \omega}\}_{\alpha \in I_{\beta_q + \omega}})$ is Boolean saturated for each $q\in \omega$ we select $s_{t, 0, q}, s_{t, 1, q}\in [\beta_q, \beta_q + \omega)$ such that

\begin{center}
$\Coor_{\beta_q + \omega}(s_{t, 0, q}) \in b_{\alpha_{0, q}}^{\beta_q + \omega}\cap \bigcap_{i = 0}^q b_{\alpha_i}^{\beta_q + \omega}$

$\Coor_{\beta_q + \omega}(s_{t, 1, n}) \in b_{\alpha_{1, q}}^{\beta_q + \omega} \cap \bigcap_{i = 0}^q b_{\alpha_i}^{\beta_q + \omega}$
\end{center}

For all $\alpha \in I_{\Rel, \gamma} = I_{\Rel, \beta}$ we let

\begin{center}

$B_{\alpha}^{\gamma} = \begin{cases}B_{\alpha}^t$ if $\sigma(\alpha) = 0\\ B_{\alpha}^t \cup \{t\}$ if $\sigma(\alpha) = 1  \end{cases}$

\end{center}

\noindent We know $(\gamma, \{B_{\alpha}^{\gamma}\}_{\alpha \in I_{\Rel, \gamma}})$ is a model of $\Gamma_{\gamma}$ by Lemma \ref{addingpoints}.  Let $I_{\Conn, \gamma} = I_{\Conn, t} \cup \{t\}$ and $\Sequ_{\gamma} = \Sequ_t \cup \{(\{s_{t, 0, q}\}_{q \in \omega}, \{s_{t, 1, q}\}_{q \in \omega})\}$. That (i) - (vi) hold at $\gamma$ is a straightforward to check.  For example, for any $\alpha \in I_{\Rel, t+ 1}$, if $t \in B_{\alpha}^{t + 1}$ then the sequence $\{s_{t, 0, q}\}_{q\in \omega}$ and the sequence $\{s_{t, 1, q}\}_{q\in \omega}$ are eventually in $B_{\alpha}^{t + 1}$.

\noindent \textbf{Subcase F.2:  $\gamma = t + \omega 6k +1, \gamma \notin \omega$ and Subcase F.1 fails.}  Pick $\sigma \in \X_t$ and for all $\alpha \in I_{\Rel, \gamma}$ we let

\begin{center}

$B_{\alpha}^{\gamma} = \begin{cases} B_{\alpha}^{\beta_0}$ if $\sigma(\alpha) = 0 \\ B_{\alpha}^{\beta_0} \cup \{\beta_0\}$ if $\sigma(\alpha) = 1  \end{cases}$

\end{center}

\noindent By Lemma \ref{addingpoints} we see that $(\gamma, \{B_{\alpha}^{\gamma}\}_{\alpha \in I_{\Rel, \gamma}})$ is a model of $\Gamma_{\gamma}$, and the other inductive properties are easily seen to still hold.

\noindent \textbf{Subcase F.3: $\gamma = t + \omega 6k +n, \gamma \notin \omega, n \geq 2$.}  We will let $I_{\Conn, \gamma} = I_{\Conn, \gamma - 1}$.  Recall the function $f: R \rightarrow \bigcup_{\delta < \aleph_1}\{0, 1\}^{\delta}$ chosen at the onset of our construction.  If $J$ is a well-ordered set (for example, a subset of an ordinal) we let $\ord(J)$ denote the unique ordinal which is order isomorphic to $J$.  Given any function $g \in \bigcup_{\delta < \aleph_1}\{0, 1\}^{\delta}$ we let $\dom(g)$ denote the domain.  We will look at two possibilities.

Suppose $\dom(f(\gamma)) \leq \ord(I_{\Rel, \gamma})$.  Let $\iota:  \dom(f(\gamma)) \rightarrow \ord(I_{\Rel, \gamma})$ be the unique order embedding of $\dom(f(\gamma))$ as an initial interval of $\ord(I_{\Rel, \gamma})$.  If there exists $\sigma \in \X_{\gamma - 1} = \X_{\beta_0}$ such that $\sigma(\iota(\epsilon)) = (f(\gamma))(\epsilon)$ for all $\epsilon \in \dom(f(\gamma))$, then we select such a $\sigma \in \X_{\beta_0}$ and for all $\alpha \in I_{\Rel, \beta_0} = I_{\Rel, \gamma - 1} = I_{\Rel, \gamma}$ we let

\begin{center}

$B_{\alpha}^{\gamma} = \begin{cases}B_{\alpha}^{\gamma - 1}$ if $\sigma(\alpha) = 0\\ B_{\alpha}^{\gamma - 1} \cup \{\gamma - 1\}$ if $\sigma(\alpha) = 1 \end{cases}$

\end{center}

\noindent If either $\dom(f(\gamma)) \leq \ord(I_{\Rel, \gamma})$ or there does not exist such a $\sigma \in \X_{\gamma}$, we select an arbitrary $\sigma \in \X_{\gamma}$ and again for all $\alpha \in I_{\Rel, \gamma}$ we let

\begin{center}

$B_{\alpha}^{\gamma} = \begin{cases}B_{\alpha}^{\gamma - 1}$ if $\sigma(\alpha) = 0\\ B_{\alpha}^{\gamma - 1} \cup \{\gamma - 1\}$ if $\sigma(\alpha) = 1 \end{cases}$

\end{center}

We have now considered all ordinals $\gamma < \aleph_1$.  Let $I_{\Rel} = \bigcup_{\beta < \aleph_1} I_{\Rel, \beta}$.  For each $\alpha \in I_{\Rel}$ we let $B_{\alpha} = \bigcup_{\alpha \leq \beta } B_{\alpha}^{\beta}$.  Let $\Bo = \{B_{\alpha}\}_{\alpha \in I_{\Rel}}$.  Let $I_{\Log} = \bigcup_{\beta < \aleph_1} I_{\Log, \beta}$ and $\Gamma = \bigcup_{\beta < \aleph_1} \Gamma_{\beta}$.  Let $\Sequ = \bigcup_{\beta < \aleph_1} \Sequ_{\beta}$.  Let $\tau$ denote the topology $\tau(\Bo)$ on $\aleph_1$.  We check the properties that ought to hold for $(\aleph_1, \tau)$ (including that $\Bo$ is a basis for $\tau$) in the following section.
\end{section}

\begin{section}{Verification of Theorem \ref{main}}\label{verification}

In this section we finish the proof of Theorem \ref{main} by verifying the various claims about the space $(\aleph_1, \tau)$.  

\begin{remark}\label{modelindeed}  It is straightforward to check that $(\aleph_1, \Bo) \models \Gamma$ and also that $(\omega, \{\omega \cap B_{\alpha}\}_{\alpha \in I_{\Rel}})$ is a (Boolean saturated) model of $\Gamma$, arguing precisely as in Case L of our construction.  That $\Gamma$ satisfies $\dagger$ is also clear by construction.
\end{remark}

\begin{lemma}\label{basisindeed}  $\Bo$ is a basis for the topology $\tau = \tau(\Bo)$ on $\aleph_1$.  
\end{lemma}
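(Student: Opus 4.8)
The plan is to reduce everything to the standard criterion recalled just after the definition of a basis: if $\Bo$ is a family of subsets of a set $X$ with $\bigcup \Bo = X$, and such that for all $B_0,B_1\in\Bo$ and $x\in B_0\cap B_1$ there is $B_2\in\Bo$ with $x\in B_2\subseteq B_0\cap B_1$, then the family of unions of members of $\Bo$ is a topology having $\Bo$ as a basis. Since $\tau(\Bo)$ is by definition the smallest topology containing $\Bo$, and the topology of unions of members of $\Bo$ both contains $\Bo$ and is contained in every topology containing $\Bo$, the two topologies coincide; hence it suffices to verify the two conditions in that criterion with $X=\aleph_1$.

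First I would verify $\bigcup\Bo=\aleph_1$. By Remark \ref{modelindeed} we have $(\aleph_1,\Bo)\models\Gamma$. The construction applies Case C (the relevant ordinals $t+\omega(6k+4)+n$ with $n\geq1$ exist below $\aleph_1$), and each application contributes to $\Gamma$ a sentence of type (c); the triple-enumeration at such a stage is nonempty because a relation introduced at an earlier application of Case A has infinite complement by Boolean saturation, so two distinct points lying outside it are available. A sentence of type (c) includes the conjunct $(\forall x)[x\in\B_{\alpha_0}\cup\B_{\alpha_1}\cup\B_{\alpha_2}]$, so $\aleph_1=B_{\alpha_0}\cup B_{\alpha_1}\cup B_{\alpha_2}$ in the model, whence $\bigcup\Bo=\aleph_1$.

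Next, for the intersection condition, let $\alpha_0,\alpha_1\in I_{\Rel}$ and $x\in B_{\alpha_0}\cap B_{\alpha_1}$ be given. Since $T$ is unbounded, the ordinals of the form $t+\omega(6k+3)$ are cofinal in $\aleph_1$, so I would pick such a $\beta_0$ with $\beta_0>\max\{\alpha_0,\alpha_1\}+1$ and $\beta_0>x$. Then $h(\alpha_i)\leq\alpha_i+1<\beta_0$ gives $\alpha_0,\alpha_1\in I_{\Rel,\beta_0}$, and invariant (ii) together with $B_\alpha=\bigcup_{\alpha\leq\beta}B_\alpha^\beta$ yields $B_{\alpha_i}\cap\beta_0=B_{\alpha_i}^{\beta_0}$; since $x<\beta_0$ this gives $x\in B_{\alpha_0}^{\beta_0}\cap B_{\alpha_1}^{\beta_0}$. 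Hence the triple $(\alpha_0,\alpha_1,x)$ occurs in the enumeration used by Case B at stage $\beta_0$, say as $(\alpha_{m,0},\alpha_{m,1},x_m)$, and the construction then introduces a relation $\B_{\beta_0+m}$ with $\Theta_{\beta_0+m}\equiv[\B_{\alpha_0}\cap\B_{\alpha_1}\supseteq\B_{\beta_0+m}]\wedge[\B_{\beta_0+m}\neq\emptyset]$, placing $x_m$ into $B_{\beta_0+m}^{\beta_0+m}$ (this occurs in both Subcases B.1 and B.2). Thus $x=x_m\in B_{\beta_0+m}^{\beta_0+m}\subseteq B_{\beta_0+m}\in\Bo$, and since $(\aleph_1,\Bo)\models\Theta_{\beta_0+m}$ we conclude $B_{\beta_0+m}\subseteq B_{\alpha_0}\cap B_{\alpha_1}$, which is what we wanted.

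The argument is mostly bookkeeping. The only points I expect to require care are the identity $B_{\alpha_i}\cap\beta_0=B_{\alpha_i}^{\beta_0}$ — here one must remember that the superscript in $B_\alpha^\beta$ is a mere index and that these sets form an increasing chain per invariant (ii) — and the observation that the chosen triple genuinely enters Case B's enumeration with the indices $\alpha_0,\alpha_1$ themselves, so that the sentence it spawns is precisely the containment $\B_{\alpha_0}\cap\B_{\alpha_1}\supseteq\B_{\beta_0+m}$ used at the end.
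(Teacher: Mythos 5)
Your proposal is correct and follows essentially the same route as the paper: $\bigcup\Bo=\aleph_1$ via the type (c) sentences from Case C, and the intersection condition by locating a stage $\beta_0=t+\omega(6k+3)$ where the triple $(\alpha_0,\alpha_1,x)$ is handled by Case B, producing a type (b) sentence whose witness $B_{\beta_0+m}$ contains $x$ and is contained in $B_{\alpha_0}\cap B_{\alpha_1}$. The extra bookkeeping you supply (the identity $B_{\alpha_i}\cap\beta_0=B_{\alpha_i}^{\beta_0}$ and the nonemptiness of the Case C enumeration) is consistent with the construction's invariants.
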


\begin{proof}  Since $(\aleph_1, \Bo) \models \Gamma$ and by Case C of our construction, it is clear that $\bigcup \Bo = \aleph_1$. Suppose that $x \in B_{\alpha_0} \cap B_{\alpha_1}$.  Select $\beta_0 < \aleph_1$ which is of form $\beta_0 = t + \omega(6k + 3)$ large enough that $x \in \beta_0$ and $\alpha_0, \alpha_1 \in  I_{\Rel, \beta_0}$.  We have $B_{\alpha_0}^{\beta_0} = \beta_0 \cap B_{\alpha_0}$ and $B_{\alpha_1}^{\beta_0} = \beta_0 \cap B_{\alpha_1}$, so in particular $x\in  B_{\alpha_0}^{\beta_0} \cap B_{\alpha_1}^{\beta_0}$.  By the treatment of Case B there exists $\beta_0 \leq \alpha_2 < \beta_0 + \omega$ such that $x \in B_{\alpha_2}^{\alpha_2} \subseteq B_{\alpha_0}^{\alpha_2} \cap B_{\alpha_1}^{\alpha_2}$ and $\Theta_{\alpha_2}\in \Gamma_{\alpha_2 + 1}$ with $\Theta_{\alpha_2} \equiv [\B_{\alpha_0} \cap \B_{\alpha_1} \supseteq \B_{\alpha_2}]\wedge [\B_{\alpha_2} \neq \emptyset]$.  Since $(\aleph_1, \Bo)$ models $\Gamma$, we have $x\in B_{\alpha_2} \subseteq B_{\alpha_0}\cap B_{\alpha_1}$.
\end{proof}

\begin{lemma}\label{regularindeed}(Property (1))  The space $(\aleph_1, \tau)$ is regular.
\end{lemma}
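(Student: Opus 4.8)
The plan is to read off both halves of regularity from the way Cases A and D were built into the construction: Case A, which attaches type-(a) sentences (compare Lemma \ref{makingHausdorff}), will yield the Hausdorff condition, and Case D, which attaches type-(d) sentences (compare Lemma \ref{makingregularity}), will yield the separation of a point from a disjoint closed set. I will use throughout that $(\aleph_1,\Bo)\models\Gamma$ (Remark \ref{modelindeed}), that $\Bo$ is a basis (Lemma \ref{basisindeed}), and that $B_\alpha^{\beta_0}=\beta_0\cap B_\alpha$ whenever $\alpha\in I_{\Rel,\beta_0}$.

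First I would dispose of the Hausdorff condition. Given distinct $x,y<\aleph_1$, fix an ordinal $\beta_0$ of the form $t+\omega(6k+2)$ (these are cofinal in $\aleph_1$) with $x,y<\beta_0$. At that $\beta_0$, Case A enumerates all ordered pairs of distinct points below $\beta_0$, so $(x,y)=(x_m,y_m)$ for some $m\in 2\omega$, and this pair is handled at the stage $\beta_0+n$ with $m=n-1$; there a type-(a) sentence $\Theta_{\beta_0+n}\equiv[\B_{\beta_0+m}\cap\B_{\beta_0+m+1}=\emptyset]\wedge[\B_{\beta_0+m}\neq\emptyset]\wedge[\B_{\beta_0+m+1}\neq\emptyset]$ is introduced and (in every subcase) $x\in B_{\beta_0+m}^{\beta_0+m}\subseteq B_{\beta_0+m}$ and $y\in B_{\beta_0+m+1}$. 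Since $(\aleph_1,\Bo)\models\Theta_{\beta_0+n}$, the basis elements $B_{\beta_0+m}$ and $B_{\beta_0+m+1}$ are disjoint, and they separate $x$ and $y$.

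Next I would handle the separation of a point from a closed set. Let $C\subseteq\aleph_1$ be closed and $x\notin C$. Since $\Bo$ is a basis and $\aleph_1\setminus C$ is open, choose $\alpha\in I_{\Rel}$ with $x\in B_\alpha\subseteq\aleph_1\setminus C$. Now fix $\beta_0$ of the form $t+\omega(6k+5)$ (again cofinal in $\aleph_1$) with $\alpha\in I_{\Rel,\beta_0}$ and $x<\beta_0$, so that $x\in\beta_0\cap B_\alpha=B_\alpha^{\beta_0}$; then the pair $(B_\alpha^{\beta_0},x)$ occurs in the Case D enumeration, say as $(\alpha_m,x_m)$, and is handled at some stage $\beta_0+n$ with $m=n-1$, introducing a type-(d) sentence $\Theta_{\beta_0+n}$ relating $\B_{\beta_0+m},\B_{\beta_0+m+1},\B_\alpha$ and placing $x\in B_{\beta_0+m}^{\beta_0+m}\subseteq B_{\beta_0+m}$ (in either subcase of Case D). From $(\aleph_1,\Bo)\models\Theta_{\beta_0+n}$ I extract $B_{\beta_0+m}\subseteq B_\alpha$, $B_{\beta_0+m}\cap B_{\beta_0+m+1}=\emptyset$, and $B_\alpha\cup B_{\beta_0+m+1}=\aleph_1$. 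The last of these gives $C\subseteq\aleph_1\setminus B_\alpha\subseteq B_{\beta_0+m+1}$, so $U_x:=B_{\beta_0+m}$ and $U_C:=B_{\beta_0+m+1}$ are disjoint open sets with $x\in U_x$ and $C\subseteq U_C$. (Equivalently this shows $\overline{B_{\beta_0+m}}\subseteq B_\alpha$, i.e.\ $\Bo$ enjoys the ``shrinking'' property characterizing regularity among Hausdorff spaces.)

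I do not expect a genuine topological obstacle here: the content was front-loaded into the design of Cases A and D and into Lemmas \ref{makingHausdorff} and \ref{makingregularity}. The step needing the most care is the bookkeeping — checking that every ordered pair of distinct ordinals really is enumerated in some instance of Case A, and every pair $(\alpha,x)$ with $x\in B_\alpha$ in some instance of Case D (this uses that $B_\alpha^{\beta_0}$ stabilizes to $\beta_0\cap B_\alpha$ once $\alpha\in I_{\Rel,\beta_0}$, and that ordinals of the required forms $t+\omega(6k+2)$ and $t+\omega(6k+5)$ are cofinal in $\aleph_1$) — together with matching the conjuncts of the templates (a) and (d) in $\dagger$ to the inclusions and disjointnesses quoted above.
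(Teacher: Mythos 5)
Your proposal is correct and follows the same route as the paper: Case A with a type-(a) sentence for the Hausdorff condition, and Case D with a type-(d) sentence (reading off $\B_{\beta_0+m}\subseteq\B_{\alpha}$, the disjointness conjunct from template (d) of $\dagger$, and $(\forall x)[x\in\B_{\alpha}\cup\B_{\beta_0+m+1}]$) to separate a point from a closed set via a basis element $B_\alpha$ around the point. The paper's proof leaves the Case D half as ``argued along similar lines''; you have simply written out those details, and done so correctly.
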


\begin{proof}  If $x, y \in \aleph_1$ are distinct then we can select $\beta_0 < \aleph_1$ which is of form $\beta_0 = t + \omega(6k + 2)$ with $x, y < \beta_0$.  By the treatment of Case A there exist $\beta_0 \leq \alpha_0 < \alpha_1 <\beta_0 + \omega$ such that $B_{\alpha_0}^{\alpha_1} \cap B_{\alpha_1}^{\alpha_1} = \emptyset$, $x\in B_{\alpha_0}^{\alpha_1}$ and $y\in B_{\alpha_1}^{\alpha_1}$, and accompanying $\Theta_{\alpha_1}$ of type (a).  That $x \in B_{\alpha_0}$, $y \in B_{\alpha_1}$ and $B_{\alpha_0} \cap B_{\alpha_1} = \emptyset$ is immediate.  Thus the space is Hausdorff, and regularity is argued along similar lines, using Case D and the fact that $\Bo$ is a basis for $\tau$.
\end{proof}

\begin{lemma}\label{separableindeed}(Property (2))  The space $(\aleph_1, \tau)$ is separable.
\end{lemma}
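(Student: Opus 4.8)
The plan is to exhibit an explicit countable dense subset, namely $\omega \subseteq \aleph_1$ itself. Since $\Bo$ is a basis for $\tau$ (Lemma \ref{basisindeed}), density of $\omega$ amounts to showing that every nonempty basic open set meets $\omega$, i.e. that whenever $\alpha_0,\ldots,\alpha_n \in I_{\Rel}$ satisfy $B_{\alpha_0}\cap\cdots\cap B_{\alpha_n}\neq\emptyset$, this intersection contains a natural number.

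First I would fix such an intersection and pick a witness $x\in B_{\alpha_0}\cap\cdots\cap B_{\alpha_n}$. Working in the model $(\aleph_1,\Bo)$ of $\Gamma$ (Remark \ref{modelindeed}), the coordinates of $x$ form a point $\sigma=\Coor_{\aleph_1}(x)\in\X_{\aleph_1}$ with $\sigma(\alpha_i)=1$ for every $i\leq n$; hence $\sigma\in\X_{\aleph_1,\emptyset,\{\alpha_0,\ldots,\alpha_n\}}$, so this set is nonempty. Then I would invoke the other assertion of Remark \ref{modelindeed}, that $(\omega,\{\omega\cap B_{\alpha}\}_{\alpha\in I_{\Rel}})$ is a Boolean saturated model of $\Gamma$. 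By definition of Boolean saturation, the preimage of $\X_{\aleph_1,\emptyset,\{\alpha_0,\ldots,\alpha_n\}}$ under the coordinates function $\omega\to\X_{\aleph_1}$ of this model is infinite, in particular nonempty. Any $m$ in this preimage satisfies $m\in\omega\cap B_{\alpha_i}$ for all $i\leq n$, hence $m\in\omega\cap(B_{\alpha_0}\cap\cdots\cap B_{\alpha_n})$, which finishes the proof.

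There is essentially no obstacle at this stage: the real work was front-loaded into the construction, where Case E together with the Boolean-saturation bookkeeping in Cases A--D and the limit case were arranged precisely so that the restriction of $\Bo$ to $\omega$ would remain a Boolean saturated model of $\Gamma$; once Remark \ref{modelindeed} records this, density of $\omega$ drops out. One remark worth making is that full Boolean saturation is strictly more than what the argument needs — mere nonemptiness of the preimages would already suffice — but it costs nothing to use the stronger property.
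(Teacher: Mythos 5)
Your proposal is correct and follows the paper's approach: the paper likewise takes $\omega$ as the countable dense set and justifies density via the fact (Remark \ref{modelindeed}) that $(\omega, \{\omega\cap B_{\alpha}\}_{\alpha\in I_{\Rel}})$ models $\Gamma$, hence $B_{\alpha}\cap\omega\neq\emptyset$ for every $\alpha\in I_{\Rel}$. The only difference is that the paper gets by with mere modelhood rather than Boolean saturation, since Lemma \ref{basisindeed} already makes $\Bo$ a genuine basis and so one never needs to handle finite intersections separately --- a simplification you yourself correctly note at the end.
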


\begin{proof}  Since $(\omega, \{\omega \cap B_{\alpha}\}_{\alpha \in I_{\Rel}}) \models \Gamma$, we know more particularly that $B_{\alpha}\cap \omega \neq \emptyset$ for all $\alpha \in I_{\Rel}$, which demonstrates that $\omega$ is dense in $(\aleph_1, \tau)$.

\end{proof}

Towards property (3) we prove the following.

\begin{lemma} \label{opensetsaccumulate}  Suppose that $\emptyset \neq L \subseteq I_{\Rel}$.  The set 

\begin{center}
$J = \{\beta \in \aleph_1 \mid \emptyset \neq \bigcup_{\alpha \in I_{\Rel, \beta} \cap L} B_{\alpha}^{\beta} = \beta \cap \bigcup_{\alpha \in L} B_{\alpha}\}$
\end{center}

\noindent is club in $\aleph_1$.

\end{lemma}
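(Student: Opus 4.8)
The plan is to show directly that $J$ is closed and unbounded. Before doing either, I would record two bookkeeping facts drawn from the inductive hypotheses of the construction. First, for $\alpha \in I_{\Rel,\beta}$ one has $B_\alpha^\beta = B_\alpha \cap \beta$: this follows from the coherence relation $B_\alpha^{\beta_0} = B_\alpha^\beta \cap \beta_0$, the containment $B_\alpha^\beta \subseteq \beta$, and $B_\alpha = \bigcup_{\gamma} B_\alpha^\gamma$. Second, for a limit ordinal $\beta$ one has $I_{\Rel,\beta} = \bigcup_{\beta_0 < \beta} I_{\Rel,\beta_0}$, since $h(\alpha) < \beta$ holds iff $h(\alpha) < \beta_0$ for some $\beta_0 < \beta$. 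An immediate consequence of the first fact is that the inclusion $\bigcup_{\alpha \in I_{\Rel,\beta}\cap L} B_\alpha^\beta \subseteq \beta \cap \bigcup_{\alpha\in L} B_\alpha$ holds for \emph{every} $\beta$; thus the substantive content of ``$\beta \in J$'' is the reverse inclusion together with nonemptiness.

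For closedness: let $\{\beta_\xi\}$ be an increasing sequence in $J$ with supremum $\beta$, necessarily a limit ordinal. Using the second fact, $I_{\Rel,\beta}\cap L = \bigcup_\xi (I_{\Rel,\beta_\xi}\cap L)$, and for $\alpha$ in this union $B_\alpha^\beta = B_\alpha\cap\beta = \bigcup_\xi (B_\alpha\cap\beta_\xi)$. Reindexing the double union and using $\beta_\xi\in J$ gives $\bigcup_{\alpha\in I_{\Rel,\beta}\cap L} B_\alpha^\beta = \bigcup_\xi\big(\beta_\xi \cap \bigcup_{\alpha\in L} B_\alpha\big) = \beta \cap \bigcup_{\alpha\in L} B_\alpha$, and this set is nonempty since each $\beta_\xi \cap \bigcup_{\alpha\in L} B_\alpha$ is. Hence $\beta \in J$.

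For unboundedness: fix some $\alpha^* \in L$ (possible as $L \neq \emptyset$) and define $\Phi\colon \aleph_1 \to \aleph_1$ by $\Phi(x) = \min\{h(\alpha)+1 : \alpha \in L,\ x\in B_\alpha\}$ when $x \in \bigcup_{\alpha\in L}B_\alpha$, and $\Phi(x)=0$ otherwise. The set $C$ of ordinals $\beta < \aleph_1$ closed under $\Phi$ (that is, $\Phi[\beta]\subseteq\beta$) is club by the standard argument. I claim $C \cap (h(\alpha^*),\aleph_1) \subseteq J$, which suffices. Indeed, fix $\beta \in C$ with $\beta > h(\alpha^*)$. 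Given $x < \beta$ with $x \in \bigcup_{\alpha\in L}B_\alpha$, let $\alpha_0 \in L$ realize $\Phi(x) = h(\alpha_0)+1$; since $\beta \in C$ we get $h(\alpha_0) < \Phi(x) < \beta$, so $\alpha_0 \in I_{\Rel,\beta}\cap L$ and $x \in B_{\alpha_0}\cap\beta = B_{\alpha_0}^\beta$. This yields the nontrivial inclusion $\beta \cap \bigcup_{\alpha\in L}B_\alpha \subseteq \bigcup_{\alpha\in I_{\Rel,\beta}\cap L} B_\alpha^\beta$, hence equality. Nonemptiness holds because $\alpha^* \in I_{\Rel,\beta}$ (as $h(\alpha^*)<\beta$) and $B_{\alpha^*}^\beta \neq \emptyset$ by induction hypothesis (iii). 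Combining with closedness, $J$ is club.

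The only delicate point — and the step I would double-check most carefully — is the first bookkeeping fact and the precise interplay of $B_\alpha$, $B_\alpha^\beta$ and $I_{\Rel,\beta}$, in particular the inequality $h(\alpha)\le\alpha+1$ which guarantees that once $\beta$ is large enough relative to $\alpha$ we genuinely have $\alpha\in I_{\Rel,\beta}$. Everything else is the routine closure-point argument for club sets.
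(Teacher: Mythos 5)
Your proof is correct. The closedness half is essentially the paper's argument: both pass to an increasing $\omega$-sequence in $J$ with supremum $\beta$ and telescope the unions, using the coherence $B_{\alpha}^{\beta_0}=B_{\alpha}^{\beta}\cap\beta_0$ (equivalently your first bookkeeping fact $B_{\alpha}^{\beta}=B_{\alpha}\cap\beta$) together with $I_{\Rel,\beta}=\bigcup_{\beta_0<\beta}I_{\Rel,\beta_0}$ at limits. For unboundedness the routes differ in packaging. The paper argues directly: starting from any $\beta_0$ with $I_{\Rel,\beta_0}\cap L\neq\emptyset$ it builds an increasing sequence $\beta_0<\beta_1<\cdots$ in which each stage absorbs the countable discrepancy set $\bigl(\beta_n\cap\bigcup_{\alpha\in L}B_{\alpha}\bigr)\setminus\bigcup_{\alpha\in I_{\Rel,\beta_n}\cap L}B_{\alpha}^{\beta_n}$, and then takes the supremum. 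You instead observe that the single function $x\mapsto\min\{h(\alpha)+1:\alpha\in L,\ x\in B_{\alpha}\}$ controls membership, so that every closure point of it above $h(\alpha^{*})$ lies in $J$, and you invoke the standard fact that closure points of a function $\aleph_1\to\aleph_1$ form a club. The two arguments rest on the same countability phenomenon (the paper's iteration is exactly the one used to show your set $C$ is unbounded), but yours yields the marginally stronger conclusion that $J$ contains a club of the form $C\cap(h(\alpha^{*}),\aleph_1)$, and it isolates cleanly why ``$\beta\in J$'' is a closure condition; the paper's version avoids introducing the auxiliary function. All the supporting facts you flag as delicate ($h(\alpha)\leq\alpha+1$, condition (ii) coherence, and nonemptiness of $B_{\alpha^{*}}^{\beta}$ from condition (iii)) do hold in the construction, so the argument goes through.
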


\begin{proof}  To see that $J$ is unbounded we let $\beta_0 < \aleph_1$ be given, without loss of generality such that $I_{\Rel, \beta_0} \cap L \neq \emptyset$.  Supposing that $\beta_n$ has already been defined we select $\aleph_1 > \beta_{n + 1} > \beta_n$ large enough that $$(\beta_n \cap \bigcup_{\alpha \in L} B_{\alpha}) \setminus \bigcup_{\alpha \in I_{\Rel, \beta_n}\cap L} B_{\alpha}^{\beta_n} \subseteq \bigcup_{\alpha \in I_{\Rel, \beta_{n+ 1}} \cap L} B_{\alpha}^{\beta_{n+1}}$$ (this is possible since the set on the left is countable).  Letting $\beta = \sup_{n\in \omega} \beta_n$ it is easy to see that $\beta \cap \bigcup_{\alpha \in L} B_{\alpha} \subseteq  \bigcup_{\alpha \in I_{\Rel, \beta_{n+ 1}} \cap L} B_{\alpha}^{\beta}$, and the reverse inclusion holds necessarily, and clearly the set $\beta \cap \bigcup_{\alpha \in L} B_{\alpha}$ is nonempty as $I_{\Rel, \beta} \cap L \supseteq I_{\Rel, \beta_0} \cap L \neq \emptyset$.  Thus $\beta \in J$.

To see that $J$ is closed we let $\{\beta_n\}_{n\in \omega}$ be an increasing sequence in $J$ and $\beta = \sup_{n \in \omega} \beta_n$.  Then

\begin{center}
$\bigcup_{\alpha \in I_{\Rel, \beta}\cap L} B_{\alpha}^{\beta} \supseteq \bigcup_{n\in \omega} (\bigcup_{\alpha \in I_{\Rel, \beta_n} \cap L} B_{\alpha}^{\beta_n})$

$= \bigcup_{n \in \omega} (\beta_n \cap (\bigcup_{\alpha \in L} B_{\alpha}))$

$= \beta \cap (\bigcup_{\alpha \in L} B_{\alpha})$

$\supseteq \bigcup_{\alpha \in I_{\Rel, \beta}\cap L} B_{\alpha}^{\beta}$

\end{center}

\noindent and as $\bigcup_{\alpha \in I_{\Rel, \beta_0} \cap L} \neq \emptyset$, the set $\bigcup_{\alpha \in I_{\Rel, \beta}\cap L} B_{\alpha}^{\beta} = \beta \cap (\bigcup_{\alpha \in L} B_{\alpha})$ is nonempty.  Thus $\beta \in J$.

\end{proof}

\begin{lemma} \label{connectedindeed}(Property (3))  The space $(\aleph_1, \tau)$ is connected.
\end{lemma}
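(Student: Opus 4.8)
The plan is to argue by contradiction, using $\diamondsuit$ to detect any would-be disconnection and the convergent sequences built in Case F to block it. So suppose $\aleph_1 = V_0 \sqcup V_1$ is a nontrivial disconnection: $V_0$ and $V_1$ are disjoint, nonempty, and open, hence also closed. Since $\Bo$ is a basis (Lemma \ref{basisindeed}) I may write $V_0 = \bigcup_{\alpha \in L_0} B_\alpha$ and $V_1 = \bigcup_{\alpha \in L_1} B_\alpha$ with $\emptyset \neq L_0, L_1 \subseteq I_{\Rel}$. Because $\omega$ is dense (Lemma \ref{separableindeed}) and $V_0, V_1$ are nonempty open, each of them meets $\omega$; hence $t \cap V_0 \neq \emptyset$ and $t \cap V_1 \neq \emptyset$ for every limit ordinal $t > \omega$.

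First I would locate a suitable $t \in T$ by intersecting a club with a stationary set. By Lemma \ref{opensetsaccumulate}, applied to $L_0$ and then to $L_1$, the set
\[
C = \left\{\beta < \aleph_1 : \bigcup_{\alpha \in I_{\Rel,\beta} \cap L_0} B_\alpha^\beta = \beta \cap V_0 \ \text{ and }\ \bigcup_{\alpha \in I_{\Rel,\beta} \cap L_1} B_\alpha^\beta = \beta \cap V_1 \right\}
\]
is club in $\aleph_1$. By Lemma \ref{twosetsnotone}, applied to the pair $(V_0, V_1)$, the set $\{t \in T : t \cap V_0 = S_{t,0} \text{ and } t \cap V_1 = S_{t,1}\}$ is stationary, so it meets $C$; fix a $t$ lying in both. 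Put $L_i^t = I_{\Rel,t} \cap L_i$ for $i \in \{0,1\}$; these are nonempty because $t \cap V_i \neq \emptyset$, and they satisfy $\bigcup_{\alpha \in L_0^t} B_\alpha^t = t \cap V_0 = S_{t,0}$, $\bigcup_{\alpha \in L_1^t} B_\alpha^t = t \cap V_1 = S_{t,1}$, and $\big(\bigcup_{\alpha \in L_0^t} B_\alpha^t\big) \sqcup \big(\bigcup_{\alpha \in L_1^t} B_\alpha^t\big) = (t \cap V_0) \sqcup (t \cap V_1) = t$. Thus the defining condition of Subcase F.1 holds at the ordinal $\gamma = t+1$, so that subcase was executed there.

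Next I would extract the relevant content of Subcase F.1 at this $t$. That subcase puts $t$ into $I_{\Conn}$ and fixes strictly increasing sequences $\{s_{t,0,q}\}_{q \in \omega}$, $\{s_{t,1,q}\}_{q \in \omega}$ with $\sup_q s_{t,0,q} = \sup_q s_{t,1,q} = t$; writing $\hat L_0, \hat L_1 \subseteq I_{\Rel,t}$ for the (nonempty) index sets fixed there, so that $\bigcup_{\alpha \in \hat L_i} B_\alpha^t = S_{t,i} = t \cap V_i$, the construction arranges that for every $q$ and each $i$ there is $\alpha_{i,q} \in \hat L_i$ with $s_{t,i,q} \in B_{\alpha_{i,q}}$. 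By inductive condition (v), $s_{t,i,q} < t$, and therefore $s_{t,i,q} \in B_{\alpha_{i,q}} \cap t = B_{\alpha_{i,q}}^t \subseteq \bigcup_{\alpha \in \hat L_i} B_\alpha^t = t \cap V_i \subseteq V_i$. Hence the whole sequence $\{s_{t,0,q}\}_{q}$ lies in $V_0$ and $\{s_{t,1,q}\}_{q}$ lies in $V_1$.

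Finally I would show both sequences converge to the point $t$ and conclude. Let $B_\alpha$ be any basic neighbourhood of $t$, i.e.\ $\alpha \in I_{\Rel}$ with $t \in B_\alpha$, and choose $\beta < \aleph_1$ with $\beta > t$ and $\alpha \in I_{\Rel,\beta}$. Then $t \in I_{\Conn,\beta}$ (since $t \in I_{\Conn,t+1}$) and $t \in B_\alpha \cap \beta = B_\alpha^\beta$, so inductive condition (vi) yields $N \in \omega$ with $\{s_{t,0,q}\}_{q \geq N} \cup \{s_{t,1,q}\}_{q \geq N} \subseteq B_\alpha^\beta \subseteq B_\alpha$. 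Since $\Bo$ is a basis, it follows that every neighbourhood of $t$ contains a tail of each of the two sequences, i.e.\ both sequences converge to $t$; in particular $t \in \overline{V_0}$ and $t \in \overline{V_1}$. But $V_0$ and $V_1$ are disjoint closed sets, so $t \in V_0 \cap V_1 = \emptyset$, a contradiction. Therefore $(\aleph_1,\tau)$ admits no nontrivial disconnection, and is connected. The step I expect to be the main obstacle is the reflection argument in the second paragraph --- checking that the $\diamondsuit$-guess $(S_{t,0},S_{t,1})$ together with Lemma \ref{opensetsaccumulate}'s reflection of the partial unions $\bigcup_{\alpha} B_\alpha^t$ actually triggers Subcase F.1 at $t$; once that is secured, inductive conditions (v) and (vi) carry the argument with only routine bookkeeping.
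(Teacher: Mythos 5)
Your proposal is correct and follows essentially the same route as the paper: decompose the putative disconnection into basic open sets, reflect via Lemma \ref{opensetsaccumulate} and the $\diamondsuit$-pair sequence to find $t \in T$ triggering Subcase F.1, and use inductive condition (vi) to see that the two sequences (lying in $S_{t,0} \subseteq V_0$ and $S_{t,1} \subseteq V_1$ respectively) both converge to $t$, forcing $V_0 \cap V_1 \neq \emptyset$. The only differences are presentational — you spell out the verification that F.1's hypothesis actually holds at $t+1$ (which the paper leaves implicit) and phrase the final contradiction via closedness of $V_0, V_1$ rather than the paper's direct observation that a tail of one sequence must land in the other open set.
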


\begin{proof}  Suppose for contradiction that $\aleph_1 = V_0 \sqcup V_1$ is a nontrivial disconnection.  Then we can select nonempty $L_0, L_1 \subseteq I_{\Rel}$ such that $V_0 = \bigcup_{\alpha \in L_0} B_{\alpha}$ and $V_1 = \bigcup_{\alpha \in L_1} B_{\alpha}$.  For $i \in \{0, 1\}$ we let

\begin{center}
$J_i = \{\beta \in \aleph_1 \mid \emptyset \neq \bigcup_{\alpha \in I_{\Rel, \beta} \cap L_i} B_{\alpha}^{\beta} = \beta \cap \bigcup_{\alpha \in L_i} B_{\alpha}\}$
\end{center}

\noindent and by Lemma \ref{opensetsaccumulate} each of $J_0, J_1$ is club in $\aleph_1$.  By our selection of sequence $\{(S_{\alpha, 0}, S_{\alpha, 1})\}_{\alpha \in T}$ we know that the set

\begin{center}
$J_2 = \{t \in T \mid [t \cap V_0 = S_{t, 0}] \wedge [t \cap V_1 = S_{t, 1}]\}$
\end{center}

\noindent is stationary in $\aleph_1$.  Then we can select $t \in J_0 \cap J_1 \cap J_2$, and $t + 1$ is considered in Subcase F.1.  Thus we have $t \in I_{\Conn, t+1}$ and sequences $\{s_{t, 0, q}\}_{q \in \omega}$ and $\{s_{t, 1, q}\}_{q \in \omega}$ such that $\{s_{t, i, q}\}_{q\in \omega} \subseteq S_{t, i}$ for $i\in \{0, 1\}$.  Either $t \in V_0$ or $t \in V_1$, so that either $\emptyset \neq S_{t, 0} \cap V_1 \subseteq V_0 \cap V_1$ or $\emptyset \neq V_0 \cap S_{t, 1} \subseteq V_0 \cap V_1$, and in either case we obtain a contradiction.  Thus $(\aleph_1, \tau)$ is connected.

\end{proof}

Property (4), that the space is of cardinality $\aleph_1$, is quite obvious.

\begin{lemma} \label{separatesindeed}(Property (5))  For every nonempty open $O$ in the space $(\aleph_1, \tau)$ the subspace $\aleph_1 \setminus O$ is totally separated.
\end{lemma}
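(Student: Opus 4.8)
The plan is to make essential use of Case C of the construction, which was designed precisely to force property (5): for every nonempty open $O$ and every pair of distinct points $x,y\in\aleph_1\setminus O$, I want to produce a partition $\aleph_1\setminus O = V_0\sqcup V_1$ into relatively open sets with $x\in V_0$ and $y\in V_1$ (if $\aleph_1\setminus O$ has at most one point there is nothing to show). The idea is to find a basis element contained in $O$ and then locate a stage of Case C that ``sees'' the relevant triple, at which point a type-(c) sentence supplies exactly the two open sets I need.

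First I would fix the data. Since $\Bo$ is a basis for $\tau$ (Lemma \ref{basisindeed}) and $O\neq\emptyset$, write $O=\bigcup_{\alpha\in L}B_\alpha$ with $\emptyset\neq L\subseteq I_{\Rel}$ and pick $\alpha_*\in L$, so $B_{\alpha_*}\subseteq O$; because $x,y\notin O$ we automatically get $x,y\notin B_{\alpha_*}$. Next I would choose $\beta_0=t+\omega(6k+4)$ with $t\in T$ large enough that $x,y<\beta_0$ and $\alpha_*\in I_{\Rel,\beta_0}$ — such $\beta_0$ are cofinal in $\aleph_1$ since $T$ is club. Using the coherence relation $B_{\alpha_*}^{\beta_0}=\beta_0\cap B_{\alpha_*}$ (clause (ii) of the inductive hypotheses), the triple $(B_{\alpha_*}^{\beta_0},x,y)$ meets all the requirements imposed in Case C (namely $\alpha_*\in I_{\Rel,\beta_0}$, $x\neq y$ in $\beta_0$, and $x,y\notin B_{\alpha_*}^{\beta_0}$), hence appears in the enumeration used there, say as $(B_{\alpha_m}^{\beta_0},x_m,y_m)$ with $\alpha_m=\alpha_*$ and $m$ even.

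Then I would read off the consequences of that stage. Case C adjoins the type-(c) sentence $\Theta_{\beta_0+m+1}$, which asserts $\B_{\beta_0+m}\cap\B_{\beta_0+m+1}\subseteq\B_{\alpha_*}$ and $\B_{\beta_0+m}\cup\B_{\beta_0+m+1}\cup\B_{\alpha_*}=\aleph_1$, and the construction explicitly places $x=x_m$ into $B_{\beta_0+m}$ and $y=y_m$ into $B_{\beta_0+m+1}$ (this is where one glances at Subcases C.1 and C.2, but in both it is immediate that $x_m\in B_{\beta_0+m}^{\beta_0+m}\subseteq B_{\beta_0+m}$ and $y_m\in B_{\beta_0+m+1}^{\beta_0+m+1}\subseteq B_{\beta_0+m+1}$). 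Since $(\aleph_1,\Bo)\models\Gamma$ and $\Theta_{\beta_0+m+1}\in\Gamma$ (Remark \ref{modelindeed}), this yields $B_{\beta_0+m}\cap B_{\beta_0+m+1}\subseteq B_{\alpha_*}\subseteq O$ and $B_{\beta_0+m}\cup B_{\beta_0+m+1}\cup B_{\alpha_*}=\aleph_1$.

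The final step is to assemble the disconnection: set $Y=\aleph_1\setminus O$ and $V_i=B_{\beta_0+m+i}\cap Y$ for $i\in\{0,1\}$. Both are open in $Y$; $V_0\cap V_1=\emptyset$ because $B_{\beta_0+m}\cap B_{\beta_0+m+1}\subseteq O$; and $V_0\cup V_1=Y$ because intersecting $B_{\beta_0+m}\cup B_{\beta_0+m+1}\cup B_{\alpha_*}=\aleph_1$ with $Y$ kills the last term ($B_{\alpha_*}\cap Y=\emptyset$). So $Y=V_0\sqcup V_1$ is a disconnection with $x\in V_0$ and $y\in V_1$, proving total separation. I do not expect a real obstacle here; the only point requiring care is the bookkeeping in the matching-up step — confirming that the intended triple genuinely occurs in the Case C enumeration at $\beta_0$ (which is why clause (ii) and the cofinality of the ordinals $t+\omega(6k+4)$ are needed) and that the two newly created basis elements contain $x$ and $y$ respectively.
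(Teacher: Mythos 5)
Your proposal is correct and follows essentially the same route as the paper: reduce to a basis element inside $O$, choose $\beta_0 = t+\omega(6k+4)$ capturing $x$, $y$, and $\alpha_*$, and invoke the Case C stage that enumerates the triple $(B_{\alpha_*}^{\beta_0},x,y)$ to obtain a type-(c) sentence whose two new basis elements separate $x$ from $y$ modulo $B_{\alpha_*}\subseteq O$. The only (cosmetic) difference is that the paper first reduces to the case where $O$ itself is a basis element and then uses heredity of total separation, whereas you intersect the resulting cover directly with $\aleph_1\setminus O$.
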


\begin{proof}  Clearly it suffices to check the claim in the case where $O$ is a basis element.  Let $\alpha \in I_{\Rel}$ be given, as well as distinct $x, y \in \aleph_1 \setminus B_{\alpha}$.  Select $\beta_0 < \aleph_1$ of form $\beta_0 = t + \omega(6k + 4)$ which is large enough that $x, y \in \beta_0$ and $\alpha \in I_{\Rel, \beta_0}$.  By the treatment of Case C there exist $\alpha_0, \alpha_1 \in [\beta_0, \beta_0 + \omega)$ such that $x \in B_{\alpha_0}^{\alpha_1}$, $y\in B_{\alpha_1}^{\alpha_1}$, and $B_{\alpha_0}^{\alpha_1} \cap B_{\alpha_1}^{\alpha_1} \subseteq B_{\alpha}^{\alpha_1}$, and accompanying $\Theta_{\alpha_1}$ of type (c).  Thus $x \in B_{\alpha_0}$, $y\in B_{\alpha_1}$, $B_{\alpha_0} \cap B_{\alpha_1} \subseteq B_{\alpha}$, and $B_{\alpha} \cup B_{\alpha_0} \cup B_{\alpha_1} = \aleph_1$, as $(\aleph_1, \Bo) \models \Gamma$.
\end{proof}

\begin{lemma} \label{bigopensindeed}  (Property(6)) Each nonempty open subset in $(\aleph_1, \tau)$ is uncountable.
\end{lemma}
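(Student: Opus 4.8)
I want to show that every nonempty basis element $B_\alpha$ (with $\alpha \in I_{\Rel}$) is uncountable, which clearly suffices since every nonempty open set contains one. The key structural feature to exploit is the choice of the function $f : R \to \bigcup_{\delta < \aleph_1}\{0,1\}^\delta$ made at the outset, which has the property that every element of its codomain has uncountable preimage, together with the way $f$ is used in Subcase F.3. The strategy is: fix $\alpha \in I_{\Rel}$ and show that $B_\alpha$ acquires a new element at uncountably many stages $\gamma$ of the form treated in Subcase F.3.

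First I would establish a ``local'' statement: there is some stage $\beta_1 < \aleph_1$ and some $\sigma_0 \in \X_{\beta_1}$ with $\sigma_0(\alpha) = 1$ (this exists because $(\aleph_1, \Bo) \models \Gamma$ models $\B_\alpha \neq \emptyset$, so some ordinal lies in $B_\alpha$, and by Lemma \ref{cutoff} its coordinate restriction lands in $\X_{\beta_1}$ with value $1$ on $\alpha$, once $\beta_1$ is large enough that $\alpha \in I_{\Rel, \beta_1}$). Using Lemma \ref{extension}, $\sigma_0$ extends to elements of $\X_\gamma$ for every $\gamma \geq \beta_1$. Now consider a stage $\gamma = t + \omega 6k + n$ with $n \geq 2$, $\gamma \notin \omega$, falling under Subcase F.3, chosen large enough that $\alpha \in I_{\Rel, \gamma - 1}$ and moreover $\gamma - 1 \geq \beta_1$. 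Let $\iota : \dom(f(\gamma)) \to \ord(I_{\Rel, \gamma})$ be the order embedding considered there. The point is this: if $f(\gamma)$ is chosen to be the initial segment of (the $\ord(I_{\Rel,\gamma})$-indexed reindexing of) some $\sigma \in \X_{\gamma - 1}$ with $\sigma(\alpha) = 1$, then the construction in Subcase F.3 selects such a $\sigma$ and sets $B_\alpha^\gamma = B_\alpha^{\gamma-1} \cup \{\gamma - 1\}$, so $\gamma - 1 \in B_\alpha$.

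So the main step is a counting argument. I would argue that there are uncountably many $\gamma \in R$ of the right form for which $f(\gamma)$ ``names,'' via the embedding $\iota$, an initial segment of some coordinate sequence in $\X_{\gamma-1}$ with value $1$ on the coordinate corresponding to $\alpha$. Concretely: fix one admissible large $\gamma_0$ and the associated $\iota_0$ and a witness $\sigma \in \X_{\gamma_0 - 1}$ with $\sigma(\alpha)=1$; let $g = \sigma \circ \iota_0 \in \{0,1\}^{\dom(f(\gamma_0))}$ restricted to a domain large enough to pin down the coordinate for $\alpha$ (here I must be careful that $\alpha$'s position in the well-order of $I_{\Rel,\gamma-1}$ is below $\dom(f(\gamma))$, which I arrange by choosing $f(\gamma)$ with sufficiently long domain — possible since $f$ takes all values in $\bigcup_{\delta<\aleph_1}\{0,1\}^\delta$, each with uncountable preimage). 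Since $g$ has uncountable preimage under $f$, there are uncountably many $\gamma$ with $f(\gamma) = g$; for each such $\gamma$ that is large enough, $I_{\Rel,\gamma-1} \supseteq I_{\Rel, \gamma_0-1}$ and the relevant initial segment of the well-order of $I_{\Rel, \gamma-1}$ agrees with that of $I_{\Rel, \gamma_0 - 1}$ on the first $\dom(g)$ elements (since $I_{\Rel,\beta_0} = I_{\Rel,\beta}\cap\beta_0$ respects the order), so the same $\sigma$ (extended via Lemma \ref{extension}) witnesses the first horn of Subcase F.3, forcing $\gamma - 1 \in B_\alpha$. As these $\gamma - 1$ are distinct, $B_\alpha$ is uncountable.

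**Main obstacle.** The delicate point is the bookkeeping matching $f(\gamma)$ (a function on an \emph{ordinal} $\dom(f(\gamma))$) with the actual indexing set $I_{\Rel,\gamma-1}$ via the order isomorphism $\ord(\cdot)$, and checking that this matching is \emph{stable} as $\gamma$ varies over the uncountably many ordinals with $f(\gamma) = g$ — i.e. that the position of $\alpha$ in the well-order of $I_{\Rel,\gamma-1}$ does not drift. This is handled by the coherence condition $I_{\Rel,\beta_0} = I_{\Rel,\beta} \cap \beta_0$ (condition (ii)), which makes the order embeddings compatible, so the first $\dom(g)$ elements of $\ord(I_{\Rel,\gamma-1})$ are the same for all sufficiently large such $\gamma$; and by using Lemma \ref{extension} to propagate a single witness $\sigma$ upward. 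Once this alignment is in place, the rest is immediate.
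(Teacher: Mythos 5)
Your proof is correct, but it takes a genuinely different route from the paper's. The paper argues via Case E: for every $\beta_0$ of the form $t + \omega(6k+1)$ with $\alpha \in I_{\Rel, \beta_0}$, the interval $[\beta_0, \beta_0+\omega)$ carries a Boolean saturated model of $\Gamma_{\beta_0}$, which in particular satisfies $\B_\alpha \neq \emptyset$; since there are uncountably many such pairwise disjoint intervals, $B_\alpha$ is uncountable in one line. You instead exploit Subcase F.3 and the function $f$, showing that uncountably many stages $\gamma$ with $f(\gamma) = g$ (for a suitable $g$ coding $\sigma(\alpha)=1$ on an initial segment) each contribute a new point $\gamma - 1$ to $B_\alpha$. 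This is essentially an uncountable strengthening of the paper's Lemma \ref{thickness} (which the paper proves for properties (8) and (9) by exactly the mechanism you describe, but only extracts a single witness from). Your bookkeeping concern is resolved correctly: since $\alpha \leq h(\alpha) \leq \alpha + 1$, every element of $I_{\Rel, \gamma-1} \setminus I_{\Rel, \gamma_0 - 1}$ is $\geq \gamma_0 - 1$ and hence exceeds every element of $I_{\Rel, \gamma_0-1}$, so $I_{\Rel,\gamma-1}$ end-extends $I_{\Rel,\gamma_0-1}$ and the position of $\alpha$ in the well-order is stable; Lemma \ref{extension} then supplies the witness $\sigma'$ needed to land in the first horn of Subcase F.3. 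The trade-off: the paper's argument is shorter and needs no order-type alignment, while yours yields the stronger fact that every nonempty $\X_{\beta, Y, Z}$-type condition is realized by uncountably many points of the space, not just that each $B_\alpha$ is hit once per interval.
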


\begin{proof}  It suffices to prove that each basis element is uncountable.  Given $\alpha \in I_{\Rel}$ we notice that for each $\beta_0 < \aleph_1$ of form $\beta_0 = t + \omega(6k + 1)$ large enough that $\alpha \in I_{\Rel, \beta_0}$ we have $B_{\alpha}^{\beta_0 + \omega} \cap [\beta_0, \beta_0 + \omega) \neq \emptyset$ since $$([\beta_0, \beta_0 + \omega) , \{[\beta_0, \beta_0 + \omega) \cap B_{\alpha}^{\beta_0 + \omega}\}_{\alpha \in I_{\Rel, \beta_0 + \omega}})$$ is a (Boolean saturated) model of the set of sentences $\Gamma_{\beta_0}$ which includes a sentence requiring $\B_{\alpha} \neq \emptyset$.  That $B_{\alpha}$ is uncountable follows immediately.
\end{proof}

\begin{lemma}  \label{uncountablebasisindeed}(Property (7)) $|\Bo| = \aleph_1$.
\end{lemma}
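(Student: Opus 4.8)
The bound $|\Bo|\le\aleph_1$ is immediate, since $\Bo$ is indexed by $I_{\Rel}\subseteq\aleph_1$; the content is to produce $\aleph_1$ pairwise distinct members of $\Bo$. The plan is to recover, from each basis element $B_\alpha$, the location of $\alpha$ among the Case E intervals of the construction. Call $[\beta_0,\beta_0+\omega)$ a \emph{Case E interval} if $\beta_0=t+\omega(6k+1)$ for some $t\in T^*$ and $k\in\omega$; there are $\aleph_1$ of these, and none of them contains a relation index, because relations are introduced only in Cases A--D, whose indices occupy $\omega$-slots congruent to $2,3,4,5$ modulo $6$. In particular, if $\beta_0$ is the left endpoint of a Case E interval and $\beta_0<\alpha\in I_{\Rel}$, then in fact $\beta_0+\omega\le\alpha$, so the whole interval lies strictly below $\alpha$.

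The key claim is: for every $\alpha\in I_{\Rel}$ and every Case E interval $[\beta_0,\beta_0+\omega)$, the set $B_\alpha\cap[\beta_0,\beta_0+\omega)$ is \emph{finite} if $\beta_0<\alpha$ and \emph{infinite} if $\beta_0>\alpha$ (we have noted $\beta_0\ne\alpha$). For ``$\beta_0<\alpha$'': the interval is processed entirely before the stage $\alpha$ at which $B_\alpha$ is first defined, so $B_\alpha\cap[\beta_0,\beta_0+\omega)$ equals the trace on the interval of the value $B_\alpha^{\alpha}$; inspecting Cases A--D one sees that value has the shape $J\cup F$ or $J\cup F\cup\{s_{x,i,q}\}$ with $J\subseteq\omega$, $F$ finite, and $\{s_{x,i,q}\}$ a strictly increasing sequence converging to a point of $I_{\Conn}\subseteq T$ (hence lying in no Case E interval), so this trace is finite; and at every later stage $B_\alpha$ is enlarged only by ordinals $\ge\alpha\ge\beta_0+\omega$, so the trace never grows. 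For ``$\beta_0>\alpha$'': then $h(\alpha)<\beta_0$, so $\alpha\in I_{\Rel,\beta_0}$, and arguing as in the proof of Lemma~\ref{bigopensindeed} but using that the pair $(\emptyset,\{\alpha\})$ occurs infinitely often in the Case E enumeration $\{(Y_m,Z_m)\}_{m\in\omega}$ on $[\beta_0,\beta_0+\omega)$ (legitimately, since $\X_{\beta_0,\emptyset,\{\alpha\}}=b_\alpha^{\beta_0}\ne\emptyset$), we get $\sigma_m(\alpha)=1$ and hence $\beta_0+m\in B_\alpha$ for infinitely many $m$.

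Granting the claim, set $E(\alpha)=\{\beta_0\mid [\beta_0,\beta_0+\omega)\text{ a Case E interval and }B_\alpha\cap[\beta_0,\beta_0+\omega)\text{ infinite}\}$. This is determined by the set $B_\alpha$, and by the claim $E(\alpha)=\{\beta_0\mid[\beta_0,\beta_0+\omega)\text{ a Case E interval},\ \beta_0>\alpha\}$. For a Case E interval with left endpoint $\beta_0$ the ordinal $\alpha_{\beta_0}:=\beta_0+\omega$ is a relation index — it begins the Case A block immediately following $[\beta_0,\beta_0+\omega)$, which introduces $\B_{\beta_0+\omega}$ — and if $\beta_0<\beta_0'$ are left endpoints of two Case E intervals then $\beta_0'\in E(\alpha_{\beta_0})$ while $\beta_0'\notin E(\alpha_{\beta_0'})$, so $B_{\alpha_{\beta_0}}\ne B_{\alpha_{\beta_0'}}$. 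Hence $\beta_0\mapsto B_{\beta_0+\omega}$ is injective on the $\aleph_1$-sized set of left endpoints of Case E intervals, giving $|\Bo|\ge\aleph_1$ and therefore $|\Bo|=\aleph_1$.

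The main obstacle is the case analysis supporting the key claim: one must verify uniformly over Cases A, B, C, D and their subcases (including those that throw in a convergent sequence $\{s_{x,i,q}\}$) both that a freshly defined basis element meets each Case E interval below it in a finite — or finite-plus-convergent-tail — set, and that basis elements only ever grow by ordinals at or above their birth stage. These checks are routine but fiddly; the remaining ingredient, the elementary mod-$6$ observation placing relation indices outside Case E intervals, and the final step, which is bookkeeping about the linear order of the Case E intervals, are both short.
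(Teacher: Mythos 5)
Your overall strategy is different from the paper's and the final bookkeeping step would work, but the key claim on which you rest it is false as stated, and the error sits exactly in the ``routine but fiddly'' case inspection you deferred. You assert that every freshly defined basis element has the shape $J\cup F$ or $J\cup F\cup\{s_{x,i,q}\}$ with $J\subseteq\omega$ and $F$ finite. That is true for the elements introduced in Cases A and B, but not for the second element introduced in Cases C and D: in Subcase C.1, for instance, the construction sets $B_{\beta_0+m+1}^{\beta_0+m+1}=J_{\beta_0+m+1}\cup\bigl((\beta_0+m)\setminus(\omega\cup\{x_m\})\bigr)$, which is forced by the clause $(\forall x)[x\in\B_{\beta_0+m}\cup\B_{\beta_0+m+1}\cup\B_{\alpha_m}]$ in a type (c) sentence (and similarly in C.2, D.1, D.2). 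Such an element contains every earlier Case E interval up to finitely many points, so for $\alpha=\beta_0+m+1\in I_{\Rel}$ and any Case E interval $[\beta_0^E,\beta_0^E+\omega)$ with $\beta_0^E<\alpha$ the trace $B_\alpha\cap[\beta_0^E,\beta_0^E+\omega)$ is cofinite in the interval, not finite. Hence your identity $E(\alpha)=\{\beta_0\mid\beta_0>\alpha\}$ fails for uncountably many $\alpha$.

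The damage is repairable: your concluding step only invokes the ``finite'' half of the claim for the particular elements $\alpha_{\beta_0}=\beta_0+\omega$, which are introduced in Case A and do have the form $J\cup\{x_0\}$ or $J\cup\{x_0\}\cup\{s_{x_0,i,q}\}_{q\geq N}$; for these the finite-trace computation goes through (one must also note that a strictly increasing sequence with supremum in $T$ meets each Case E interval finitely, since $T$ avoids $[\beta_0^E,\beta_0^E+\omega]$), and the ``infinite'' half is correct for all $\alpha$. So restricting the key claim to the family $\{B_{\beta_0+\omega}\}$ salvages the argument. You should be aware, though, that the paper disposes of this lemma in three lines by a completely different route: if $\Bo$ were countable the space would be Lindel\"of, hence compact by property (8) (Lemma \ref{nicecoverindeed}), contradicting Proposition \ref{badprop} in view of properties (1), (3), (4), (5). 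That argument avoids any inspection of the construction.
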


\begin{proof}  That $|\Bo| \leq \aleph_1$ is obvious since the elements are indexed by $I_{\Rel} \subseteq \aleph_1$.  Instead of directly verifying that $\Bo$ is uncountable, we will use the forthcoming property (8).  Were $\Bo$ countable, $(\aleph_1, \tau)$ would be Lindel\"of, and by (8) we would have $(\aleph_1, \tau)$ compact, which together with (1), (3), (4) and (5) gives a contradiction (see the proof of Proposition \ref{badprop}).
\end{proof}

Towards properties (8) and (9) we give the following.

\begin{lemma} \label{thickness}  For each $\beta < \aleph_1$ and $\sigma \in \X_{\beta}$ there exists $\beta_0 > \beta$ and $x \in \beta_0$ such that $\Coor_{\beta_0}(x) \upharpoonright I_{\Rel, \beta} = \sigma$.
\end{lemma}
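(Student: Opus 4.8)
The plan is to invoke Subcase F.3 of the construction, which exists precisely to realize prescribed coordinate types at freshly added points via the bookkeeping function $f\colon R\to\bigcup_{\delta<\aleph_1}\{0,1\}^{\delta}$. The first preliminary observation I would make is that $I_{\Rel,\beta}$ is an initial segment of $(I_{\Rel},\leq)$: if $\alpha\in I_{\Rel,\beta}$ and $\alpha'\in I_{\Rel}$ with $\alpha'<\alpha$, then $h(\alpha')\leq\alpha'+1\leq\alpha\leq h(\alpha)<\beta$, so $\alpha'\in I_{\Rel,\beta}$. Hence for every $\gamma\geq\beta$ the set $I_{\Rel,\beta}$ is an initial segment of $I_{\Rel,\gamma}$, so $\ord(I_{\Rel,\beta})\leq\ord(I_{\Rel,\gamma})$ and the order isomorphism $\ord(I_{\Rel,\gamma})\to I_{\Rel,\gamma}$ restricts, on the initial interval of order type $\ord(I_{\Rel,\beta})$, to the order isomorphism $\ord(I_{\Rel,\beta})\to I_{\Rel,\beta}$.

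Next, write $\delta=\ord(I_{\Rel,\beta})$ (a countable ordinal, since $I_{\Rel,\beta}\subseteq\beta<\aleph_1$) and let $e\colon\delta\to I_{\Rel,\beta}$ be the order isomorphism. Define $g\in\{0,1\}^{\delta}$ by $g(\epsilon)=\sigma(e(\epsilon))$. Since each element of the codomain of $f$ has uncountable, hence unbounded, preimage, there is $\gamma\in R$ with $\gamma>\beta+1$ and $f(\gamma)=g$; such a $\gamma$ is treated in Subcase F.3, where $I_{\Rel,\gamma}=I_{\Rel,\gamma-1}$, $\Gamma_{\gamma}=\Gamma_{\gamma-1}$, and $\X_{\gamma}=\X_{\gamma-1}$. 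I would then verify that $\gamma$ lands in the favorable branch of F.3: first $\dom(f(\gamma))=\delta\leq\ord(I_{\Rel,\gamma})$ by the initial-segment remark, and the order embedding $\iota$ appearing in F.3, composed with the order isomorphism $\ord(I_{\Rel,\gamma})\to I_{\Rel,\gamma}$, coincides with $e$; second, a witness $\sigma\in\X_{\gamma-1}$ of the condition "$\sigma(\iota(\epsilon))=(f(\gamma))(\epsilon)$ for all $\epsilon$" exists, because by inductive property (iii) $\Bo^{\gamma-1}\models\Gamma_{\gamma-1}$ so $\Gamma_{\gamma-1}$ is consistent, whence Lemma \ref{extension} (applied with $\beta_0=\beta$ and $\gamma-1$ in the role of $\beta$) yields $\rho\in\X_{\gamma-1}$ with $\rho\upharpoonright I_{\Rel,\beta}=\sigma$, and $\rho(\iota(\epsilon))=\rho(e(\epsilon))=\sigma(e(\epsilon))=g(\epsilon)=(f(\gamma))(\epsilon)$.

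Consequently the construction at stage $\gamma$ selects some $\bar\sigma\in\X_{\gamma-1}$ with $\bar\sigma(\iota(\epsilon))=(f(\gamma))(\epsilon)$ for all $\epsilon\in\delta$; since $\iota$ together with the order isomorphism identifies $\delta$ with $I_{\Rel,\beta}$ via $e$, any such $\bar\sigma$ satisfies $\bar\sigma\upharpoonright I_{\Rel,\beta}=\sigma$. The point $\gamma-1$ is put into exactly those $B_{\alpha}^{\gamma}$ with $\bar\sigma(\alpha)=1$, and by Lemma \ref{addingpoints} (using Lemma \ref{nicemodel} to supply a model $(\X_{\gamma-1},\{b_{\alpha}^{\gamma-1}\})$ of $\Gamma_{\gamma-1}$ realizing the type $\bar\sigma$, while old points keep types realized in $\Bo^{\gamma-1}\models\Gamma_{\gamma-1}$) the structure $(\gamma,\Bo^{\gamma})$ models $\Gamma_{\gamma}$; thus $\Coor_{\gamma}$ is defined and $\Coor_{\gamma}(\gamma-1)(\alpha)=1\iff\gamma-1\in B_{\alpha}^{\gamma}\iff\bar\sigma(\alpha)=1$, i.e. $\Coor_{\gamma}(\gamma-1)=\bar\sigma$. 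Taking $\beta_0=\gamma$ and $x=\gamma-1\in\beta_0$ gives $\Coor_{\beta_0}(x)\upharpoonright I_{\Rel,\beta}=\bar\sigma\upharpoonright I_{\Rel,\beta}=\sigma$, which is the claim. The only mildly delicate point is the bookkeeping identification of the order embedding $\iota$ from F.3 with the inclusion $I_{\Rel,\beta}\hookrightarrow I_{\Rel,\gamma}$; everything else is a direct application of Lemmas \ref{extension}, \ref{nicemodel}, \ref{addingpoints} and of inductive property (iii).
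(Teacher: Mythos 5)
Your proposal is correct and follows the same route as the paper's own proof: encode $\sigma$ as $g\in\{0,1\}^{\delta}$ via the order isomorphism $\delta\cong I_{\Rel,\beta}$, pick $\gamma\in R$ above $\beta$ with $f(\gamma)=g$, and read off the coordinates of the new point $\gamma-1$ from Subcase F.3. The paper's version is a four-line sketch; you have merely filled in the details it leaves implicit (that $I_{\Rel,\beta}$ is an initial segment so $\iota$ matches the inclusion, and that the favorable branch of F.3 is actually taken, via Lemma \ref{extension} and property (iii)), and you also silently correct the paper's typo $\sigma(\iota^{-1}(\epsilon))$ to $\sigma(\iota(\epsilon))$.
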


\begin{proof}  Let $\delta = \ord(I_{\Rel, \beta})$ and $\iota: \delta \rightarrow I_{\Rel, \beta}$ be the unique order isomorphism.  Let $g \in \{0, 1\}^{\delta}$ be given by $g(\epsilon) = \sigma(\iota^{-1}(\epsilon))$.  Select $\beta_0 \in R$ such that $\beta_0 > \beta$ and $f(\beta_0) = g$.  By Subcase F.3 we have $\Coor_{\beta_0}(\beta_0 - 1) \upharpoonright I_{\Rel, \beta} = \sigma$.
\end{proof}

\begin{lemma}\label{nicecoverindeed}(Property (8))  Every countable cover of $\aleph_1$ by elements of $\Bo$ has a finite subcover.
\end{lemma}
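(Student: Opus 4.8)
The plan is to transfer the problem to the compact space $\X_{\beta}$ for an appropriately chosen $\beta$. Suppose $\{B_{\alpha_n}\}_{n\in\omega}$ is a countable cover of $\aleph_1$ by basis elements, so each $\alpha_n\in I_{\Rel}$. Since $\aleph_1$ has uncountable cofinality there is a $\beta<\aleph_1$ with $h(\alpha_n)<\beta$, hence $\alpha_n\in I_{\Rel,\beta}$, for every $n$. The key claim will then be that $\{b_{\alpha_n}^{\beta}\}_{n\in\omega}$ covers $\X_{\beta}$: if some $\sigma\in\X_{\beta}$ satisfied $\sigma(\alpha_n)=0$ for all $n$, then Lemma \ref{thickness} would produce $\beta_0>\beta$ and a point $x\in\beta_0$ with $\Coor_{\beta_0}(x)\upharpoonright I_{\Rel,\beta}=\sigma$; since $B_{\alpha_n}^{\beta_0}=\beta_0\cap B_{\alpha_n}$ and $\Coor_{\beta_0}(x)(\alpha_n)=\sigma(\alpha_n)=0$, this $x$ would lie outside every $B_{\alpha_n}$, contradicting that we began with a cover. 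I expect this transfer step to be the main point of the argument, and it is precisely the place where the ``thickness'' supplied by Lemma \ref{thickness} is essential.

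Once the claim is established, I would apply compactness. By Lemma \ref{compactsubspace} the set $\X_{\beta}$ is compact in the subspace topology inherited from $\{0,1\}^{I_{\Rel,\beta}}$, and each $b_{\alpha_n}^{\beta}=\X_{\beta,\emptyset,\{\alpha_n\}}$ is relatively clopen there, so the cover $\{b_{\alpha_n}^{\beta}\}_{n\in\omega}$ admits a finite subcover $\X_{\beta}=b_{\alpha_{n_0}}^{\beta}\cup\cdots\cup b_{\alpha_{n_k}}^{\beta}$.

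Finally I would check that $\{B_{\alpha_{n_0}},\dots,B_{\alpha_{n_k}}\}$ covers $\aleph_1$. Given $x\in\aleph_1$, choose $\beta'<\aleph_1$ with $\beta<\beta'$ and $x\in\beta'$; by induction hypothesis (iii) the pair $(\beta',\Bo^{\beta'})$ models $\Gamma_{\beta'}$, so $\Coor_{\beta'}(x)\in\X_{\beta'}$, and by Lemma \ref{cutoff} its restriction $\sigma:=\Coor_{\beta'}(x)\upharpoonright I_{\Rel,\beta}$ lies in $\X_{\beta}$. Hence $\sigma\in b_{\alpha_{n_i}}^{\beta}$ for some $i\le k$, i.e. $\Coor_{\beta'}(x)(\alpha_{n_i})=1$, so $x\in B_{\alpha_{n_i}}^{\beta'}\subseteq B_{\alpha_{n_i}}$. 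This yields the desired finite subcover; everything beyond the covering claim of the first paragraph is routine bookkeeping with the nesting conditions (ii).
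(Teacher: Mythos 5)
Your proposal is correct and follows essentially the same route as the paper: choose $\beta$ capturing all the indices, use Lemma \ref{thickness} to show $\{b_{\alpha_n}^{\beta}\}_{n\in\omega}$ covers $\X_{\beta}$, extract a finite subcover by compactness, and pull it back via the coordinate maps. The only cosmetic difference is that you invoke compactness of $\X_{\beta}$ in the product topology (Lemma \ref{compactsubspace}) where the paper cites Lemma \ref{compactconnected}; both suffice since each $b_{\alpha_n}^{\beta}$ is open in either topology.
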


\begin{proof}  Let $\{\alpha_n\}_{n\in \omega} \subseteq I_{\Rel}$ be such that $\bigcup_{n\in \omega} B_{\alpha_n} = \aleph_1$.  Select $\beta < \aleph_1$ large enough that $\alpha_n \in I_{\Rel, \beta}$ for all $n \in \omega$.  We claim that $\{b_{\alpha_n}^{\beta}\}_{n\in \omega}$ is a cover of $\X_{\beta}$.  To see this, we suppose otherwise and let $\sigma \in \X_{\beta} \setminus \bigcup_{n\in \omega} b_{\alpha_n}^{\beta}$.  By Lemma \ref{thickness} we select $\beta_0 > \beta$ and $x\in \beta_0$ such that $\Coor_{\beta_0}(x) \upharpoonright I_{\Rel, \beta} = \sigma$.  Clearly $x\in \beta_0 \setminus \bigcup_{n\in \omega} B_{\alpha_n}^{\beta_0}$, and so $x\in \aleph_1 \setminus \bigcup_{n\in \omega} B_{\alpha_n}$, a contradiction.

Thus $\{b_{\alpha_n}^{\beta}\}_{n\in \omega}$ is a cover of $\X_{\beta}$, and as $(\X_{\beta}, \tau(\{b_{\alpha}^{\beta}\}_{\alpha \in I_{\Rel, \beta}}))$ is compact (by Lemma \ref{compactconnected}) we may select a finite subcover $\{b_{\alpha_{k_0}}^{\beta}, \ldots, b_{\alpha_{k_n}}^{\beta}\}$.  Given arbitrary $x \in \aleph_1$ we select $\beta_0 \geq \beta, x$ and notice that $\Coor_{\beta_0}(x) \upharpoonright I_{\Rel, \beta} \in \bigcup_{i = 0}^n b_{\alpha_{k_i}}^{\beta}$.  Thus $\{B_{\alpha_{k_0}}, \ldots, B_{\alpha_{k_n}}\}$ is a subcover.
\end{proof}

\begin{lemma}\label{nonemptyintersectionindeed}(Property (9))  If $\{B_n\}_{n\in \omega}$ is a nesting decreasing sequence of elements of $\Bo$ then $\bigcap_{n\in \omega} B_n \neq \emptyset$, and so $(\aleph_1, \tau)$ is strongly Choquet.
\end{lemma}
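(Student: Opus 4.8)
The plan is to transfer the question to the compact skeleton $\X_{\beta}$ at a sufficiently large \emph{countable} stage $\beta$. Write $B_n = B_{\alpha_n}$ with $\alpha_n \in I_{\Rel}$. Since every relation symbol $\B_{\alpha}$ is introduced in a sentence of $\Gamma$ that asserts $\B_{\alpha}\neq\emptyset$ and $(\aleph_1,\Bo)\models\Gamma$ (Remark~\ref{modelindeed}), each $B_{\alpha_n}$ is nonempty, and by hypothesis $B_{\alpha_{n+1}}\subseteq B_{\alpha_n}$. First I would choose $\beta<\aleph_1$ large enough that $\alpha_n \in I_{\Rel,\beta}$ for all $n\in\omega$, and consider the sets $b_{\alpha_n}^{\beta} = \X_{\beta,\emptyset,\{\alpha_n\}} = \{\sigma\in\X_{\beta}\mid \sigma(\alpha_n)=1\}$.

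The crux is a two-way reflection between global membership in the $B_{\alpha}$ and the skeleton $\X_{\beta}$, supplied by Lemma~\ref{thickness}. Given $\sigma\in\X_{\beta}$, Lemma~\ref{thickness} produces $\beta_0>\beta$ and $x\in\beta_0$ with $\Coor_{\beta_0}(x)\upharpoonright I_{\Rel,\beta}=\sigma$; since $B_{\alpha}^{\beta_0}=B_{\alpha}\cap\beta_0$ and $x<\beta_0$, this says precisely that $x\in B_{\alpha}\Leftrightarrow\sigma(\alpha)=1$ for all $\alpha\in I_{\Rel,\beta}$. Applying this to $\sigma\in b_{\alpha_{n+1}}^{\beta}$ yields a point $x\in B_{\alpha_{n+1}}\subseteq B_{\alpha_n}$, hence $\sigma(\alpha_n)=1$; thus $b_{\alpha_{n+1}}^{\beta}\subseteq b_{\alpha_n}^{\beta}$, so $\{b_{\alpha_n}^{\beta}\}_{n\in\omega}$ is a decreasing sequence. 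Each $b_{\alpha_n}^{\beta}$ is nonempty (as $\Gamma_{\beta}$ is consistent, $(\X_{\beta},\{b_{\alpha}^{\beta}\})\models\B_{\alpha_n}\neq\emptyset$ by Lemma~\ref{nicemodel}) and is clopen in $\X_{\beta}$ viewed as a subspace of $\{0,1\}^{I_{\Rel,\beta}}$ with the product topology. Since $\X_{\beta}$ is compact in that topology (Lemma~\ref{compactsubspace}) and $\bigcap_{n\le N}b_{\alpha_n}^{\beta}=b_{\alpha_N}^{\beta}\neq\emptyset$ for every $N$, the finite intersection property gives $\bigcap_{n\in\omega}b_{\alpha_n}^{\beta}\neq\emptyset$.

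Finally, pick $\sigma\in\bigcap_{n\in\omega}b_{\alpha_n}^{\beta}$ and invoke the other direction of the reflection: Lemma~\ref{thickness} gives $\beta_0>\beta$ and $x\in\beta_0$ with $\Coor_{\beta_0}(x)\upharpoonright I_{\Rel,\beta}=\sigma$, so $x\in B_{\alpha_n}$ for all $n$, i.e. $x\in\bigcap_{n\in\omega}B_n$. The ``strongly Choquet'' clause is then immediate from Lemma~\ref{stronglyChoquetfrombasis}. The only delicate point is the faithful reflection between the level-$\aleph_1$ membership relations and the countable skeleton $\X_{\beta}$ — in particular choosing $\beta$ \emph{before} forming the $b_{\alpha_n}^{\beta}$ and applying Lemma~\ref{thickness} in both directions; beyond this the argument is a routine appeal to compactness and I anticipate no genuine obstacle.
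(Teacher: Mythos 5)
Your proof is correct and follows essentially the same route as the paper's: pass to a countable stage $\beta$ with all $\alpha_n\in I_{\Rel,\beta}$, show the traces $b_{\alpha_n}^{\beta}$ are nested nonempty closed subsets of the compact $\X_{\beta}$ (Lemma \ref{compactsubspace}), extract $\sigma$ from their intersection, and realize $\sigma$ by a genuine point via Lemma \ref{thickness}. The only divergence is how the nesting $b_{\alpha_{n+1}}^{\beta}\subseteq b_{\alpha_n}^{\beta}$ is certified: the paper takes $\beta$ of the special form $t+\omega(6k+1)$ so that Case E supplies a Boolean saturated model on $[\beta,\beta+\omega)$ and a witness there, whereas you reuse Lemma \ref{thickness} to reflect an arbitrary $\sigma\in b_{\alpha_{n+1}}^{\beta}$ to an actual point of $\aleph_1$; both are valid, and your variant is marginally more economical since it needs no constraint on the form of $\beta$ and invokes only one transfer device.
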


\begin{proof}  That $(\aleph_1, \tau)$ is strongly Choquet will follow immediately from the first claim (see Lemma \ref{stronglyChoquetfrombasis}).  Supposing that $\{\alpha_n\}_{n\in \omega}$ is a sequence in $I_{\Rel}$ such that $B_{\alpha_0} \supseteq B_{\alpha_1} \supseteq \cdots$ we let $\beta_0 < \aleph_1$ be large enough that $\alpha_n \in I_{\Rel, \beta_0}$ for all $n\in \omega$ and of form $\beta_0 = t + \omega(6k + 1)$.  By Case E we have $([\beta_0, \beta_0 + \omega), \{[\beta_0, \beta_0 + \omega) \cap B_{\alpha}^{\beta_0 + \omega}\}_{\alpha \in I_{\Rel, \beta_0 + \omega}})$ is a Boolean saturated model of $\Gamma_{\beta_0 + \omega}$.

Notice that $b_{\alpha_0}^{\beta_0 + \omega} \supseteq b_{\alpha_1}^{\beta_0 + \omega} \supseteq \cdots$, for if there existed $\sigma \in b_{\alpha_{n + 1}}^{\beta_0 + \omega} \setminus b_{\alpha_n}^{\beta_0 + \omega}$ we would have, by Boolean saturation, some $x\in [\beta_0, \beta_0 + \omega)$ with $x \in B_{\alpha_{n+1}}^{\beta_0 + \omega} \setminus B_{\alpha_n}^{\beta_0 + \omega}$, so $x \in B_{\alpha_{n+1}} \setminus B_{\alpha_n}$, contrary to assumption.

By Lemma \ref{compactsubspace} we know that $\X_{\beta_0 + \omega}$ is compact as a subset of $\{0, 1\}^{I_{\Rel, \beta _0+ \omega}}$.  Each $b_{\alpha_n}^{\beta_0 + \omega}$ is the intersection of $\X_{\beta_0 + \omega}$ with a closed subset of $\{0, 1\}^{\beta_0 + \omega}$.  As $(\X_{\beta_0 + \omega}, \{b_{\alpha}^{\beta_0 + \omega}\}_{\alpha \in I_{\Rel, \beta_0 + \omega}}) \models \Gamma_{\beta_0 + \omega}$ (Lemma \ref{nicemodel}), we know more particularly that $b_{\alpha_n}^{\beta_0 + \omega} \neq \emptyset$.  Thus $\bigcap_{n \in \omega} b_{\alpha_n}^{\beta_0 + \omega} \neq \emptyset$ by compactness.  Selecting $\sigma \in \bigcap_{n \in \omega} b_{\alpha_n}^{\beta_0 + \omega}$, we pick by Lemma \ref{thickness} an element $\beta_1 > \beta_0 + \omega$ and $x \in \beta_1$ such that $\Coor_{\beta_1}(x) \upharpoonright I_{\Rel, \beta_0 + \omega} = \sigma$.  Clearly $x \in \bigcap_{n \in \omega} B_{\alpha_n}^{\beta_1} \subseteq  \bigcap_{n \in \omega} B_{\alpha_n}$, so we are done.

\end{proof}

\end{section}

\section*{Acknowledgement}

The author is enormously grateful to Jan van Mill and Adam Ostaszewski for feedback on a draft of this article and also to David Lipham for sharing his knowledge of the literature on widely connected spaces.

\end{document}